\documentclass[11pt]{article}
\pdfoutput=1

\usepackage[dvipsnames]{xcolor}
\def\blue#1{\textcolor{blue}{#1}}
\def\blue#1{\textcolor{black}{#1}}

\def\beq{\begin{equation} }\def\eeq{\end{equation} }\def\ep{\varepsilon}\def\1{\mathbf{1}}

\usepackage[framemethod=default]{mdframed}
\usepackage{caption}

\usepackage{indentfirst}
\usepackage{bm, mathrsfs, graphics,float,amssymb,amsmath,subeqnarray,setspace,graphicx,amsthm,epstopdf,subfigure, enumerate, color}
\usepackage[utf8]{inputenc}
\usepackage[colorlinks,
            linkcolor=red,
            anchorcolor=blue,
            citecolor=blue
            ]{hyperref}
\usepackage{natbib}

\usepackage{fullpage}
%

\parindent 15pt

\numberwithin{equation}{section}

\newtheorem{lemma}{Lemma}
\newtheorem{theorem}{Theorem}
\newtheorem{proposition}{Proposition}
\newtheorem{definition}{Definition}
\newtheorem{corollary}[theorem]{Corollary}
\newtheorem{remark}{Remark}

\ifx\assumption\undefined
\newtheorem{assumption}{Assumption}
\fi

\newcommand{\bT}{\mathrm{T}}

\newcommand{\cO}{\mathcal{O}}
\newcommand{\cH}{\bm{\mathcal{H}}}
\newcommand{\cV}{\mathcal{V}}
\newcommand{\EE}{\mathbb{E}}
\newcommand{\RR}{\mathbb{R}}

\newcommand{\Ib}{\mathbf{I}}

\newcommand{\cF}{{\bm{\mathcal{F}}}}

\newcommand{\PP}{\mathbb{P}}



\newcommand{\minimize}{\mathop{\mathrm{minimize}}}

\usepackage{bbm}
\newcommand{\cI}{\mathcal{I}}
\newcommand{\bone}{\mathbbm{1}}
\newcommand{\tC}{\tilde{C}}

\def\cK{\mathcal{K}}
\newcommand{\cep}{\tilde{\epsilon}}

\newcommand{\tH}{\tilde{\bm{\mathcal{H}}}}

\newcommand{\sK}{\mathscr{K}}

\usepackage{multirow}
\usepackage{tablefootnote}
\usepackage{colortbl}
\usepackage{hhline}

\usepackage{algorithm}
\usepackage{algorithmic}

\def\FSP{first-order stationary point}
\def\SSP{second-order stationary point}
\def\ep{\epsilon}


\newcommand{\ww}{\mathbf{w}}

\newcommand{\vv}{\mathbf{v}}
\newcommand{\x}{\mathbf{x}}
\newcommand{\hx}{\hat{\x}}

\newcommand{\y}{\mathbf{y}}

\renewcommand{\b}{\mathbf{b}}

\newcommand{\Z}{\mathbf{Z}}
\newcommand{\A}{\mathbf{A}}
\newcommand{\cA}{\mathcal{A}}

\newcommand{\bxi}{\bm{\xi}}

\newcommand{\B}{\mathbf{B}}

\newcommand{\UU}{\mathbf{U}}

\newcommand{\uu}{\mathbf{u}}
\newcommand{\E}{\mathbb{E}}

\newcommand{\tx}{\tilde{\x}}

\newcommand{\hf}{\hat{f}}
\newcommand{\cP}{\bm{\mathcal{P}}}

\renewcommand{\a}{\mathbf{a}}

\newcommand{\e}{\mathbf{e}}

\renewcommand{\u}{\mathbf{u}}
\newcommand{\tf}{\tilde{f}}
\newcommand{\C}{\mathbf{C}}
\newcommand{\D}{\mathbf{D}}

\newcommand{\I}{\mathbf{I}}
\newcommand{\<}{\left\langle}
\renewcommand{\>}{\right\rangle}

\usepackage{mathtools}

\newcommand{\hB}{\mathcal{B}}
\newcommand{\hV}{\mathcal{V}}

\newcommand{\bcV}{\bm{\mathcal{ V}}}
\newcommand{\bcU}{\bm{\mathcal{U}}}
\newcommand{\bcG}{\bm{\mathcal{G}}}

\def\Ei{\EE}
\newcommand{\cS}{{\mathcal{S}}}
\def\NEON{\textsc{Neon}}
\def\SPIDER{\textsc{Spider}}

\begin{document}
\title{
\SPIDER:
Near-Optimal Non-Convex Optimization via Stochastic Path Integrated Differential Estimator
}

\author{
Cong Fang
\thanks{
Peking University;
email: fangcong@pku.edu.cn; zlin@pku.edu.cn
}
\thanks{This work was done while  Cong Fang was a Research Intern with Tencent AI Lab.}
\qquad
Chris Junchi Li
\thanks{
Tencent AI Lab;
email: junchi.li.duke@gmail.com; tongzhang@tongzhang-ml.org
}
\qquad
Zhouchen Lin
\footnotemark[1]
\qquad
Tong Zhang
\footnotemark[3]
}
\date{July 4, 2018~ (Initial)\\ \today~ (Current)}

\maketitle

\begin{abstract}
In this paper, we propose a new technique named \textit{Stochastic Path-Integrated Differential EstimatoR} (\SPIDER), which can be used to track many deterministic quantities of interest with significantly reduced computational cost. 
We apply \SPIDER\ to two tasks, namely the stochastic first-order and zeroth-order methods.
For stochastic first-order method, combining \SPIDER\ with normalized gradient descent, we propose two new algorithms, namely \SPIDER-SFO and \SPIDER-SFO\textsuperscript{+}, that solve non-convex stochastic optimization problems using stochastic gradients only. 
We provide sharp error-bound results on their convergence rates.
In special, we prove that the \SPIDER-SFO and \SPIDER-SFO\textsuperscript{+} algorithms achieve a  record-breaking gradient computation cost of $\mathcal{O}\left(  \min( n^{1/2} \epsilon^{-2}, \epsilon^{-3} ) \right)$ for finding an $\epsilon$-approximate first-order and  $\tilde{\mathcal{O}}\left(  \min( n^{1/2} \epsilon^{-2}+\epsilon^{-2.5}, \epsilon^{-3} ) \right)$  for  finding an $(\epsilon, \mathcal{O}(\ep^{0.5}))$-approximate second-order stationary point, respectively. 
In addition, we prove that \SPIDER-SFO nearly matches the algorithmic lower bound for finding approximate first-order stationary points under the gradient Lipschitz assumption in the finite-sum setting.
For stochastic zeroth-order method, we prove a cost of $\mathcal{O}( d \min( n^{1/2} \epsilon^{-2}, \epsilon^{-3}) )$ which outperforms all existing results.
\end{abstract}

\tableofcontents

\section{Introduction}

In this paper, we study the optimization problem
\beq\label{opt_eq0}
\minimize_{\x\in \RR^d} ~~~~
  f(\x)
\equiv
 \EE\left[ F(\x; \bm{\zeta}) \right]
\eeq
where the stochastic component $F(\x; \bm\zeta)$, indexed by some random vector $\bm\zeta$, is smooth and possibly \textit{non-convex}.
Non-convex optimization problem of form \eqref{opt_eq0} contains many large-scale statistical learning tasks.
Optimization methods that solve \eqref{opt_eq0} are gaining tremendous popularity due to their favorable computational and statistical efficiencies \citep{bottou2010large, bubeck2015convex, bottou2016optimization}.  
Typical examples of form \eqref{opt_eq0} include principal component analysis, estimation of graphical models, as well as training deep neural networks \citep{GOODFELLOW-BENGIO-COURVILLE}.
The expectation-minimization structure of stochastic optimization problem \eqref{opt_eq0} allows us to perform iterative updates and minimize the objective using its stochastic gradient $\nabla F(\x; \bm\zeta)$ as an estimator of its deterministic counterpart.

A special case of central interest is when the stochastic vector $\bm\zeta$ is finitely sampled.
In such \textit{finite-sum} (or \textit{offline}) case, we denote each component function as $f_i(x)$ and \eqref{opt_eq0} can be restated as
\beq\label{opt_eq}
\minimize_{\x\in \RR^d} ~~~~
  f(\x)
=
\frac{1}{n} \sum_{i=1}^n f_i(\x)
\eeq
where $n$ is the number of individual functions.
Another case is when $n$ is reasonably large or even infinite, running across of the whole dataset is exhaustive or impossible.
We refer it as the \textit{online} (or \textit{streaming}) case.
For simplicity of notations we will study the optimization problem of form \eqref{opt_eq} in both finite-sum and on-line cases till the rest of this paper.

One important task for non-convex optimization is to search for, given the precision accuracy $\ep > 0$, an \textit{$\ep$-approximate first-order stationary point} $\x \in \RR^d$ or $\| \nabla f(\x) \| \le \ep$.
In this paper, we aim to propose a new technique, called the \textit{Stochastic Path-Integrated Differential EstimatoR} (\SPIDER), which enables us to construct an estimator that tracks a deterministic quantity with significantly lower sampling costs.
As the readers will see, the \SPIDER\ technique further allows us to design an algorithm with a faster rate of convergence for non-convex problem \eqref{opt_eq}, in which we utilize the idea of \textit{Normalized Gradient Descent} (NGD) \citep{NESTEROV,hazan2015beyond}.
NGD is a variant of Gradient Descent (GD) where the stepsize is picked to be inverse-proportional to the norm of the full gradient.
Compared to GD, NGD exemplifies faster convergence, especially in the neighborhood of stationary points \citep{levy2016power}.
However, NGD has been less popular due to its requirement of accessing the full gradient and its norm at each update.
In this paper, we estimate and track the gradient and its norm via the \SPIDER\ technique and then hybrid it with NGD.
Measured by \textit{gradient cost} which is the total number of computation of stochastic gradients, our proposed \SPIDER-SFO algorithm achieves a faster rate of convergence in $\cO(\min(n^{1/2} \ep^{-2},  \ep^{-3} ) )$ which outperforms the previous best-known results in both finite-sum \citep{allen2016variance}\citep{reddi2016stochastic} and on-line cases \citep{lei2017non} by a factor of $\cO(\min(n^{1/6},  \ep^{-0.333} ) )$.

For the task of finding stationary points for which we already achieved a faster convergence rate via our proposed \SPIDER-SFO algorithm, a follow-up question to ask is:
\textit{is our proposed \SPIDER-SFO algorithm \textit{optimal} for an appropriate class of smooth functions?}
In this paper, we provide an \textit{affirmative} answer to this question in the finite-sum case.
To be specific, inspired by a counterexample proposed by \citet{carmon2017lower} we are able to prove that the gradient cost upper bound of \SPIDER-SFO algorithm matches the \textit{algorithmic lower bound}.
To put it differently, the gradient cost of \SPIDER-SFO \textit{cannot} be further improved for finding stationary points for some particular non-convex functions.

Nevertheless, it has been shown that for machine learning methods such as deep learning, approximate stationary points that have at least one negative Hessian direction, including saddle points and local maximizers, are often \textit{not} sufficient and need to be avoided or escaped from \citep{dauphin2014identifying,ge2015escaping}.
Specifically, under the smoothness condition for $f(\x)$ and an additional Hessian-Lipschitz condition for $\nabla^2 f(\x)$, we aim to find an \textit{$(\ep, O(\ep^{0.5}))$-approximate second-order stationary point} which is a point $\x\in\RR^d$ satisfying $\|\nabla f(\x)\| \le \ep$ and $\lambda_{\min}( \nabla^2 f(\x) ) \ge - \cO( \ep^{0.5})$ \citep{nesterov2006cubic}.
As a side result, we propose a variant of our \SPIDER-SFO algorithm, named \SPIDER-SFO\textsuperscript{+} (Algorithm \ref{algo:SPIDER-SFOplus}) for finding an approximate second-order stationary point, based a so-called \textit{Negative-Curvature-Search} method.
Under an additional Hessian-Lipschitz assumption, \SPIDER-SFO\textsuperscript{+} achieves an $(\epsilon,\cO(\ep^{0.5}))$-approximate second-order stationary point at a gradient cost of $\tilde\cO(\min(n^{1/2} \ep^{-2} +\ep^{-2.5},  \ep^{-3} ) )$.
In the on-line case, this indicates that our \SPIDER-SFO algorithm improves upon the best-known gradient cost in the on-line case by a factor of $\tilde{\cO}(\ep^{-0.25})$ \citep{allen2017neon2}. 
For the finite-sum case, the gradient cost of \SPIDER\ is sharper than that of the state-of-the-art \NEON+FastCubic/CDHS algorithm in \citet{agarwal2017finding,carmon2016accelerated} by a factor of $\tilde{\cO}(n^{1/4}\epsilon^{0.25})$ when $n \geq \epsilon^{-1}$.%
\footnote{%
In the finite-sum case, when $n \leq \epsilon^{-1}$ \SPIDER-SFO has a slower rate of $\tilde\cO(\ep^{-2.5})$ than the state-of-art $\tilde{\cO}(n^{3/4} \ep^{-1.75})$ rate achieved by \NEON+FastCubic/CDHS \citep{allen2017neon2}. 
\NEON+FastCubic/CDHS has exploited appropriate acceleration techniques, which has \textit{not} been considered for \SPIDER.
}

\subsection{Related Works}
In the recent years, there has been a surge of literatures in machine learning community that analyze the convergence property of non-convex optimization algorithms.
Limited by space and our knowledge, we have listed all literatures that we believe are mostly related to this work.
We refer the readers to the monograph by \citet{jain2017non} and the references therein on recent general and model-specific convergence rate results on non-convex optimization.

\paragraph{First- and Zeroth-Order Optimization and Variance Reduction}
For the general problem of finding approximate stationary points, under the smoothness condition of $f(\x)$, it is known that vanilla Gradient Descent (GD) and Stochastic Gradient Descent (SGD), which can be traced back to \citet{cauchy1847methode} and \citet{robbins1951stochastic} and achieve an $\ep$-approximate stationary point with a gradient cost of $\cO(\min(n \ep^{-2},  \ep^{-4} ) )$ \citep{NESTEROV,ghadimi2013stochastic,nesterov2011random,ghadimi2013stochastic,shamir2017optimal}.

Recently, the convergence rate of GD and SGD have been improved by the variance-reduction type of algorithms \citep{SVRG, SAG}.
In special, the finite-sum Stochastic Variance-Reduced Gradient (SVRG) and on-line Stochastically Controlled Stochastic Gradient (SCSG), to the gradient cost of $\tilde\cO(\min(n^{2/3} \ep^{-2},  \ep^{-3.333} ) )$ \citep{allen2016variance,reddi2016stochastic,lei2017non}.

\paragraph{First-order method for finding approximate stationary points}
Recently, many literature study the problem of how to avoid or escape saddle points and achieve an approximate second-order stationary point at a polynomial gradient cost
\citep{ge2015escaping,jin2017escape,xu2017first,allen2017neon2,hazan2015beyond,levy2016power,allen2017natasha2,reddi2018generic, tripuraneni2017stochastic, jin2017accelerated, lee2016gradient,agarwal2017finding, carmon2016accelerated, paquette2018catalyst}.
Among them, the group of authors \citet{ge2015escaping,jin2017escape} proposed the noise-perturbed variants of Gradient Descent (PGD) and Stochastic Gradient Descent (SGD) that escape from all saddle points and achieve an $\ep$-approximate second-order stationary point in gradient cost of $\tilde\cO(\min(n \ep^{-2}, poly(d)\ep^{-4}) )$ stochastic gradients.
\citet{levy2016power} proposed the noise-perturbed variant of NGD which yields faster evasion of saddle points than GD.

The breakthrough of gradient cost for finding second-order stationary points were achieved in 2016/2017, when the two recent lines of literatures, namely FastCubic \citep{agarwal2017finding} and CDHS \citep{carmon2016accelerated} as well as their stochastic versions \citep{allen2017natasha2,tripuraneni2017stochastic}, achieve a gradient cost of $\tilde\cO(\min( n\epsilon^{-1.5} + n^{3/4}\epsilon^{-1.75},    \ep^{-3.5}) )$ which serve as the best-known gradient cost for finding an $(\epsilon,\cO(\ep^{0.5}))$-approximate second-order stationary point before the initial submission of this paper.\footnote{%
\citet{allen2017natasha2} also obtains a gradient cost of $\tilde\cO(\epsilon^{-3.25})$ to achieve a (modified and weakened) $(\ep,\cO(\ep^{0.25}))$-approximate second-order stationary point.
}
\footnote{%
Here and in many places afterwards, the gradient cost also includes the number of stochastic Hessian-vector product accesses, which has similar running time with computing per-access stochastic gradient.
}
In particular, \citet{agarwal2017finding,tripuraneni2017stochastic} converted the cubic regularization method for finding second-order stationary points \citep{nesterov2006cubic} to stochastic-gradient based and stochastic-Hessian-vector-product-based methods, and \citet{carmon2016accelerated,allen2017natasha2} used a Negative-Curvature Search method to avoid saddle points.
See also recent works by \citet{reddi2018generic} for related saddle-point-escaping methods that achieve similar rates for finding an approximate second-order stationary point.

\paragraph{Online PCA and the NEON method}
In late 2017, two groups \citet{xu2017first,allen2017neon2} proposed a generic saddle-point-escaping method called \NEON, a Negative-Curvature-Search method using stochastic gradients.
Using such \NEON\ method, one can convert a series of optimization algorithms whose update rules use stochastic gradients and Hessian-vector products (GD, SVRG, FastCubic/CDHS, SGD, SCSG, Natasha2, etc.) to the ones using \textit{only} stochastic gradients without increasing the gradient cost.
The idea of \NEON\ was built upon Oja's iteration for principal component estimation \citep{oja1982simplified}, and its global convergence rate was proved to be near-optimal \citep{li2016near,jain2016streaming}.
\citet{allen2017first} later extended such analysis to the rank-$k$ case as well as the gap-free case, the latter of which serves as the pillar of the \NEON\ method.

\paragraph{Other concurrent works}
As the current work is carried out in its final phase, the authors became aware that an idea of resemblance was earlier presented in an algorithm named the \textit{StochAstic Recursive grAdient algoritHm} (SARAH) \citep{nguyen2017sarah,nguyen2017stochastic}. 
Both our \SPIDER-type of algorithms and theirs adopt the recursive stochastic gradient update framework.
Nevertheless, our techniques essentially differ from the works \citet{nguyen2017sarah,nguyen2017stochastic} in two aspects:

\begin{enumerate}[(i)]
\item
The version of SARAH proposed by \citet{nguyen2017sarah,nguyen2017stochastic} can be seen as a variant of gradient descent, while ours hybrids the \SPIDER\ technique with a stochastic version of NGD.

\item
\citet{nguyen2017sarah,nguyen2017stochastic} adopt a large stepsize setting (in fact their goal was to design a memory-saving variant of SAGA \citep{SAGA}), while our algorithms adopt a small stepsize that is proportional to $\ep$;
\end{enumerate}

Soon after the initial submission to NIPS and arXiv release of this paper, we became aware that similar convergence rate results for stochastic first-order method were also achieved independently by the so-called SNVRG algorithm \citep{zhou2018stochastic,zhou2018finding}.\footnote{%
To our best knowledge, the work by \citet{zhou2018stochastic,zhou2018finding} appeared on-line on June 20, 2018 and June 22, 2018, separately. SNVRG \citep{zhou2018stochastic} obtains a gradient complexity of $\tilde{\cO}(\min(n^{1/2} \ep^{-2}, \ep^{-3}))$ for finding an  approximate first-order stationary point, and  achieves    $\tilde{\cO}(\ep^{-3})$  gradient complexity  for finding an approximate second-order stationary point \citep{zhou2018finding} for a wide range of $\delta$. 
By exploiting the third-order smoothness condition, SNVRG can also achieve an $(\epsilon, \cO(\epsilon^{0.5}))$-approximate second-order stationary point in  $\tilde{\cO}(\ep^{-3})$ gradient costs.
}

\subsection{Our Contributions}
In this work, we propose the Stochastic Path-Integrated Differential Estimator (\SPIDER) technique, which significantly avoids excessive access of stochastic oracles and reduces the time complexity.
Such technique can be potential applied in many stochastic estimation problems.

\begin{enumerate}[(i)]
\item
As a first application of our \SPIDER\ technique, we propose the \SPIDER-SFO algorithm (Algorithm \ref{algo:SPIDER-SFO}) for finding an approximate first-order stationary point for non-convex stochastic optimization problem \eqref{opt_eq}, and prove the optimality of such rate in at least one case. 
Inspired by recent works \citet{SVRG, carmon2016accelerated, carmon2017lower} and independent of \citet{zhou2018stochastic,zhou2018finding}, this is the \textit{first} time that the gradient cost of $\cO(\min(n^{1/2} \ep^{-2},  \ep^{-3} ))$ in both upper and lower (finite-sum only) bound for finding first-order stationary points for problem \eqref{opt_eq} were obtained.

\item  Following   \citet{carmon2016accelerated, allen2017neon2,xu2017first}, we propose  \SPIDER-SFO\textsuperscript{+} algorithm (Algorithm \ref{algo:SPIDER-SFOplus})   for finding an approximate second-order stationary point for non-convex stochastic optimization problem. To best of our knowledge, this is also the \textit{first} time that  the gradient cost of $\tilde{\cO}(\min(n^{1/2}\epsilon^{-2}+\epsilon^{-2.5},\ep^{-3}))$ achieved with  standard assumptions. 

\item
As a second application of our \SPIDER\ technique, we apply it to zeroth-order optimization for problem (\ref{opt_eq}) and achieves  individual function accesses of $\cO(\min (dn^{1/2}\ep^{-2}, d\ep^{-3}))$.  To best of our knowledge, this is also the \textit{first} time that using Variance Reduction technique \citep{SAG, SVRG} to  reduce the individual function accesses for non-convex problems to the aforementioned complexity.

\item  We propose a much simpler analysis for proving convergence  to a stationary point.  One can flexibly  apply our proof techniques to analyze others algorithms, e.g. SGD, SVRG \citep{SVRG}, and SAGA \citep{SAGA}.

\end{enumerate}

\vspace{.1in}
\noindent\textbf{Organization.}
The rest of this paper is organized as follows.
\S\ref{sec:idea} presents the core idea of stochastic path-integrated differential estimator that can track certain quantities with much reduced computational costs.
\S\ref{sec:SFO} provides the \SPIDER\ method for stochastic first-order methods and convergence rate theorems of this paper for finding  approximate first-order stationary and second-order stationary points, and details a comparison with concurrent works.
\S\ref{sec:SZO} provides the \SPIDER\ method for stochastic zeroth-order methods and relevant convergence rate theorems.
\S\ref{sec:summary} concludes the paper with future directions.
All the detailed proofs are deferred to the appendix in their order of appearance.

\vspace{.1in}
\noindent\textbf{Notation.}
Throughout this paper, we treat the parameters $L,\Delta, \sigma,$ and $\rho$, to be specified later as global constants.
Let $\|\cdot\|$ denote the Euclidean norm of a vector or spectral norm of a square matrix.
Denote $p_n = \cO(q_n)$ for a sequence of vectors $p_n$ and positive scalars $q_n$ if there is a global constant $C$ such that $|p_n| \le Cq_n$, and $p_n = \tilde{\cO}(q_n)$ such $C$ hides a poly-logarithmic factor of the parameters. Denote $p_n = \Omega (q_n)$ if there is a global constant $C$ such that $| p_n|\geq C q_n$.
Let $\lambda_{\min}(\A)$ denote the least eigenvalue of a real symmetric matrix $\A$.
For fixed $K \ge k \ge 0$, let $\x_{k:K}$ denote the sequence $\{\x^k ,\dots , \x^K \}$.
Let $[n] = \{1,\dots,n\}$ and $S$ denote the cardinality of a multi-set $\cS \subset [n]$ of samples (a generic set that allows elements of multiple instances).
For simplicity, we further denote the averaged sub-sampled stochastic estimator $\hB_{S} := (1/S) \sum_{i\in \cS} \hB_i$ and averaged sub-sampled gradient $\nabla f_S := (1 / S)\sum_{i\in \cS} \nabla f_i$.
Other notations are explained at their first appearance.

\section{Stochastic Path-Integrated Differential Estimator: Core Idea}\label{sec:idea}
In this section, we present in detail the underlying idea of our Stochastic Path-Integrated Differential Estimator (\SPIDER) technique behind the algorithm design.
As the readers will see, such technique significantly avoids excessive access of the stochastic oracle and reduces the complexity, which is of independent interest and has potential applications in many stochastic estimation problems. 

Let us consider an arbitrary deterministic vector quantity $Q(\x)$.
Assume that we observe a sequence $\hx_{0:K}$, and we want to dynamically track $Q(\hx^k )$ for $k = 0, 1, \dots, K.$
Assume further that we have an initial estimate $\tilde{Q}(\hx^0 ) \approx Q(\hx^0 )$, and an unbiased estimate $\bxi _k ( \hx_{0:k})$ of $Q(\hx^k ) - Q( \hx^{ k-1} )$ such that for each $k = 1,\dots, K$
\[
\EE\left[ \bxi _k ( \hx_{0:k})  \mid \hx_{0:k}  \right] = Q(\hx^k ) - Q(\hx^{k-1} )
.
\]
Then we can integrate (in the discrete sense) the stochastic differential estimate as
\beq\label{tildeQ}
\tilde{Q}(\hx_{0:K} ) := \tilde{Q}(\hx^0 ) + \sum_{k=1}^K \bxi _k (\hx_{0:k} )
.
\eeq
We call estimator $\tilde{Q}(\hx_{0:K})$ the \textit{Stochastic Path-Integrated Differential EstimatoR}, or \SPIDER\ for brevity.
We conclude the following proposition which bounds the error of our estimator $\|\tilde{Q}(\hx_{0:K} )- Q(\hx^K )\| $, in terms of both expectation and high probability:

\begin{proposition}\label{prop:aggregate}
We have 
\begin{enumerate}[(i)]
\item
The martingale variance bound has
\beq\label{mart_var}
\EE \|\tilde{Q}(\hx_{0:K} ) - Q(\hx^K )\| ^2 = 
\EE \|\tilde{Q}(\hx^0 ) - Q(\hx^ 0 )\| ^2 
+
\sum_{k=1}^K \EE \| \bxi_ k (\hx_{0:k} ) - (Q(\hx^k ) - Q(\hx^{k-1} )) \| ^2
 .
\eeq
\item
Suppose
\beq\label{Q0bdd}
\|\tilde{Q}(\hx^0 ) - Q(\hx^ 0 )\| \le b_0
\eeq
and for each $k=1,\dots,K$
\beq\label{Qtbdd}
 \| \bxi_ k (\hx_{0:k} ) - (Q(\hx^k ) - Q(\hx^{k-1} )) \| \le b_k, 
\eeq
Then  for any $\gamma > 0$ and  a given $k\in\{1,\dots,K\}$ we have with probability at least $1-4\gamma$
\beq\label{Qconcentrate}
\left\| \tilde{Q}(\hx_{0:k} )- Q(\hx^k ) \right\|
 \le
2 \sqrt{\sum_{s=0}^k b_s^2 \cdot \log \frac{1}{\gamma}}
.
\eeq
\end{enumerate}

\end{proposition}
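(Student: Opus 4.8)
The plan is to recognize the sequence $M_k := \tilde{Q}(\hx_{0:k}) - Q(\hx^k)$ as a martingale adapted to the filtration $\mathcal{F}_k := \sigma(\hx_{0:k})$, and then apply standard martingale tools. First I would verify the martingale structure: writing $D_k := \bxi_k(\hx_{0:k}) - (Q(\hx^k) - Q(\hx^{k-1}))$, the telescoping identity
\[
M_k = M_0 + \sum_{s=1}^k D_s
\]
follows directly from the definition \eqref{tildeQ} of $\tilde{Q}$ together with the telescoping of $Q(\hx^k) - Q(\hx^0) = \sum_{s=1}^k (Q(\hx^s) - Q(\hx^{s-1}))$. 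The unbiasedness hypothesis $\EE[\bxi_k \mid \hx_{0:k}]$... wait — here one must be careful: the conditioning in the hypothesis is on $\hx_{0:k}$, so in fact $\EE[D_k \mid \mathcal{F}_{k-1}]$ is what we need, which requires that conditioning on $\hx_{0:k}$ and taking a further expectation over $\hx^k$ given $\hx_{0:k-1}$ yields a martingale difference sequence; I would state this as the standing interpretation (the estimator $\bxi_k$ is unbiased for the increment given the path up to and including time $k$, and $\hx^k$ itself is $\mathcal{F}_{k-1}$-measurable given the observation model, so $\{D_s\}$ is a martingale difference sequence). Granting this, part (i) is immediate: $\EE\|M_K\|^2 = \EE\|M_0\|^2 + \sum_{k=1}^K \EE\|D_k\|^2$ by orthogonality of martingale increments (the cross terms vanish since $\EE[\langle D_j, D_k\rangle] = 0$ for $j \ne k$ by the tower property).

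For part (ii), the plan is to apply a vector-valued Azuma–Hoeffding / Bernstein-type concentration inequality. Since the $D_s$ are bounded, $\|D_s\| \le b_s$ (and $\|M_0\| \le b_0$), I would invoke the standard fact that for a martingale with bounded increments in a Hilbert space, $\|M_k\|$ concentrates: specifically $\PP(\|M_k\| \ge t) \le C \exp(-c t^2 / \sum_{s=0}^k b_s^2)$ for appropriate absolute constants. The cleanest route is to reduce to the scalar case by a symmetrization or by bounding each coordinate, or — more directly — to use the Pinelis inequality for martingales in $2$-smooth Banach spaces, which gives exactly $\PP(\sup_k \|M_k\| \ge t) \le 2\exp(-t^2/(2\sum b_s^2))$. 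To land on the stated constant $2\sqrt{\sum_{s=0}^k b_s^2 \log(1/\gamma)}$ with probability $1 - 4\gamma$, I would set $t = 2\sqrt{\sum_{s=0}^k b_s^2 \log(1/\gamma)}$ and check that the resulting tail bound is at most $4\gamma$; the factor $4$ presumably absorbs the constant from whichever concentration inequality is used (e.g., treating the initial-error term $M_0$ and the martingale part separately, or using a two-sided bound on a real-valued reduction).

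The main obstacle I anticipate is not the martingale orthogonality (routine) but rather getting the vector-valued concentration with clean constants. A bare Azuma bound applies to real-valued martingales; for the $\RR^d$-valued $M_k$ one either needs Pinelis's Hilbert-space extension or a dimension-free argument. I would handle this by projecting onto the (random) direction of $M_k$ — but since that direction is not predictable, the clean fix is to note $\|M_k\| = \sup_{\|u\|=1} \langle u, M_k \rangle$ and control a fixed direction with Azuma, then either pay an $\epsilon$-net cost (undesirable, dimension-dependent) or, better, use the fact that $\phi(x) = \|x\|^2$ satisfies the $2$-smoothness inequality $\|x+y\|^2 \le \|x\|^2 + 2\langle x, y\rangle + \|y\|^2$, so that $\exp(\lambda \|M_k\|^2)$-type or $\|M_k\|^2 - \sum b_s^2$-type quantities can be made into supermartingales directly, yielding the desired dimension-free tail. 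This is the one step where I would be most careful about constants, and it is where the factor $4\gamma$ (rather than $2\gamma$) most plausibly originates.
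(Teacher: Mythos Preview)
Your proposal is correct and takes essentially the same approach as the paper: recognize $\tilde{Q}(\hx_{0:k}) - Q(\hx^k)$ as a martingale, obtain part (i) from orthogonality of martingale increments, and obtain part (ii) by applying Pinelis's dimension-free Azuma--Hoeffding inequality for Hilbert-space-valued martingales (Theorem~3.5 in Pinelis, 1994), which the paper quotes as $\PP(\|\sum_k \bm\ep_k\| \ge \lambda) \le 4\exp(-\lambda^2/(4\sum_k B_k^2))$; setting $\lambda = 2\sqrt{\sum_{s=0}^k b_s^2 \log(1/\gamma)}$ gives exactly the $1-4\gamma$ bound. Your instinct that Pinelis is the clean route (rather than projection or $\epsilon$-nets) is exactly what the paper does, and the factor $4$ indeed comes directly from the constant in that inequality rather than from splitting off $M_0$.
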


Proposition \ref{prop:aggregate}(i) can be easily concluded using the property of square-integrable martingales.
To prove the high-probability bound in Proposition \ref{prop:aggregate}(ii), we need to apply an Azuma-Hoeffding-type concentration inequality \citep{pinelis1994optimum}.
See \S\ref{sec:pinelis} in the Appendix for more details.

Now, let $\hB$ map any $\x\in \RR^d$ to a random estimate $\hB_i(\x)$ such that, conditioning on the observed sequence $\x_{0:k}$, we have for each $k = 1,\dots, K$,
\beq\label{hAidist2}
\EE\left[
\hB_i(\x^{k})  - \hB_i(\x^{k-1})
\mid
\x_{0:k}
\right]
=
\hV^{k} -  \hV^{k-1}
.
\eeq
At each step $k$ let $S_*$ be a subset that samples $\cS_*$ elements in $[n]$ with replacement, and let the stochastic estimator $\hB_{S_*} = (1/\cS_*) \sum_{i\in S_*} \hB_i$ satisfy  
\beq\label{hAidist}
\Ei \|\hB_i (\x) -\hB_i (\y)\|^2 \leq L_\hB^2 \|\x -\y  \|^2
,
\eeq
and $\| \x^{k}-\x^{k-1} \| \leq  \epsilon_1$ for all $k = 1,\dots,K$.
Finally, we set our estimator $\hV^{k}$ of $\hB(\x^{k})$ as
\[
\hV^{k} = \hB_{S_*}(\x^{k})  - \hB_{S_*}(\x^{k-1}) + \hV^{k-1}
.
\]
Applying Proposition \ref{prop:aggregate} immediately concludes the following lemma,
which gives an error bound of the estimator $\hV^{k}$ in terms of the second moment of $\| \hV^{k} - \hB(\x^k)  \|$:

\begin{lemma}\label{lemm:aggregate}
We have under the condition \eqref{hAidist} that for all $k = 1,\dots, K$,
\begin{eqnarray}
\E\| \hV^{k} - \hB(\x^k) \|^2
 \leq  
\frac{k L_\hB^2 \epsilon_1^2}{\cS_*} + \EE \|\hV^0 - \hB(\x^0)  \|^2
.
\end{eqnarray}
\end{lemma}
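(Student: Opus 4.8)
The plan is to deduce Lemma \ref{lemm:aggregate} as an immediate corollary of the martingale variance bound in Proposition \ref{prop:aggregate}(i), applied with the dictionary $Q \mapsto \hB$, initial estimate $\tilde{Q}(\hx^0)\mapsto \hV^0$, and per-step differential estimate $\bxi_k(\hx_{0:k}) \mapsto \hB_{S_*}(\x^k) - \hB_{S_*}(\x^{k-1})$; after that, the only work is to estimate each summand appearing on the right-hand side of \eqref{mart_var}, and this estimate is where the variance-reduction factor $1/\cS_*$ enters.

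First I would check that the two structural requirements behind Proposition \ref{prop:aggregate} hold. The path-integral form is immediate: unrolling the recursion $\hV^{k} = \hB_{S_*}(\x^{k}) - \hB_{S_*}(\x^{k-1}) + \hV^{k-1}$ gives $\hV^{k} = \hV^0 + \sum_{s=1}^{k}\bigl(\hB_{S_*}(\x^{s}) - \hB_{S_*}(\x^{s-1})\bigr)$, which matches \eqref{tildeQ}. Conditional unbiasedness of each increment for $\hB(\x^{k}) - \hB(\x^{k-1})$ given $\x_{0:k}$ follows from the hypothesis \eqref{hAidist2}: the multiset $S_*$ drawn afresh at step $k$ is independent of $\x_{0:k}$, and $\hB_{S_*}$ is an average of i.i.d. copies of $\hB_i$, so linearity of conditional expectation transfers the per-sample identity \eqref{hAidist2} to the average. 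Hence Proposition \ref{prop:aggregate}(i), applied to the prefix $\x_{0:k}$ for an arbitrary $k\in\{1,\dots,K\}$, yields
\[
\EE\left\| \hV^{k} - \hB(\x^k)\right\|^2
= \EE\left\| \hV^0 - \hB(\x^0)\right\|^2
+ \sum_{s=1}^{k} \EE\left\| \hB_{S_*}(\x^{s}) - \hB_{S_*}(\x^{s-1}) - \bigl(\hB(\x^s) - \hB(\x^{s-1})\bigr)\right\|^2 .
\]

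Next I would bound a generic summand. Writing $Y_i := \hB_i(\x^{s}) - \hB_i(\x^{s-1})$ and $\mu := \hB(\x^s) - \hB(\x^{s-1}) = \EE[Y_i \mid \x_{0:s}]$, the centered average $(1/\cS_*)\sum_{i\in S_*}(Y_i - \mu)$ is an average of $\cS_*$ i.i.d. mean-zero terms, so its (conditional) second moment equals $(1/\cS_*)\,\EE[\|Y_i - \mu\|^2 \mid \x_{0:s}]$. Dropping the nonnegative term in the identity $\EE[\|Y_i - \mu\|^2\mid \x_{0:s}] = \EE[\|Y_i\|^2\mid \x_{0:s}] - \|\mu\|^2$, then invoking the mean-square Lipschitz condition \eqref{hAidist} and the uniform step bound $\|\x^{s} - \x^{s-1}\| \le \epsilon_1$, gives
\[
\EE\left\| \hB_{S_*}(\x^{s}) - \hB_{S_*}(\x^{s-1}) - \mu \right\|^2
\le \frac{1}{\cS_*}\, \EE\left\| \hB_i(\x^{s}) - \hB_i(\x^{s-1}) \right\|^2
\le \frac{L_\hB^2\,\|\x^{s}-\x^{s-1}\|^2}{\cS_*}
\le \frac{L_\hB^2\,\epsilon_1^2}{\cS_*}.
\]
Substituting this into the displayed martingale-variance equality and summing the $k$ identical bounds gives exactly $\EE\|\hV^{k} - \hB(\x^k)\|^2 \le k L_\hB^2 \epsilon_1^2/\cS_* + \EE\|\hV^0 - \hB(\x^0)\|^2$.

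The only place that needs genuine care — and what I would flag as the main (albeit mild) obstacle — is the conditional-expectation bookkeeping: one must use that the sample set $S_*$ at step $s$ carries fresh randomness independent of $\x_{0:s}$, which is what makes $\{\hV^{k} - \hB(\x^k)\}_k$ a square-integrable martingale and thus legitimizes Proposition \ref{prop:aggregate}, and one must exploit the i.i.d.-with-replacement structure to obtain the exact $1/\cS_*$ factor rather than a weaker bound. Everything else is a one-line use of \eqref{hAidist} and the step-size bound.
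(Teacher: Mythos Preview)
Your proposal is correct and follows essentially the same approach as the paper: apply the martingale variance identity from Proposition~\ref{prop:aggregate}(i), then bound each increment via the i.i.d.\ $1/\cS_*$ variance reduction, the inequality $\EE\|Y_i-\mu\|^2 \le \EE\|Y_i\|^2$, the mean-square Lipschitz condition \eqref{hAidist}, and the step bound $\|\x^s-\x^{s-1}\|\le\epsilon_1$. The only cosmetic difference is that the paper writes the argument as a one-step recursion $\E_k\|\hV^k-\hB(\x^k)\|^2 \le L_\hB^2\epsilon_1^2/\cS_* + \|\hV^{k-1}-\hB(\x^{k-1})\|^2$ and then telescopes, whereas you invoke the full summed form of \eqref{mart_var} up front; the content is identical.
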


It turns out that one can use \SPIDER\ to track many quantities of interest, such as stochastic gradient, function values, zero-order estimate gradient, functionals of Hessian matrices, etc.
Our proposed \SPIDER-based algorithms in this paper take $\hB_i$ as the stochastic gradient $\nabla f_i$ and the zeroth-order estimate gradient, separately.

\section{SPIDER for Stochastic First-Order Method}\label{sec:SFO}
In this section, we apply \SPIDER\ to the task of finding both first-order and second-order stationary points for non-convex stochastic optimization.
The main advantage of \SPIDER-SFO lies in using SPIDER to estimate the gradient with a low computation cots.
We introduce the basic settings and assumptions in \S\ref{ssec:SFO_assu} and propose the main error-bound theorems for finding approximate first-order  and second-order stationary points, separately in  \S\ref{ssec:SFO_upper} and \S\ref{ssec:SFO_SSP}.

\subsection{Settings and Assumptions}\label{ssec:SFO_assu}
We first introduce the formal definition of approximate first-order and second-order stationary points, as follows.
\begin{definition}
We call $\x\in \RR^d$ an \textit{$\epsilon$-approximate first-order stationary point}, or simply an \textit{FSP}, if
\begin{eqnarray}\label{SSP}
\|\nabla f(\x)\|\leq \epsilon.
\end{eqnarray}
Also, call $\x$ an \textit{$(\epsilon, \delta)$-approximate second-order stationary point}, or simply an \textit{SSP}, if
\begin{eqnarray}
\|\nabla f(\x)\|\leq \epsilon,
  \quad \quad   
\lambda_{\min}\left(  \nabla^2 f(\x)   \right)    \ge -\delta
.
\end{eqnarray}
\end{definition}

The definition of an $(\epsilon, \delta)$-approximate second-order stationary point generalizes the classical version where $\delta = \sqrt{\rho\ep}$, see e.g.~\citet{nesterov2006cubic}.
For our purpose of analysis, we also pose the following additional assumption:

\begin{assumption}\label{assu:main}
We assume the following

\begin{enumerate}[(i)]
	\item \label{a1} 
The $\Delta := f(\x^0)- f^* < \infty$ where  $f^*  =  \inf_{\x\in\RR^d} f(\x)$ is the global infimum value of $f(\x)$;
	
	\item \label{a2} 
The component function $f_i(\x)$ has an averaged $L$-Lipschitz gradient, i.e.~for all $\x,\y$,
\[
\Ei \| \nabla f_i(\x) -\nabla f_i(\y) \|^2 \leq L^2 \| \x-\y\|^2
;
\]
	
	\item \label{a3} 
(For on-line case only)
the stochastic gradient has a finite variance bounded by $\sigma^2 < \infty$, i.e.
\[
\Ei \left\|\nabla f_{i}(\x) -\nabla f(\x) \right\|^2\leq \sigma^2
.   
\]
		
\end{enumerate}
\end{assumption}

Alternatively, to obtain high-probability results using concentration inequalities, we propose the following more stringent assumptions:
\begin{assumption}\label{assu:main2}
We assume that Assumption \ref{assu:main} holds and, in addition,
\begin{enumerate}[(i)]
	\item[(ii')] \label{a4} (Optional)  
each component function $f_i(\x)$ has $L$-Lipschitz continuous gradient, i.e.~for all $i, \x, \y$,
\[
\left\|\nabla f_i(\x)-\nabla f_i(\y)\right\|\leq L\|\x-\y\|
.
\]
Note when $f$ is twice continuously differentiable, Assumption \ref{assu:main} (ii) is equivalent to  $ \Ei \|\nabla^2  f_i(\x)  \|^2\leq L^2 $ for all $\x$ and is  weaker than the additional Assumption \ref{assu:main2} (ii'), since the absolute norm squared bounds the variance for any random vector.
		\item[(iii')]\label{a5}
		(For on-line case only) the gradient of each component function $f_i(\x)$  has finite bounded  variance by $\sigma^2 < \infty$ (with probability $1$) , i.e. for all $i, \x$, 
		$$  \|\nabla f_i(\x) - \nabla f(\x)\|^2\leq \sigma^2   .$$
Assumption \ref{assu:main2} is  common in applying concentration laws to obtain high probability result\footnote{In this paper, we use Azuma-Hoeffding-type concentration inequality to obtain high probability results like \cite{xu2017first,allen2017neon2}. By applying Bernstein inequality, under the Assumption \ref{assu:main}, the parameters in the  Assumption \ref{assu:main2} are allowed to be $\tilde{\Omega} (\epsilon^{-1})$ larger  without hurting the convergence rate.}.
\end{enumerate}
\end{assumption}

For the problem of finding an $(\epsilon,\delta)$-approximate \SSP, we pose in addition to Assumption \ref{assu:main} the following assumption:

\begin{assumption}\label{assu:SSP}
We assume that Assumption \ref{assu:main2} (including (ii')) holds and, in addition, each component function $f_i(\x)$ has  $\rho$-Lipschitz continuous Hessian, i.e.~for all $i, \x, \y$,
\[
\|\nabla^2 f_i(\x)-\nabla^2 f_i(\y) \|\leq \rho \|\x-\y\|
.
\]
\end{assumption}

We emphasize that Assumptions \ref{assu:main}, \ref{assu:main2}, and \ref{assu:SSP} are standard for non-convex stochastic optimization \citep{agarwal2017finding, carmon2017lower, jin2017escape, xu2017first, allen2017neon2}.

\subsection{First-Order Stationary Point}\label{ssec:SFO_upper}

\begin{algorithm}[tb]
\caption{\SPIDER-SFO: Input $\x^0$, $q$, $S_1$, $S_2$, $n_0$, $\epsilon$, and $\cep$ (For finding first-order stationary point)}
\label{algo:SPIDER-SFO}
\begin{algorithmic}[1]
\FOR {$k=0$ to $K$}
\IF{$\mod(k,q)=0$}
		\STATE Draw $S_1$ samples (or compute the full gradient for the finite-sum case), let $\vv^k = \nabla f_{\cS_1}(\x^k)$
 \label{line3}
\ELSE
		\STATE Draw $S_2$ samples, and let $\vv^k = \nabla f_{\cS_2}(\x^k) - \nabla f_{\cS_2}(\x^{k-1}) + \vv^{k-1}$ 
 \label{line5}
\ENDIF
\newline
\STATE  \textbf{\hspace{-.2in} OPTION $\mbox{I}$}
  				  {\hfill $\diamond$ for convergence rates in high probability}
\IF{ $\|\vv^k\| \le 2\cep$}
\RETURN $\x^k$
\ELSE
	\STATE   $\x^{k+1} = \x^k - \eta \cdot (\vv^k / \|\vv^k\| ) $ \text{ where}$\quad \eta = \frac{\epsilon}{L n_0}$
	\label{line:update}
\ENDIF
\newline
\STATE  \textbf{\hspace{-.2in} OPTION $\mbox{II}$}
  				  {\hfill $\diamond$ for convergence rates in expectation}
\STATE   $\x^{k+1} = \x^k - \eta^k \vv^k$ \text{ where}$\quad \eta^k = \min \left( \frac{\epsilon}{Ln_0\|\vv^k\|}, \frac{1}{2L n_0} \right)$ 	\label{line:update2}
\newline
\ENDFOR
\newline
\STATE   \textbf{OPTION $\mbox{I}$}: Return $\x^K$
  				  {\hfill $\diamond$ however, this line is \textit{not} reached with high probability}
\newline
\STATE   \textbf{OPTION $\mbox{II}$}: Return $\tilde{\x}$ chosen uniformly at random from $\{\x^k \}_{k=0}^{K-1}$
\end{algorithmic}
\end{algorithm}

Recall that NGD has iteration update rule
\beq\label{NGD}
\x^{k+1} 
= 
\x^{k} -\eta \frac{\nabla f (\x^{k})}{\| \nabla f (\x^{k}) \|}
,
\eeq
where $\eta$ is a constant step size.
The NGD update rule \eqref{NGD} ensures $\| \x^{k+1} - \x^k\|$ being constantly equal to the stepsize $\eta$, and might  fastly escape from saddle points and converge to a second-order stationary point \citep{levy2016power}. We propose \SPIDER-SFO in Algorithm \ref{algo:SPIDER-SFO}, which is like a stochastic variant of NGD with the \SPIDER\ technique applied, so as to maintain an estimator in each epoch $\nabla f(\x^k)$ at a higher accuracy under limited gradient budgets.

To analyze the convergence rate of \SPIDER-SFO, let us first consider the on-line case for Algorithm \ref{algo:SPIDER-SFO}.
We let the input parameters be
\beq\label{inputOne}
S_1 = \frac{2\sigma^2}{\epsilon^2}
,\qquad
S_2 = \frac{2\sigma}{\epsilon n_0}
,\qquad
\eta =  \frac{\epsilon}{Ln_0}
,\qquad
\eta^k =  \min \left( \frac{\epsilon}{Ln_0\|\vv^k\|}, \frac{1}{2L n_0} \right)
,\qquad
q = \frac{ \sigma n_0}{\epsilon}
,
\eeq
where $n_0 \in [1,2\sigma/\epsilon]$ is a free parameter to choose.%
\footnote{%
When $n_0 = 1$,  the mini-batch size is $2\sigma / \epsilon$, which is the largest mini-batch size that Algorithm \ref{algo:SPIDER-SFO} allows to choose. 
}
In this case, $\vv^k$ in Line \ref{line5} of Algorithm \ref{algo:SPIDER-SFO} is a \SPIDER\ for $\nabla  f(\x^k)$. 
To see this, recall $\nabla f_i (\x^{k-1})$ is the stochastic gradient drawn at step $k$ and
\begin{eqnarray}
\Ei \left[
\nabla f_i(\x^k) - \nabla f_i(\x^{k-1})
 \mid
\x_{0:k}
\right]
  = 
  \nabla f(\x^k) - \nabla f(\x^{k-1})
  .
\end{eqnarray}
Plugging in $\cV^k = \vv^k$ and $\hB_i = \nabla f_i$ in Lemma \ref{lemm:aggregate} of \S\ref{sec:idea}, we can use $\vv^k$ in Algorithm \ref{algo:SPIDER-SFO} as the \SPIDER\ and conclude the following lemma that is pivotal to our analysis.
\begin{lemma}\label{111}
Set the parameters $S_1$, $S_2$, $\eta$, and $q$ as in \eqref{inputOne}, and $k_0 = \lfloor k/q \rfloor \cdot q $.
Then under the Assumption \ref{assu:main}, we have
\[
\E \left[ \| \vv^k -\nabla f(\x^k)  \|^2 \mid \x_{0:k_0} \right]
 \leq  \epsilon^2 
.
\]
Here we compute the conditional expectation over the randomness of $x_{(k_0+1): k}$. 
\end{lemma}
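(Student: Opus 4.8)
The plan is to invoke Lemma~\ref{lemm:aggregate} with the identifications $\hB_i = \nabla f_i$, $\hV^k = \vv^k$, and $\hB(\x^k) = \nabla f(\x^k)$, and then verify that all the hypotheses of that lemma are met with the parameter choices in \eqref{inputOne}. First I would note that within an epoch (i.e., for indices $k$ with $k_0 < k$ and $k_0 = \lfloor k/q\rfloor\cdot q$), the recursion in Line~\ref{line5} is exactly the path-integrated update $\hV^k = \hB_{S_2}(\x^k) - \hB_{S_2}(\x^{k-1}) + \hV^{k-1}$, and the unbiasedness condition \eqref{hAidist2} holds because $\EE[\nabla f_i(\x^k) - \nabla f_i(\x^{k-1})\mid \x_{0:k}] = \nabla f(\x^k) - \nabla f(\x^{k-1})$ (this is the displayed identity just before the lemma). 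The Lipschitz-in-expectation condition \eqref{hAidist} holds with $L_{\hB} = L$ by Assumption~\ref{assu:main}(ii).

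Next I would supply the two quantitative ingredients. For the per-step displacement bound $\|\x^k - \x^{k-1}\|\le \epsilon_1$: under OPTION~I the update is $\x^{k+1} = \x^k - \eta\,\vv^k/\|\vv^k\|$, so $\|\x^{k+1}-\x^k\| = \eta = \epsilon/(Ln_0)$ exactly; under OPTION~II the stepsize is capped so that $\|\x^{k+1}-\x^k\| = \eta^k\|\vv^k\| \le \epsilon/(Ln_0)$ as well. Hence $\epsilon_1 = \epsilon/(Ln_0)$ in both cases. For the initialization error at the start of the epoch: at $k = k_0$ we have $\vv^{k_0} = \nabla f_{\cS_1}(\x^{k_0})$, an average of $S_1$ i.i.d.\ samples, so by Assumption~\ref{assu:main}(iii) the variance bound gives $\EE\|\vv^{k_0} - \nabla f(\x^{k_0})\|^2 \le \sigma^2/S_1 = \epsilon^2/2$ with $S_1 = 2\sigma^2/\epsilon^2$. (In the finite-sum case the full gradient is computed, so this term is $0$, which is even better.)

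Then I would plug into the bound of Lemma~\ref{lemm:aggregate}. Conditioning on $\x_{0:k_0}$ and running the path from $k_0$ to $k$, the number of integration steps is at most $q - 1 < q$, so
\[
\EE\big[\|\vv^k - \nabla f(\x^k)\|^2 \mid \x_{0:k_0}\big]
\le \frac{(k - k_0)\,L^2\epsilon_1^2}{S_2} + \EE\big[\|\vv^{k_0} - \nabla f(\x^{k_0})\|^2 \mid \x_{0:k_0}\big]
\le \frac{q\,L^2 (\epsilon/(Ln_0))^2}{S_2} + \frac{\epsilon^2}{2}.
\]
With $q = \sigma n_0/\epsilon$ and $S_2 = 2\sigma/(\epsilon n_0)$, the first term is $\frac{(\sigma n_0/\epsilon)\,(\epsilon^2/n_0^2)}{2\sigma/(\epsilon n_0)} = \frac{\epsilon^2}{2}$, and the two halves sum to $\epsilon^2$, which is the claim. (A minor bookkeeping point: I should make sure the indexing of ``$k$ steps'' in Lemma~\ref{lemm:aggregate} matches ``$k - k_0 \le q-1$'' here; since $q-1 < q$ this only helps.)

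The main obstacle is not any single hard estimate but the care needed in setting up the conditioning correctly: Lemma~\ref{lemm:aggregate}'s martingale bound is stated for a fixed starting estimator $\hV^0$, whereas here the ``restart'' happens every $q$ steps and we are asked for a bound conditional on $\x_{0:k_0}$ with the expectation taken only over the fresh randomness of $\x_{(k_0+1):k}$ and the sample sets drawn in those steps. I would therefore restate/apply Lemma~\ref{lemm:aggregate} to the shifted sub-process started at $k_0$ (treating $\x^{k_0}$ and $\vv^{k_0}$ as the ``initial'' data), check that $\vv^{k_0}$ is independent of the subsequent mini-batches so that the tower property and the martingale-difference orthogonality in Proposition~\ref{prop:aggregate}(i) still apply verbatim, and only then carry out the arithmetic above. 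Everything else is a direct substitution of the parameter values in \eqref{inputOne}.
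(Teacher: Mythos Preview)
Your proposal is correct and follows essentially the same route as the paper's proof: bound the initialization error at $k_0$ by $\sigma^2/S_1=\epsilon^2/2$, bound the per-step displacement by $\epsilon/(Ln_0)$, and then apply Lemma~\ref{lemm:aggregate} over at most $q$ steps to pick up the remaining $\epsilon^2/2$. The paper's argument is a touch terser (it only writes the displacement bound for OPTION~II and does not belabor the conditioning), but the logic and arithmetic are identical to yours.
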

Lemma \ref{111} shows that our \SPIDER\ $\vv^k$ of $\nabla f(\x)$ maintains an error of $\cO(\epsilon)$.
Using this lemma, we are ready to present the following results for Stochastic First-Order (SFO) method for finding first-order stationary points of \eqref{opt_eq}.

\paragraph{Upper Bound for Finding First-Order Stationary Points, in Expectation}
\begin{theorem}[First-Order Stationary Point, on-line setting, expectation]\label{theo:FSOone}
For the on-line case, set the parameters $S_1$, $S_2$, $\eta$, and $q$ as in \eqref{inputOne}, and  $K = \left\lfloor(4L \Delta n_0)\epsilon^{-2}\right\rfloor+1$. 
Then under the Assumption \ref{assu:main}, for  Algorithm \ref{algo:SPIDER-SFO} with OPTION $\uppercase\expandafter{\romannumeral1}$,  after $K$ iteration, we have
\begin{eqnarray}
\E \left[\|\nabla f(\tx) \|\right]\leq 5\epsilon.
\end{eqnarray}
The  gradient cost is bounded by $ 	16 L\Delta \sigma \cdot \epsilon^{-3} +2\sigma^2 \epsilon^{-2} + 4\sigma n_0^{-1} \epsilon^{-1} $ for any choice of $n_0 \in [1,2\sigma/\epsilon]$.  Treating $\Delta$, $L$ and $\sigma$ as positive constants, the stochastic gradient complexity is $\cO(\epsilon^{-3})$.

\end{theorem}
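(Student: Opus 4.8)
The plan is to couple a descent-lemma argument for $f$ with the variance control of the SPIDER estimate and then telescope. Throughout I analyze the variant that uses the stepsize $\eta^k=\min\{\epsilon/(Ln_0\|\vv^k\|),\,1/(2Ln_0)\}$ in the update $\x^{k+1}=\x^k-\eta^k\vv^k$ and returns $\tx$ uniformly at random from $\x^0,\dots,\x^{K-1}$, which is the variant to which the stated in-expectation conclusion applies. First I would record two preliminary facts. (a) Assumption~\ref{assu:main}(ii) implies that $f$ itself has an $L$-Lipschitz gradient: since $\nabla f=\EE[\nabla f_i]$, Jensen's inequality gives $\|\nabla f(\x)-\nabla f(\y)\|\le\EE\|\nabla f_i(\x)-\nabla f_i(\y)\|\le L\|\x-\y\|$, so the descent inequality $f(\y)\le f(\x)+\langle\nabla f(\x),\y-\x\rangle+\tfrac{L}{2}\|\y-\x\|^2$ holds. (b) Lemma~\ref{111} gives $\EE[\|\vv^k-\nabla f(\x^k)\|^2\mid\x_{0:k_0}]\le\epsilon^2$; taking a further expectation over $\x_{0:k_0}$ yields the unconditional bound $\EE\|\vv^k-\nabla f(\x^k)\|^2\le\epsilon^2$, which is all I will use.

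Next I would derive a one-step descent estimate. Applying (a) with $\y=\x^{k+1}=\x^k-\eta^k\vv^k$ and using $-\langle\nabla f(\x^k),\vv^k\rangle\le-\tfrac12\|\vv^k\|^2+\tfrac12\|\vv^k-\nabla f(\x^k)\|^2$ gives $f(\x^{k+1})-f(\x^k)\le-\tfrac{\eta^k}{2}\|\vv^k\|^2(1-L\eta^k)+\tfrac{\eta^k}{2}\|\vv^k-\nabla f(\x^k)\|^2$. Since $\eta^k\le 1/(2Ln_0)\le 1/(2L)$ we have $1-L\eta^k\ge\tfrac12$, and since $\eta^k\le 1/(2Ln_0)$ the error term is at most $\tfrac{1}{4Ln_0}\|\vv^k-\nabla f(\x^k)\|^2$; moreover the explicit formula for $\eta^k$ gives the identity $\eta^k\|\vv^k\|^2=\tfrac{1}{2Ln_0}\|\vv^k\|\min\{2\epsilon,\|\vv^k\|\}$. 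Combining these, $f(\x^{k+1})-f(\x^k)\le-\tfrac{1}{8Ln_0}\|\vv^k\|\min\{2\epsilon,\|\vv^k\|\}+\tfrac{1}{4Ln_0}\|\vv^k-\nabla f(\x^k)\|^2$. The subtlety here is that $\eta^k$ depends on the random $\vv^k$, and the $\min\{2\epsilon,\|\vv^k\|\}$ identity is exactly what lets me avoid a messy case split and keep one clean inequality.

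Then I would telescope. Summing over $k=0,\dots,K-1$, using $f(\x^0)-f^*=\Delta$ and $f(\x^K)\ge f^*$, taking expectations, and inserting (b) yields $\sum_{k=0}^{K-1}\EE[\|\vv^k\|\min\{2\epsilon,\|\vv^k\|\}]\le 8Ln_0\Delta+2K\epsilon^2$. The choice $K=\lfloor 4L\Delta n_0\epsilon^{-2}\rfloor+1\ge 4L\Delta n_0\epsilon^{-2}$ makes $8Ln_0\Delta\le 2K\epsilon^2$, so $\tfrac1K\sum_k\EE[\|\vv^k\|\min\{2\epsilon,\|\vv^k\|\}]\le 4\epsilon^2$. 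Combining with the elementary pointwise inequality $\|\vv^k\|\min\{2\epsilon,\|\vv^k\|\}\ge 2\epsilon\,(\|\vv^k\|-2\epsilon)_+$ gives $\tfrac1K\sum_k\EE[(\|\vv^k\|-2\epsilon)_+]\le 2\epsilon$. Finally, from $\|\vv^k\|\le 2\epsilon+(\|\vv^k\|-2\epsilon)_+$ and the triangle inequality, $\|\nabla f(\x^k)\|\le 2\epsilon+(\|\vv^k\|-2\epsilon)_+ +\|\vv^k-\nabla f(\x^k)\|$; averaging over $k$ and using (b) once more (Jensen gives $\EE\|\vv^k-\nabla f(\x^k)\|\le\epsilon$) yields $\EE\|\nabla f(\tx)\|\le 2\epsilon+2\epsilon+\epsilon=5\epsilon$.

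The gradient-cost bound is then a direct accounting: among the iterations the epoch-start ones (about $K/q+1$ of them) each draw $S_1$ samples and the rest each draw $S_2$ samples, for a total of about $(K/q+1)S_1+KS_2$; substituting $q=\sigma n_0/\epsilon$, $S_1=2\sigma^2\epsilon^{-2}$, $S_2=2\sigma/(\epsilon n_0)$ and $K\le 4L\Delta n_0\epsilon^{-2}+1$ gives $8L\Delta\sigma\epsilon^{-3}$ from each of the two groups, plus the lower-order terms $2\sigma^2\epsilon^{-2}$ (the extra epoch start) and $4\sigma n_0^{-1}\epsilon^{-1}$, uniformly over $n_0\in[1,2\sigma/\epsilon]$, which is the claimed $16L\Delta\sigma\epsilon^{-3}+2\sigma^2\epsilon^{-2}+4\sigma n_0^{-1}\epsilon^{-1}$; treating $\Delta,L,\sigma$ as constants this is $\cO(\epsilon^{-3})$. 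I expect the only real obstacle to be the bookkeeping — carefully passing from the epoch-conditional variance bound of Lemma~\ref{111} to an unconditional one, and handling the $\vv^k$-dependence of the stepsize via the $\min$-identity above; once those are in place the rest is routine telescoping plus the elementary inequalities needed to pin down the constant $5$.
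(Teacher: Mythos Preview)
Your proposal is correct and follows essentially the same route as the paper: the $L$-smooth descent lemma, the variance bound $\EE\|\vv^k-\nabla f(\x^k)\|^2\le\epsilon^2$ from Lemma~\ref{111}, the identity $\eta^k\|\vv^k\|^2=\tfrac{1}{2Ln_0}\|\vv^k\|\min\{2\epsilon,\|\vv^k\|\}$, telescoping, and the same gradient-cost accounting. The only cosmetic difference is that the paper lower-bounds $\min\{2\epsilon\|\vv^k\|,\|\vv^k\|^2\}$ by $2\epsilon\|\vv^k\|-4\epsilon^2$ (via $\min(|x|,x^2/2)\ge|x|-2$) to get $\tfrac1K\sum_k\EE\|\vv^k\|\le4\epsilon$ directly, whereas you use the equivalent form $2\epsilon(\|\vv^k\|-2\epsilon)_+$ and then $\|\vv^k\|\le2\epsilon+(\|\vv^k\|-2\epsilon)_+$; both yield the same $5\epsilon$.
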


The relatively reduced minibatch size serves as the key ingredient for the superior performance of \SPIDER-SFO. 
For illustrations, let us compare the sampling efficiency among SGD, SCSG and \SPIDER-SFO\ in their special cases.
With some involved analysis of these algorithms, we can conclude that to ensure a sufficient function value decrease of $\Omega(\epsilon^2/L)$ at each iteration,

\begin{enumerate}[(i)] 
\item
for SGD the choice of mini-batch size is $\cO\big( \sigma^2 \cdot  \epsilon^{-2}\big)$; 

\item
for SCSG \citep{lei2017non} and Natasha2 \citep{allen2017natasha2} the mini-batch size is $\cO\big( \sigma \cdot  \epsilon^{-1.333} \big)$;

\item
for our \SPIDER-SFO only needs a reduced mini-batch size of $\cO\big( \sigma \cdot \epsilon^{-1} \big)$
\end{enumerate}

Turning to the finite-sum case, analogous to the on-line case we let
\begin{eqnarray}\label{inputTwo}
S_2 = \frac{n^{1/2}}{n_0}
,\qquad
\eta = \frac{\epsilon}{Ln_0}
,\qquad
\eta^k =  \min \left( \frac{\epsilon}{Ln_0\|\vv^k\|}, \frac{1}{2L n_0} \right)
,\qquad
q = n_0n^{1/2}
,
\end{eqnarray}
where $n_0 \in [1,n^{1/2}]$.  
In this case, one computes the full gradient $\vv^k = \nabla f_{S_1}(\x^k)$ in Line \ref{line3} of Algorithm \ref{algo:SPIDER-SFO}.
We conclude our second upper-bound result:

\begin{theorem}[First-Order Stationary Point, finite-sum setting]\label{theo:FSOtwo}
In the finite-sum case, set the parameters $\cS_2$, $\eta$, and  $q$ as in \eqref{inputTwo}, $K = \left\lfloor (4L \Delta n_0)\epsilon^{-2} \right\rfloor+1$ and let $S_1 = [n]$, i.e.~we obtain the full gradient in Line \ref{line3}.
The  gradient cost is bounded by $n+	8(L\Delta) \cdot n^{1/2} \epsilon^{-2}+ 2n_0^{-1} n^{1/2} $ for any choice of $n_0 \in [1,n^{1/2}]$. 
Treating $\Delta$, $L$ and $\sigma$ as positive constants, the stochastic gradient complexity is $\cO(n+ n^{1/2}\epsilon^{-2})$. 
\end{theorem}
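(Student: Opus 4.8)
My plan is to follow the proof of Theorem~\ref{theo:FSOone} almost verbatim, exploiting that in the finite-sum setting each epoch starts from an \emph{exact} full gradient rather than a variance-bounded estimate, which only makes the analysis simpler. The first step is to establish the finite-sum analogue of Lemma~\ref{111}. Fix an iteration $k$ and let $k_0=\lfloor k/q\rfloor\cdot q$; since $S_1=[n]$, Line~\ref{line3} gives $\vv^{k_0}=\nabla f(\x^{k_0})$ exactly, so the initialisation error $\EE\|\hV^0-\hB(\x^0)\|^2$ appearing in Lemma~\ref{lemm:aggregate} is $0$. Taking $\hB_i=\nabla f_i$ (so $L_{\hB}=L$ by Assumption~\ref{assu:main}(ii)) and noting that both OPTION~I and OPTION~II of Algorithm~\ref{algo:SPIDER-SFO} force $\|\x^{j+1}-\x^j\|\le\eta=\epsilon/(Ln_0)$, I would apply Lemma~\ref{lemm:aggregate} with $\cS_*=S_2=n^{1/2}/n_0$ and $k-k_0<q=n_0n^{1/2}$ to get
\[
\EE\!\left[\,\|\vv^k-\nabla f(\x^k)\|^2 \mid \x_{0:k_0}\,\right]\ \le\ \frac{q\,L^2\eta^2}{S_2}\ =\ \frac{(n_0 n^{1/2})\,L^2\big(\epsilon/(Ln_0)\big)^2}{n^{1/2}/n_0}\ =\ \epsilon^2,
\]
which is exactly Lemma~\ref{111} restated for the finite-sum parameters \eqref{inputTwo}.

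Next I would run the descent argument, which is identical to that of Theorem~\ref{theo:FSOone}. Averaged $L$-Lipschitzness of the $f_i$'s implies, by Jensen, that $f$ itself has an $L$-Lipschitz gradient, so $f(\x^{k+1})\le f(\x^k)-\eta^k\langle\nabla f(\x^k),\vv^k\rangle+\tfrac{L(\eta^k)^2}{2}\|\vv^k\|^2$; I would then write $\langle\nabla f(\x^k),\vv^k\rangle=\tfrac12\big(\|\nabla f(\x^k)\|^2+\|\vv^k\|^2-\|\vv^k-\nabla f(\x^k)\|^2\big)$ and use the clipped stepsize $\eta^k=\min(\epsilon/(Ln_0\|\vv^k\|),1/(2Ln_0))$ --- which, since $n_0\ge1$, makes the $\|\vv^k\|^2$ contribution non-positive --- to obtain a per-step decrease of order $\tfrac1{Ln_0}\big(\|\nabla f(\x^k)\|^2\wedge\epsilon\|\nabla f(\x^k)\|\big)$ minus an $\cO(\epsilon^2/(Ln_0))$ error controlled by the display above. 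Telescoping over $k=0,\dots,K-1$, taking total expectation via the tower property over the per-epoch conditioning, and substituting $f(\x^0)-f^*=\Delta$ and $K=\lfloor 4L\Delta n_0\epsilon^{-2}\rfloor+1$ would yield the first-order stationarity guarantee $\EE\|\nabla f(\tx)\|\le 5\epsilon$ for OPTION~II; for OPTION~I I would instead invoke the high-probability bound of Proposition~\ref{prop:aggregate}(ii) (with $\hat Q=\nabla f$) together with the stopping rule $\|\vv^k\|\le 2\cep$.

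Finally, the gradient cost is pure bookkeeping: of the $K$ iterations, at most $\lceil K/q\rceil\le K/q+1$ trigger the full-gradient branch at cost $n$ each, contributing $\le nK/q+n=4L\Delta n^{1/2}\epsilon^{-2}+\cO(n^{1/2}/n_0)+n$, while each of the remaining $\le K$ iterations costs $2S_2=2n^{1/2}/n_0$, contributing $\le 8L\Delta n^{1/2}\epsilon^{-2}+2n^{1/2}/n_0$; combining and folding the constants gives the stated bound $n+8(L\Delta)n^{1/2}\epsilon^{-2}+2n_0^{-1}n^{1/2}$, hence $\cO(n+n^{1/2}\epsilon^{-2})$ once $\Delta,L,\sigma$ are treated as constants.

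The only step I expect to be more than routine is the descent estimate with the normalized/clipped stepsize: one must separate the regime $\|\vv^k\|\ge 2\epsilon$ --- where $\eta^k=\epsilon/(Ln_0\|\vv^k\|)$, the update behaves like normalized gradient descent, and the $\epsilon^2$ SPIDER error is dominated by the guaranteed decrease --- from the regime $\|\vv^k\|<2\epsilon$, where the $\cO(\epsilon)$ SPIDER accuracy already forces $\|\nabla f(\x^k)\|=\cO(\epsilon)$; and one must carry the per-epoch conditional bound of the first display through the telescoped sum so that the accumulated error stays $\cO(K\epsilon^2)$ rather than $\cO(Kq\,\epsilon^2)$. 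This is exactly the bookkeeping already done for Theorem~\ref{theo:FSOone} and requires no new idea.
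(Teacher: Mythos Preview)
Your proposal is correct and follows essentially the same route as the paper: verify the finite-sum analogue of Lemma~\ref{111} with zero initialisation error (since $\vv^{k_0}=\nabla f(\x^{k_0})$ exactly), reuse the descent lemma and telescoping from the on-line proof, and finish with the same gradient-cost bookkeeping $\lceil K/q\rceil S_1+K S_2\le 2KS_2+n$. The only cosmetic difference is your polarisation identity for $\langle\nabla f(\x^k),\vv^k\rangle$; the paper instead expands $-\eta^k\langle\nabla f(\x^k),\vv^k\rangle=-\eta^k\|\vv^k\|^2-\eta^k\langle\nabla f(\x^k)-\vv^k,\vv^k\rangle$ and applies Cauchy--Schwarz, obtaining the per-step decrease in terms of $\|\vv^k\|$ (Lemma~\ref{444}) and only afterwards converting to $\|\nabla f(\x^k)\|$ via $\EE\|\nabla f(\x^k)\|\le\EE\|\vv^k\|+\epsilon$ --- this sidesteps the awkwardness that in your decomposition the coefficient $\eta^k$ multiplying $\|\nabla f(\x^k)\|^2$ still depends on $\|\vv^k\|$, which is precisely the regime-splitting you flag as ``more than routine.''
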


\paragraph{Lower Bound for Finding First-Order Stationary Points}
To conclude the optimality of our algorithm we need an algorithmic lower bound result \citep{carmon2017lower,woodworth2016tight}.
Consider the finite-sum case and any  random algorithm $\cA$ that maps functions $f: \RR^d \to \RR$ to a sequence of iterates in $\RR^{d+1}$, with 
\begin{eqnarray}\label{algor-need}
[\x^k; i_k]
 = 
\cA^{k-1} \big(\bxi, \nabla f_{i_0}(\x^0), \nabla f_{i_1}(\x^1), \ldots, \nabla f_{i_{k-1}}(\x^{k-1})   \big)
,\quad k\geq 1,
\end{eqnarray}
where $\cA^{k}$  are measure mapping into $\RR^{d+1}$, $i_k$ is the individual function chosen by $\cA$ at iteration $k$, and $\bxi$ is uniform random vector from $[0,1]$.  And  $[\x^0; i_0] = \cA^0(\bxi)$,  where  $\cA^0$ is a  measure mapping.
The lower-bound result for solving \eqref{opt_eq} is stated as follows:

\begin{theorem}[Lower bound for SFO for the finite-sum setting]
\label{theo:lowerbdd}
	For any $L>0$, $\Delta >0$, and \blue{$2\leq n \leq 	O\left( \Delta^2 L^2\cdot \epsilon^{-4} \right)$}, for any algorithm $\cA$ satisfying \eqref{algor-need}, there exists a dimension 
$
d = \tilde\cO\big(
 \Delta^2 L^2 \cdot n^2\epsilon^{-4}
\big)
, 
$
and a function $f$ satisfies Assumption \ref{assu:main} in the finite-sum case,  such that in order to find a point $\tx$ for which \blue{$\|\nabla f(\tx)\|\leq \epsilon$}, $\cA$ must cost at least $
\Omega\big(L \Delta \cdot n^{1/2}\epsilon^{-2}\big)
$ stochastic gradient accesses.
\end{theorem}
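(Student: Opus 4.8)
The plan is to reduce Theorem~\ref{theo:lowerbdd} to the robust ``zero-chain'' hard instance of \citet{carmon2017lower}, split across the $n$ components on disjoint coordinate blocks. Recall that \citet{carmon2017lower} construct, for every integer $T$, a ``chain'' function $\bar f_T$ and its randomly rotated robust version $f_{T,\UU}(\x)=\bar f_T(\UU^\top\x)$ with $\UU\in\RR^{D\times T}$ having orthonormal columns, enjoying: (a) a \emph{progress} property --- with high probability over $\UU$, any algorithm of the type \eqref{algor-need} cannot advance its ``progress along the chain'' by more than one unit per gradient query; (b) $\|\nabla f_{T,\UU}(\x)\|$ is bounded below by an absolute constant $\gamma_0$ as long as the chain is not fully traversed; and (c) the quantities $f(\x^0)-\inf f$, the gradient-Lipschitz constant and the Hessian-Lipschitz constant scale as $\alpha T$, $\alpha/\beta^2$, $\alpha/\beta^3$ respectively under the rescaling $\x\mapsto\alpha\,\bar f_T(\x/\beta)$, while $\|\nabla\|\gtrsim \alpha\gamma_0/\beta$ off the fully traversed set, with $D=\tilde\cO(T^2)$. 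I would then set $f(\x)=\tfrac1n\sum_{i=1}^n f_i(\x)$, where each $f_i$ is an independently rotated, rescaled copy of $\bar f_{T}$ supported on the $i$-th of $n$ disjoint coordinate blocks, and choose the chain length $T=\Theta\!\bigl(\Delta L\,n^{-1/2}\epsilon^{-2}\bigr)$; the hypothesis $n\le O(\Delta^2L^2\epsilon^{-4})$ is exactly what guarantees $T\ge 1$, so the construction is non-degenerate.

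Next I would fix the two scaling parameters $\alpha,\beta$ of each $f_i$ so that (i) each $f_i(\x^0)-\inf f_i=\Delta$, hence $f(\x^0)-\inf f=\tfrac1n\sum_i\bigl(f_i(\x^0)-\inf f_i\bigr)=\Delta$ by disjointness of supports, and (ii) each $f_i$ has gradient-Lipschitz constant at most $\sqrt n\,L$. The crucial observation is that, because the blocks are disjoint, Assumption~\ref{assu:main}(ii) for $f$ amounts to $\Ei\|\nabla f_i(\x)-\nabla f_i(\y)\|^2=\tfrac1n\sum_i\|\nabla f_i(\x)-\nabla f_i(\y)\|^2\le \tfrac{(\sqrt n\,L)^2}{n}\|\x-\y\|^2=L^2\|\x-\y\|^2$; i.e.\ the \emph{averaged} Lipschitz assumption is met with the prescribed constant $L$ even though each component is permitted to be $\sqrt n$ times rougher --- this slack is precisely what produces the $\sqrt n$ factor in the final bound. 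With $T$ and $(\alpha,\beta)$ as above, property (b) rescales to: $\|\nabla f_i(\x)\|\ge \sqrt{2n}\,\epsilon$ whenever the $i$-th chain is not fully traversed at $\x$.

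Then the disjointness of supports gives $\|\nabla f(\x)\|^2=\tfrac1{n^2}\sum_{i=1}^n\|\nabla f_i(\x)\|^2\ge \tfrac{2\epsilon^2}{n}\,m(\x)$, where $m(\x)$ is the number of chains not fully traversed at $\x$; hence $\|\nabla f(\tx)\|\le\epsilon$ forces $m(\tx)\le n/2$, i.e.\ at least $n/2$ chains must be fully traversed. Since $\nabla f_j$ vanishes on block $i$ whenever $j\ne i$, only gradient queries to component $i$ reveal any information about the $i$-th chain, so by property (a) applied to that block, with high probability the algorithm needs $\Omega(T)$ queries \emph{to component $i$} before the $i$-th chain can be fully traversed. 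Summing over the $\ge n/2$ chains that must be traversed, and using that queries to distinct components are distinct, the total gradient cost is at least $(n/2)\cdot\Omega(T)=\Omega\!\bigl(\sqrt n\,\Delta L\,\epsilon^{-2}\bigr)$, after a union bound over the $n$ blocks to absorb the rotation-failure events. Finally, $d=\sum_i D_i$; since an adversarial algorithm may aim all $\Omega(\sqrt n\,\Delta L\epsilon^{-2})$ of its queries at one component, each block must be robustified with $D_i=\tilde\cO\!\bigl((\sqrt n\,\Delta L\epsilon^{-2})^2\bigr)=\tilde\cO(n\,\Delta^2L^2\epsilon^{-4})$ coordinates, so $d=\tilde\cO(n^2\,\Delta^2L^2\epsilon^{-4})$, as claimed.

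The main obstacle is establishing property~(a) against \emph{arbitrary randomized} algorithms satisfying \eqref{algor-need}: the bare zero-chain property of $\bar f_T$ is not enough, since nothing prevents an algorithm from simply outputting a point aligned with a late chain direction. One needs the by-now-standard but delicate argument (cf.\ \citet{carmon2017lower,woodworth2016tight}) that, in dimension $D\gg T$, the columns of a Haar-random $\UU$ are so close to mutually orthogonal that no query point --- being a measurable function of the directions already revealed --- can have non-negligible correlation with an as-yet-undiscovered chain direction; this requires concentration of inner products of random unit vectors together with a union bound over all $\Omega(\sqrt n\,\Delta L\epsilon^{-2})$ queries and over all $n$ blocks. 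Beyond this, the remaining work is careful bookkeeping: verifying that the choices of $T,\alpha,\beta$ make Assumption~\ref{assu:main} hold \emph{exactly} with the stated $L$ and $\Delta$, and that $T=\Omega(1)$ throughout the admissible range $2\le n\le O(\Delta^2L^2\epsilon^{-4})$.
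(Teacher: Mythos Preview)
Your proposal is correct and follows essentially the same approach as the paper: embedding $n$ rescaled copies of the \citet{carmon2017lower} robust zero-chain on disjoint coordinate blocks, exploiting the \emph{averaged} Lipschitz hypothesis of Assumption~\ref{assu:main}(ii) to allow each component a $\sqrt n$-larger Lipschitz constant, and then counting that at least half the chains must be fully traversed before $\|\nabla f\|\le\epsilon$. The paper's construction differs only in minor technicalities --- it draws the $nK$ chain directions jointly orthonormal in a common $d/n$-dimensional space rather than independently per block, and packages the rotation-robustness argument as a single lemma that handles the adaptive choice of which component to query --- but your outline captures all the essential ideas, including the correct identification of the delicate step (the random-rotation argument against adaptive randomized algorithms) and the correct dimension count.
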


Note the condition $n\leq \cO(\epsilon^{-4})$ in Theorem \ref{theo:lowerbdd} ensures that our lower bound $\Omega(n^{1/2}\epsilon^{-2}) = \Omega(n+n^{1/2}\epsilon^{-2})$, and hence our upper bound in Theorem \ref{theo:FSOone} matches the lower bound in Theorem \ref{theo:lowerbdd} up to a constant factor of relevant parameters, and is hence \textit{near-optimal}.
Inspired by \citet{carmon2017lower}, our proof of Theorem \ref{theo:lowerbdd} utilizes a specific counterexample function that requires at least $\Omega(n^{1/2}\epsilon^{-2})$ stochastic gradient accesses.
Note \citet{carmon2017lower}  analyzed such counterexample in the deterministic case $n=1$ and we generalize such analysis to the finite-sum case $n\ge 1$.

\begin{remark}
Note by setting $n = \cO(\epsilon^{-4})$ the lower bound complexity in Theorem \ref{theo:lowerbdd} can be as large as $\Omega(\epsilon^{-4})$.
We emphasize that this does \textit{not} violate the $ \cO(\epsilon^{-3})$ upper bound in the on-line case [Theorem \ref{theo:FSOone}], since the counterexample established in the lower bound depends \textit{not} on  the stochastic gradient variance  $\sigma^2$ specified in Assumption \ref{assu:main}(\ref{a3}), but on  the component number $n$.
To obtain the lower bound result for the on-line case with the additional Assumption \ref{assu:main}(\ref{a3}), with more efforts one might be able to construct a second counterexample that requires $\Omega(\ep^{-3})$ stochastic gradient accesses with the knowledge of $\sigma$ instead of $n$. 
We leave this as a future work. 
\end{remark}

\paragraph{Upper Bound for Finding First-Order Stationary Points, in High-Probability}
We consider obtaining high-probability results.
With Theorem \ref{theo:FSOone} and Theorem \ref{theo:FSOtwo} in hand, by Markov Inequality, we have  $ \| \nabla f(\tx) \|\leq 15\epsilon$  with probability $\frac{2}{3}$.   Thus a straightforward way to obtain a high probability result is by  adding an additional verification step in the end  of Algorithm \ref{algo:SPIDER-SFO},  in which we  check  whether $\tx$ satisfies $ \| \nabla f(\tx) \|\leq 15\epsilon$ (for the on-line case  when $\nabla f(\tx)$ are  unaccessible, under Assumption \ref{assu:main2}~(iii'), we can draw $\tilde{O}(\epsilon^{-2})$ samples to estimate $\| \nabla f(\tx) \|$ in high accuracy). If not, we can  restart Algorithm \ref{algo:SPIDER-SFO} (at most  in $O(\log (1/p))$ times) until it find a desired solution. However, because the above way needs  running Algorithm \ref{algo:SPIDER-SFO} in multiple times, in the following, we show  with   Assumption \ref{assu:main2} (including (\ref{a4})),  original Algorithm \ref{algo:SPIDER-SFO} obtains a  solution with an additional polylogarithmic factor under high probability. 
\begin{theorem}[First-Order Stationary Point, on-line setting, high probability]\label{theo:FSOonehp}
	For the on-line case, set the parameters $S_1$, $S_2$, $\eta$ and $q$ in \eqref{inputOne}.  Set $\cep = 10\epsilon \log\left( \left(4\lfloor 4L \Delta n_0\epsilon^{-2}\rfloor+12\right)p^{-1}  \right) \sim \tilde{\cO}(\epsilon)$.	Then  under the Assumption \ref{assu:main2} (including (ii')), with  probability at least $1-p$, Algorithm \ref{algo:SPIDER-SFO} terminates before $K_0 = 	\lfloor  (4L\Delta n_0)\epsilon^{-2}\rfloor +2$ iterations and outputs an $\x^{\cK}$ satisfying
	\begin{eqnarray}\label{FSPonline}
	\|\vv^{\cK}\|\leq 2\cep
	\quad\text{and}\quad
	\| \nabla f(\x^{\cK})   \|\leq 3\cep.
	\end{eqnarray}
	The gradient costs to find a FSP satisfying \eqref{FSPonline} with probability $1-p$ are bounded by   $16 L\Delta \sigma \cdot \epsilon^{-3} +2\sigma^2\epsilon^{-2}+ 8\sigma n_0^{-1} \epsilon^{-1}  $ for any choice of of $n_0 \in [1,2\sigma/\epsilon]$.  Treating $\Delta$, $L$ and $\sigma$ as constants, the stochastic gradient complexity is $\tilde{\cO}(\epsilon^{-3})$.
\end{theorem}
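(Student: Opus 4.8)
The plan is to isolate a single high-probability event on which the \SPIDER\ estimate $\vv^k$ stays within $\cep$ of the true gradient $\nabla f(\x^k)$ at every iteration up to $K_0$, and then run a purely deterministic descent/termination argument on top of it. The one structural point worth noting up front is that Assumption~\ref{assu:main2} (pointwise, rather than in-expectation, Lipschitz and variance bounds) is present precisely to supply the almost-sure increment bounds that the concentration step requires; under Assumption~\ref{assu:main} alone one only gets the second-moment bound of Lemma~\ref{111}, which is enough for the in-expectation Theorems~\ref{theo:FSOone}--\ref{theo:FSOtwo} but not for a high-probability statement.

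First I would establish the concentration claim. Fix an iteration $k$ and let $k_0=\lfloor k/q\rfloor q$ be the start of its epoch; telescoping the two recursions in Algorithm~\ref{algo:SPIDER-SFO} shows that $\vv^k-\nabla f(\x^k)$ equals the batch error $\nabla f_{\cS_1}(\x^{k_0})-\nabla f(\x^{k_0})$ plus the sum over $t=k_0+1,\dots,k$ of the centered increments $[\nabla f_{\cS_2}(\x^t)-\nabla f_{\cS_2}(\x^{t-1})]-[\nabla f(\x^t)-\nabla f(\x^{t-1})]$. Each of these is itself an average of conditionally mean-zero per-sample terms: a batch term $\tfrac1{S_1}(\nabla f_j(\x^{k_0})-\nabla f(\x^{k_0}))$ of norm $\le\sigma/S_1$ almost surely by Assumption~\ref{assu:main2}(iii'), and an increment term $\tfrac1{S_2}([\nabla f_i(\x^t)-\nabla f_i(\x^{t-1})]-[\nabla f(\x^t)-\nabla f(\x^{t-1})])$ of norm $\le 2L\eta/S_2$ almost surely by Assumption~\ref{assu:main2}(ii') together with $\|\x^t-\x^{t-1}\|=\eta$, which holds at every non-returning step under OPTION~I. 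Applying Proposition~\ref{prop:aggregate}(ii) to the martingale formed by all of these per-sample increments (with zero initial error, so $b_0=0$) gives, with probability at least $1-4\gamma$,
\[
\|\vv^k-\nabla f(\x^k)\|\ \le\ 2\sqrt{\Big(\frac{\sigma^2}{S_1}+\frac{4qL^2\eta^2}{S_2}\Big)\log\frac{1}{\gamma}}\,.
\]
With the parameter choices \eqref{inputOne} the bracket equals $\tfrac12\epsilon^2+2\epsilon^2=\tfrac52\epsilon^2$ --- the epoch length $q=\sigma n_0/\epsilon$ is calibrated exactly so that the accumulated within-epoch variance stays $\cO(\epsilon^2)$ --- so the right-hand side is at most $4\epsilon\sqrt{\log(1/\gamma)}$. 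A union bound over $k=0,\dots,K_0$ with $\gamma=p/(4(K_0+1))$, noting that $4(K_0+1)=4\lfloor 4L\Delta n_0\epsilon^{-2}\rfloor+12$ and hence $\log(1/\gamma)=\cep/(10\epsilon)\ge 1$, produces an event $\mathcal E$ of probability at least $1-p$ on which $\|\vv^k-\nabla f(\x^k)\|\le 4\epsilon\sqrt{\log(1/\gamma)}\le 10\epsilon\log(1/\gamma)=\cep$ for all $k\le K_0$.

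Next, on $\mathcal E$ I would run the termination argument. If iteration $k<K_0$ is reached without returning then $\|\vv^k\|>2\cep$, so $\langle\nabla f(\x^k),\vv^k\rangle\ge\|\vv^k\|(\|\vv^k\|-\cep)$, and feeding $\x^{k+1}=\x^k-\eta\,\vv^k/\|\vv^k\|$ (with $\|\x^{k+1}-\x^k\|=\eta$) into the descent lemma for the $L$-smooth function $f$ --- $f$ inherits an $L$-Lipschitz gradient from Assumption~\ref{assu:main}(ii) via Jensen --- yields $f(\x^{k+1})\le f(\x^k)-\eta(\cep-L\eta/2)$, a strict decrease of order $\Omega(\epsilon^2/(Ln_0))$ because $\cep\gg L\eta/2=\epsilon/(2n_0)$. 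Summing these decreases against the budget $\Delta$ forces the number of non-returning iterations to be strictly below $K_0$, so the algorithm returns at some $\cK<K_0$; the return was triggered by $\|\vv^{\cK}\|\le 2\cep$, and on $\mathcal E$ we then get $\|\nabla f(\x^{\cK})\|\le\|\vv^{\cK}\|+\|\vv^{\cK}-\nabla f(\x^{\cK})\|\le 3\cep$, which is \eqref{FSPonline}. The gradient cost is then a routine tally: at most $K_0/q+1$ batch steps costing $S_1$ each, plus at most $K_0$ increment steps costing $2S_2$ each, which with \eqref{inputOne} sums to $16L\Delta\sigma\epsilon^{-3}+2\sigma^2\epsilon^{-2}+8\sigma n_0^{-1}\epsilon^{-1}=\tilde\cO(\epsilon^{-3})$.

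The main obstacle is the concentration step, and concretely it is resisting the naive route of applying Proposition~\ref{prop:aggregate}(ii) with the $\bxi_k$'s taken to be the \emph{mini-batch-averaged} increments: the only almost-sure bound on $\|\nabla f_{\cS_1}(\x^{k_0})-\nabla f(\x^{k_0})\|$ is $\sigma$, which is far too large to survive the square root. One is forced to descend to the level of individual sampled gradients so that the per-term bounds carry the decisive $1/S_1$ and $1/S_2$ factors, and then to check that $q,S_1,S_2,\eta$ are mutually tuned so that $\sigma^2/S_1+4qL^2\eta^2/S_2=\cO(\epsilon^2)$ over a whole epoch. Once this is in place, the descent inequality and the cost count are essentially bookkeeping, and the only care needed is the off-by-one accounting in $K_0$ and the choice of $\gamma$ so that the union-bound factor lands inside the logarithm defining $\cep$.
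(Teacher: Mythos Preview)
Your proposal is correct and follows essentially the same route as the paper: both isolate a high-probability event (via Proposition~\ref{proh}/Proposition~\ref{prop:aggregate}(ii) applied to the per-sample martingale increments, exactly to capture the $1/S_1$ and $1/S_2$ factors you emphasize) on which the \SPIDER\ error stays small uniformly over $k\le K_0$, then run a deterministic descent argument to force termination, and finish with the same cost tally. The only cosmetic differences are that the paper states the concentration as $\|\vv^k-\nabla f(\x^k)\|^2\le\epsilon\cdot\cep$ (slightly sharper than your $\le\cep$ but leading to the same $3\cep$ conclusion) and organizes the descent via Lemma~\ref{h444} rather than your inner-product bound $\langle\nabla f(\x^k),\vv^k\rangle\ge\|\vv^k\|(\|\vv^k\|-\cep)$; both yield a per-step decrease of order $\epsilon\cep/(Ln_0)$.
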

\begin{theorem}[First-Order Stationary Point, finite-sum setting]\label{theo:FSOtwohp}
	In the finite-sum case, set the parameters $S_1$, $S_2$, $\eta$, and $q$ as \eqref{inputTwo}.    let $S_1 = [n]$, i.e.~we obtain the full gradient in Line \ref{line3}. 	Then under the Assumption \ref{assu:main2} (including (ii')), 	with  probability at least $1-p$, Algorithm \ref{algo:SPIDER-SFO} terminates before $K_0 = 	\lfloor 4L\Delta n_0/\epsilon^2\rfloor +2$ iterations and outputs an $\x^{\cK}$ satisfying
	\begin{eqnarray}\label{FSPoffline}
	\|\vv^{\cK}\|\leq 2\cep
	\quad\text{and}\quad
	\| \nabla f(\x^{\cK})   \|\leq 3\cep
	.
	\end{eqnarray}
	where $\cep = 16\epsilon \log\left(\left(4(L\Delta n_0 \epsilon^{-2} +12\right)p^{-1}\right)=\tilde{\cO}(\epsilon)$. So the gradient costs to find a FSP  satisfying \eqref{FSPoffline} with probability $1-p$  are bounded by   $n+	8L\Delta  n^{1/2} \epsilon^{-2}+(2n_0^{-1})n^{1/2}+ 4 n_0^{-1} n^{1/2}$ with any choice of  $n_0 \in [1,n^{1/2}]$.  Treating $\Delta$, $L$ and $\sigma$ as constants, the stochastic gradient complexity is $\tilde{\cO}(n+ n^{1/2}\epsilon^{-2})$.
	
\end{theorem}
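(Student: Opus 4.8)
The argument closely parallels the in-expectation finite-sum bound (Theorem \ref{theo:FSOtwo}) and the on-line high-probability bound (Theorem \ref{theo:FSOonehp}); the simplification here is that each epoch of Algorithm \ref{algo:SPIDER-SFO} is opened with the \emph{exact} full gradient $\vv^{k_0}=\nabla f(\x^{k_0})$ (since $S_1=[n]$), so the SPIDER estimator carries no initialization error. I would split the proof into three parts: (a) a high-probability bound $\|\vv^k-\nabla f(\x^k)\|\le\cep$ that holds uniformly over all iterations actually reached; (b) a per-step function decrease that forces termination before $K_0$ and, combined with (a), yields \eqref{FSPoffline}; (c) a bookkeeping of gradient accesses.

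\emph{Part (a): uniform control of the SPIDER error.} Fix an epoch with start $k_0=\lfloor k/q\rfloor q$. Under OPTION I every non-terminating step is the normalized update $\x^{s+1}=\x^s-\eta\,\vv^s/\|\vv^s\|$, so $\|\x^s-\x^{s-1}\|=\eta$ holds \emph{deterministically} for all $s$ up to the (random) stopping index. Since $\vv^{k_0}=\nabla f(\x^{k_0})$, we have $\vv^k-\nabla f(\x^k)=\sum_{s=k_0+1}^k d_s$ with
\[
d_s=\nabla f_{\cS_2}(\x^s)-\nabla f_{\cS_2}(\x^{s-1})-\bigl(\nabla f(\x^s)-\nabla f(\x^{s-1})\bigr),
\]
and each $d_s$ is the average of $S_2$ conditionally mean-zero per-sample increments, each of norm at most $2L\eta/S_2$ by Assumption \ref{assu:main2}~(ii'). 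Regarding $\vv^k-\nabla f(\x^k)$ as a Hilbert-space martingale indexed by individual samples, it has $qS_2=n$ increments, each of squared norm at most $(2L\eta/S_2)^2$, so the total sum of squared increment bounds is $qS_2\cdot(2L\eta/S_2)^2=4qL^2\eta^2/S_2=4\epsilon^2$ under the choices \eqref{inputTwo} — and, crucially, this is independent of the free parameter $n_0$. (The naive per-outer-step bound $\|d_s\|\le 2L\eta$ would instead give $\sum b_s^2=\Theta(\epsilon^2 n^{1/2}/n_0)$, which is $\tilde{\cO}(\epsilon^2)$ only when $n_0\asymp n^{1/2}$.) Applying the Azuma--Hoeffding--type (Pinelis) inequality behind Proposition \ref{prop:aggregate}(ii) gives $\|\vv^k-\nabla f(\x^k)\|\le 2\sqrt{4\epsilon^2\log(1/\gamma)}$ with probability at least $1-4\gamma$ for each fixed $k$; choosing $\gamma\asymp p/K_0$ and taking a union bound over $k=0,\dots,K_0$ (intersecting, for each $k$, the concentration event with the event that the algorithm has not yet stopped, on which the fixed-norm-step property used above holds) absorbs the logarithmic overhead into $\cep=16\epsilon\log\bigl((4(L\Delta n_0\epsilon^{-2})+12)p^{-1}\bigr)=\tilde{\cO}(\epsilon)$. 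Hence on an event $\mathcal{E}$ with $\PP(\mathcal{E})\ge 1-p$ we have $\|\vv^k-\nabla f(\x^k)\|\le\cep$ at every iteration reached before termination.

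\emph{Parts (b) and (c): descent, termination, cost.} Work on $\mathcal{E}$. If the algorithm has not returned at step $k$ then $\|\vv^k\|>2\cep$, and $L$-smoothness of $f$ with the normalized update gives (by the standard descent argument)
\[
f(\x^{k+1})\le f(\x^k)-\eta\|\vv^k\|+\eta\|\vv^k-\nabla f(\x^k)\|+\tfrac{L\eta^2}{2}\le f(\x^k)-2\eta\cep+\eta\cep+\tfrac{L\eta^2}{2}\le f(\x^k)-\tfrac12\eta\cep,
\]
the last step using $L\eta=\epsilon/n_0\le\cep$; thus each non-terminating step decreases $f$ by at least $\tfrac12\eta\cep\ge\epsilon^2/(2Ln_0)$. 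Since $f$ can decrease in total by at most $\Delta$, there are fewer than $2L\Delta n_0\epsilon^{-2}<K_0$ such steps, so with probability at least $1-p$ the algorithm returns some $\x^{\cK}$ with $\cK<K_0$ and $\|\vv^{\cK}\|\le 2\cep$; on $\mathcal{E}$, $\|\nabla f(\x^{\cK})\|\le\|\vv^{\cK}\|+\|\vv^{\cK}-\nabla f(\x^{\cK})\|\le 3\cep$, which is \eqref{FSPoffline}. For the gradient cost, a full gradient ($n$ accesses) is computed at each of the at most $\lceil K_0/q\rceil$ epoch starts and $2S_2$ accesses are used at each of the at most $K_0$ inner steps; substituting $q=n_0n^{1/2}$, $S_2=n^{1/2}/n_0$, and $K_0=\lfloor 4L\Delta n_0\epsilon^{-2}\rfloor+2$ gives the stated $n+8L\Delta n^{1/2}\epsilon^{-2}+\cO(n_0^{-1}n^{1/2})$, i.e.\ $\tilde{\cO}(n+n^{1/2}\epsilon^{-2})$.

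The crux is Part (a). The in-expectation argument (Lemma \ref{lemm:aggregate}) needs only a second moment, but the high-probability version requires a martingale tail bound whose exponent scales with the sum of squared \emph{a.s.} increment bounds; making that quantity $\tilde{\cO}(\epsilon^2)$ for \emph{every} admissible $n_0\in[1,n^{1/2}]$ — not just $n_0\asymp n^{1/2}$ — forces the sample-wise decomposition of the SPIDER update (equivalently, a Bernstein-type bound driven by the predictable quadratic variation $\sum_s\E[\|d_s\|^2\mid\mathcal{F}_{s-1}]\le 4\epsilon^2$). The remaining care is the union bound over the random stopping index $\cK$, which is handled by intersecting, for each deterministic $k\le K_0$, the concentration event with $\{\cK\ge k\}$, on which all earlier steps have the fixed norm $\eta$ that the a.s.\ increment bounds rely on.
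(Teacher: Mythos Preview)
Your proposal is correct and follows essentially the same route as the paper. In particular, the paper also decomposes $\vv^k-\nabla f(\x^k)$ into the $qS_2$ per-sample martingale increments $\bm\epsilon_{j,i}=\frac{1}{S_2}\big(\nabla f_{\cS_2(i)}(\x^j)-\nabla f_{\cS_2(i)}(\x^{j-1})-\nabla f(\x^j)+\nabla f(\x^{j-1})\big)$, bounds each by $2\epsilon/(S_2 n_0)$ via Assumption~\ref{assu:main2}(ii'), applies the Pinelis inequality (Proposition~\ref{proh}) to get $\|\vv^k-\nabla f(\x^k)\|^2\le\epsilon\cdot\cep$ with probability $1-p/(K_0+1)$, and takes a union bound over $k\le K_0$ after extending past the stopping time by the virtual update $\x^{k+1}=\x^k$; this is exactly your sample-wise decomposition and your ``intersect with $\{\cK\ge k\}$'' device. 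The only cosmetic difference is the descent step: the paper uses the quadratic form $f(\x^{k+1})\le f(\x^k)-\eta^k(\tfrac12-\tfrac{\eta^kL}{2})\|\vv^k\|^2+\tfrac{\eta^k}{2}\|\vv^k-\nabla f(\x^k)\|^2$ with $\eta^k=\eta/\|\vv^k\|$ (Lemma~\ref{h444}), obtaining a per-step drop of $\epsilon\cep/(4Ln_0)$, whereas your linear bound $f(\x^{k+1})\le f(\x^k)-\eta\|\vv^k\|+\eta\|\vv^k-\nabla f(\x^k)\|+\tfrac{L\eta^2}{2}$ gives $\epsilon\cep/(2Ln_0)$; both force termination before $K_0$.
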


\subsection{Second-Order Stationary Point}\label{ssec:SFO_SSP}

To find a second-order stationary point with \eqref{SSP},  we can fuse our \SPIDER-SFO in Algorithm \ref{algo:SPIDER-SFO} with a Negative-Curvature-Search (NC-Search) iteration that solves the following task: 
given a point $\x\in\RR^d$, decide if $\lambda_{\min}(\nabla^2 f(\x)) \geq -\delta $ or find a unit vector $\ww_1$ such that $\ww_1^\top \nabla^2 f(\x) \ww_1 \leq -\delta / 2$ (for numerical reasons, one has to leave some room between the two bounds). 
For the on-line case,  NC-Search can be efficiently solved by Oja's algorithm \citep{oja1982simplified, allen2017natasha2} and also by \NEON\ \citep{allen2017neon2,xu2017first} with the gradient cost of $\tilde{\cO}(\delta^{-2})$.%
\footnote{%
Recall that the NEgative-curvature-Originated-from-Noise method (or \NEON\ method for short) proposed independently by \citet{allen2017neon2,xu2017first} is a generic procedure that convert an algorithm that finds an  approximate first-order stationary points to the one that finds an  approximate second-order stationary point.
}
When $\ww_1$ is found, one can set $\ww_2 = \pm (\delta/\rho) \ww_1$ where $\pm$ is a random sign.
Then under Assumption \ref{assu:SSP}, Taylor's expansion implies that \citep{allen2017neon2}
\begin{eqnarray}\label{2de}
f(\x + \ww_2)
 \leq 
f(\x) 
+ 
[ \nabla f(\x)]^\top \ww_2  
+
\frac12 \ww_2^\top [\nabla^2 f(\x)] \ww_2
+ 
\frac{\rho}{6} \| \ww_2 \|^3
.
\end{eqnarray}
Taking expectation, one has $
\E f(\x + \ww_2)  \leq  f(\x) - \delta^3 / (2\rho^2) + \delta^3 / (6\rho^2) =  f(\x) -  \delta^3 / (3\rho^2)
.
$
This indicates that when we find a direction of negative curvature or Hessian, updating $\x \leftarrow \x + \ww_2$ decreases the function value by $\Omega(\delta^3)$ in expectation.  
Our \SPIDER-SFO algorithm fused with NC-Search is described in the following steps:

\begin{mdframed}[style=exampledefault]
	\begin{enumerate}[Step 1.]
		\item
		Run an efficient NC-Search iteration to find an $\cO(\delta)$-approximate negative Hessian direction $\ww_1$ using stochastic gradients, e.g.~\NEON2 \citep{allen2017neon2}.
		\item
	     If NC-Search find a $\ww_1$,  update $\x \leftarrow \x \pm (\delta / \rho) \ww_1$ in $\delta / (\rho\eta)$ mini-steps, and simultaneously use \SPIDER\ $\vv^k$ to maintain an estimate of $\nabla f(\x)$. Then Goto Step 1.
		\item
         If not, run \SPIDER-SFO  for $\delta/(\rho \eta)$ steps directly using the  \SPIDER ~$\vv^k$  (without restart) in Step 2. Then Goto Step 1.
         \item  During Step 3, if  we find $\| \vv^k\|\leq 2\cep$, return $\x^k$.
	\end{enumerate}
\end{mdframed}

\begin{algorithm}[tb!]
	\caption{\SPIDER-SFO\textsuperscript{+}: Input $\x^0$,  $S_1$, $S_2$, $n_0$, $q$, $\eta$,  $\sK$, $k=0$, $\epsilon$, $\cep$, 
		(For finding a second-order stationary point)}
	\label{algo:SPIDER-SFOplus}
	
	\begin{algorithmic}[1]
		\FOR{$j=0$ $\mathbf{to}$ $J$}
		\STATE  Run an efficient NC-search iteration, e.g.~\NEON2$(f,\x^k, 2\delta, \frac{1}{16J})$  and obtain $\ww_1$\label{sfo5}
		\IF{$\ww_1\neq\bot$}\STATE $\diamond$ Second-Order Descent:	
		\STATE    Randomly flip a sign, and set $\ww_2 =  \pm  \eta \ww_1$ and $
		j =	\delta / (\rho \eta)  -   1
		$ \label{sfo8}
		\FOR{$k$ $\mathbf{to}$ $k+ \sK$}
		\IF{$\mbox{mod}(k,q)=0$}\label{sfo1}
		\STATE Draw $\cS_1$ samples, $\vv^k = \nabla f_{S_1}(\x^k)$
		\ELSE	
		\STATE Draw $\cS_2$ samples, $\vv^k = \nabla f_{S_2}(\x^k) - \nabla f_{S_2}(\x^{k-1}) + \vv^{k-1}$
		\ENDIF\label{sfo2}
		\STATE $\x^{k+1} = \x^k - \ww_2$\label{sfo6}
		\ENDFOR	
		\ELSE 
		\STATE $\diamond$ First-Order Descent:
				\FOR{$k$ $\mathbf{to}$ $k+\sK$}
				\IF{$\mbox{mod}(k,q)=0$}\label{sfo3}
				\STATE Draw $\cS_1$ samples, $\vv^k = \nabla f_{S_1}(\x^k)$
				\ELSE	
				\STATE Draw $\cS_2$ samples, $\vv^k = \nabla f_{S_2}(\x^k) - \nabla f_{S_2}(\x^{k-1}) + \vv^{k-1}$
				\ENDIF	\label{sfo4}
				\IF{$\| \vv^k\|\leq 2\cep$}
				\RETURN $\x^k$
				\ENDIF
				\STATE $\x^{k+1} = \x^k - \eta \cdot (\vv^k / \|\vv^k\|)$\label{sfo7}
				\ENDFOR	
		\ENDIF
		\ENDFOR
	\end{algorithmic}
\end{algorithm}

The formal pseudocode of the algorithm described above, which we refer to as \SPIDER-SFO\textsuperscript{+}, is detailed in Algorithm \ref{algo:SPIDER-SFOplus}\footnote{In our initial version,  \SPIDER-SFO\textsuperscript{+} first find a FSP and then run NC-search iteration to find a SSP, which also ensures competitive $\tilde{\cO}(\epsilon^{-3})$ rate.  Our newly \SPIDER-SFO\textsuperscript{+} are easier to fuse momentum technique when $n$ is small. Please see the discussion later.}.
The core reason that \SPIDER-SFO\textsuperscript{+} enjoys a highly competitive convergence rate is that, instead of performing a single large step $\delta / \rho$ at the approximate direction of negative curvature as in \NEON2\citep{allen2017neon2}, we split such one large step into $\delta / (\rho\eta)$ small, equal-length mini-steps in Step 2, where each mini-step moves the iteration by an $\eta$ distance.
This allows the algorithm to successively maintain the \SPIDER\ estimate of the current gradient in Step 3 and avoid re-computing the gradient in Step 1.

Our final result on the convergence rate of Algorithm \ref{algo:SPIDER-SFOplus} is stated as:

\begin{theorem}[Second-Order Stationary Point]\label{theo:SSPT}
	Let Assumptions \ref{assu:SSP} hold.
	For the on-line case,  set $q, S_1, S_2, \eta$ in \eqref{inputOne}, $\sK = \frac{\delta L n_0}{\rho\epsilon}$ with any choice of $n_0 \in [1,2\sigma/\epsilon]$,   then with probability at least $1/2$\footnote{By multiple times (at most in $O(\log(1/p))$ times) of verification and restarting Algorithm \ref{algo:SPIDER-SFOplus} , one can also obtain a high-probability result.},  Algorithm \ref{algo:SPIDER-SFOplus} outputs an $\x^k$ with  $j \leq J = 4\left\lfloor\max\left(\frac{3\rho^2\Delta}{\delta^3}, \frac{4\Delta \rho}{\delta\epsilon}\right)\right\rfloor + 4$, and $k \leq K_0= \left(4 \left
	\lfloor\max\left(\frac{3\rho^2\Delta}{\delta^3}, \frac{4\Delta \rho}{\delta\epsilon}\right)\right\rfloor + 4\right) \frac{L n_0 \delta}{\rho \epsilon}$ satisfying
	\beq\label{ESSP}
	\|\nabla f(\x^k)\| \le \cep
	\quad\text{and}\quad 
	\lambda_{\min}(\nabla^2 f(\x^k)) \ge - 3\delta
	,
	\eeq
	with  $\cep = 10\epsilon\log\left(256 \left(\left\lfloor\max\left(\frac{3\rho^2\Delta}{\delta^3}, \frac{4\Delta \rho}{\delta\epsilon}\right)\right\rfloor + 1\right) \frac{\delta L n_0}{\rho \epsilon}+64 \right) = \tilde{\cO}(\epsilon) $.
	The  gradient cost to find a Second-Order Stationary Point with probability at least $1/2$ is upper bounded by 
	\[
	\tilde{\cO}\left( 
	\frac{\Delta L\sigma}{\epsilon^3}+\frac{\Delta \sigma L \rho}{\epsilon^2\delta^2}+\frac{\Delta L^2 \rho^2}{\delta^5}  + \frac{\Delta L^2 \rho}{\epsilon \delta^3}
	+\frac{\sigma^2}{\epsilon^2} +\frac{L^2}{\delta^2}  + \frac{ L \sigma \delta}{\rho \epsilon^2}  
	\right)
	.
	\]
	Analogously for the finite-sum case, under the same setting of Theorem \ref{theo:FSOtwo}, 
	set $q, S_1, S_2, \eta$ in \eqref{inputTwo}, $\sK = \frac{\delta L n_0}{\rho\epsilon}$,   $\cep = 16\epsilon\log\left(256 \left(\left\lfloor\max\left(\frac{3\rho^2\Delta}{\delta^3}, \frac{4\Delta \rho}{\delta\epsilon}\right)\right\rfloor + 1\right) \frac{\delta L n_0}{\rho \epsilon}+64 \right) = \tilde{\cO}(\epsilon) $,  with probability $1/2$,  Algorithm \ref{algo:SPIDER-SFOplus} outputs an $\x^k$ satisfying \eqref{ESSP} in $j \leq J$ and $k\leq K_0$
	with  gradients cost of 
	\[
	\tilde{\cO}\left( 
	\frac{\Delta Ln^{1/2}}{\epsilon^2} +\frac{\Delta \rho L n^{1/2}}{\epsilon\delta^2}+\frac{\Delta L^2 \rho^2}{\delta^5}+ \frac{\Delta L^2 \rho}{\epsilon \delta^3} + n +\frac{L^2}{\delta^2}+ \frac{ L n^{1/2} \delta}{\rho \epsilon}
	\right)
	.
	\]
\end{theorem}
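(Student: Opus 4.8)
The plan is to combine the gradient-tracking guarantee of Lemmas~\ref{lemm:aggregate} and~\ref{111} with a per-outer-iteration descent dichotomy, glued together by an optional-stopping argument. The first point to check is that the \SPIDER\ estimate $\vv^k$ stays accurate throughout Algorithm~\ref{algo:SPIDER-SFOplus} even though its iterates alternate between first-order steps $\x^{k+1}=\x^k-\eta\vv^k/\|\vv^k\|$ and negative-curvature mini-steps $\x^{k+1}=\x^k-\ww_2$ with $\ww_2=\pm\eta\ww_1$, $\|\ww_1\|=1$: in both cases $\|\x^{k+1}-\x^k\|=\eta$ exactly, and the \SPIDER\ counter $k$, the epoch length $q$, and the batch sizes $S_1,S_2$ are precisely those of \eqref{inputOne} (resp.\ \eqref{inputTwo}). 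Hence Lemma~\ref{111} (and its finite-sum analogue) applies verbatim to give $\E[\|\vv^k-\nabla f(\x^k)\|^2\mid\x_{0:k_0}]\le\epsilon^2$; under Assumption~\ref{assu:SSP} (which contains Assumption~\ref{assu:main2}) the concentration bound Proposition~\ref{prop:aggregate}(ii), applied with $\gamma=\tilde\cO(1/K_0)$ and a union bound over the at most $K_0$ \SPIDER\ steps, upgrades this to the event $\mathcal{E}_{\mathrm{S}}:=\{\|\vv^k-\nabla f(\x^k)\|\le\cep/10\ \text{for all}\ k\le K_0\}$, whose complement has probability at most $3/16$ by the logarithmic calibration of $\cep$.

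Next I would analyze one outer iteration $j$, conditioning on the event $\mathcal{E}_{\mathrm{N}}$ that the \NEON2 call at its start meets its guarantee (each call is invoked at confidence $1/(16J)$, so a union bound over the $\le J$ calls costs $\le 1/16$). Two cases arise. (a) If \NEON2 returns $\ww_1\ne\bot$, then $\ww_1^\top\nabla^2 f(\x^k)\ww_1\le-\delta$; the $\sK=\delta/(\rho\eta)$ mini-steps amount to the single net displacement $\mp(\delta/\rho)\ww_1$, and since this displacement does \emph{not} involve $\vv^k$ (only the \SPIDER\ bookkeeping does), plugging $\ww_2^{\mathrm{net}}=\pm(\delta/\rho)\ww_1$ into \eqref{2de} with the random sign killing the $[\nabla f(\x^k)]^\top\ww_2^{\mathrm{net}}$ term in expectation yields $\E f(\x^{k+\sK})\le f(\x^k)-\delta^3/(3\rho^2)$, exactly the display following \eqref{2de}. (b) If \NEON2 returns $\bot$, then $\lambda_{\min}(\nabla^2 f(\x^k))\ge-2\delta$ at the start of the first-order sub-loop; either some step hits $\|\vv^k\|\le2\cep$ and the algorithm returns $\x^k$, in which case the iterate has moved at most $\sK\eta=\delta/\rho$ from the certified point, so the $\rho$-Lipschitz Hessian of Assumption~\ref{assu:SSP} gives $\lambda_{\min}(\nabla^2 f(\x^k))\ge-3\delta$ while $\|\nabla f(\x^k)\|\le\|\vv^k\|+\cep/10$ gives the first-order half of \eqref{ESSP}; or no step does, and then $\|\nabla f(\x^k)\|\ge\|\vv^k\|-\cep/10>\cep$ throughout on $\mathcal{E}_{\mathrm{S}}$, so the normalized-descent estimate from the proofs of Theorems~\ref{theo:FSOone} and~\ref{theo:FSOonehp} yields a per-step decrease $\Omega(\eta\cep)$, hence a sub-loop decrease of at least $\delta\epsilon/(4\rho)$.

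Now set $m:=\min\!\big(\tfrac{\delta^3}{3\rho^2},\tfrac{\delta\epsilon}{4\rho}\big)$ and let $T$ be the (random) outer index at which the algorithm returns. Stopping also at the first failure of $\mathcal{E}_{\mathrm{S}}$ or $\mathcal{E}_{\mathrm{N}}$, the process $\Phi_j:=f(\x^{k_j})+(j\wedge T)\,m$ is a supermartingale bounded below by $f^\ast$ — the case-(a) decrease holds in expectation over the sign and unconditionally, the case-(b) non-terminating decrease is deterministic given $\mathcal{E}_{\mathrm{S}}$ — so optional stopping gives $m\,\E[\tau]\le f(\x^0)-f^\ast=\Delta$ for the combined stopping time $\tau\le T\wedge J$. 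Since $J=4\lfloor\Delta/m\rfloor+4=4\big\lfloor\max(3\rho^2\Delta/\delta^3,\,4\Delta\rho/(\delta\epsilon))\big\rfloor+4$, Markov's inequality gives $\PP(\tau\ge J)\le1/4$, so on the event $\mathcal{E}_{\mathrm{S}}\cap\mathcal{E}_{\mathrm{N}}\cap\{\tau<J\}$, which has probability at least $1-3/16-1/16-1/4=1/2$, the algorithm returns within $j\le J$ outer iterations, hence within $k\le K_0=J\sK$ inner steps, a point obeying \eqref{ESSP}. The gradient cost is then the sum over the $\le J$ outer iterations of the $\tilde\cO$ cost of each \NEON2 invocation (\citet{allen2017neon2}) plus the $\sK S_2+(\sK/q)S_1$ cost of each descent sub-loop (together with the one-time $S_1$); substituting \eqref{inputOne} (resp.\ \eqref{inputTwo}) and the bound on $J$ collapses the sum to the stated $\tilde\cO(\cdot)$ expression for the on-line (resp.\ finite-sum) case.

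\textbf{Main obstacle.} I expect the delicate part to be the probabilistic bookkeeping rather than any single inequality: the negative-curvature decrease $\delta^3/(3\rho^2)$ holds only in expectation, so convergence must be routed through an optional-stopping/supermartingale argument, and one has to allocate the failure budget ($1/16$ to the $\le J$ \NEON2 calls, $3/16$ to the uniform \SPIDER\ bound over all $\le K_0$ steps, $1/4$ to Markov) so that it sums to at most $1/2$ — all while verifying that the iterate's drift of up to $\delta/\rho$ during a first-order sub-loop does not spoil the curvature certificate obtained at its start, which is precisely why $\sK$ is chosen to make that drift equal $\delta/\rho$ and why the mini-step construction (instead of one large \NEON2-style jump) is essential for keeping the \SPIDER\ estimate alive across the alternating phases.
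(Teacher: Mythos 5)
Your proposal is correct and follows essentially the same route as the paper's proof: the same failure-budget split over (a) a uniform \SPIDER\ concentration event via Proposition~\ref{prop:aggregate}(ii), (b) a union bound over the $\le J$ \NEON2 calls at confidence $1/(16J)$, and (c) a Markov-type bound on the algorithm not having returned by outer step $J$; the same two-case per-sub-loop descent dichotomy (random-sign cancellation of the linear term giving an expected drop of $\delta^3/(3\rho^2)$ vs.\ a deterministic drop of at least $\delta\cep/(4\rho)$ from the normalized first-order steps on the good event); and the same use of the $\rho$-Lipschitz Hessian to propagate the $\NEON2$ certificate $\lambda_{\min}\ge-2\delta$ across the $\sK\eta=\delta/\rho$ drift of a sub-loop to $\lambda_{\min}\ge-3\delta$ at the returned point. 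The only cosmetic difference is in packaging: you phrase the descent-accounting step as a stopped supermartingale plus optional stopping and Markov's inequality, whereas the paper implements exactly the same cancellation by multiplying the function-value decrement at each sub-loop boundary by the (decreasing) indicator $\bone_{\cH_4^k}$ and telescoping the expectations directly over the $J$ outer blocks, then bounding $\PP(\cH_4^{K_0})$; these are the same mathematics. (Your specific split $3/16+1/16+1/4$ differs slightly from the paper's $1/16+1/16+1/4$ and gives a weaker total of $1/2$ rather than $9/16$, but the theorem only claims $1/2$, so this is immaterial.)
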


\begin{corollary}
Treating $\Delta$, $L$, $\sigma$, and $\rho$ as positive constants, with high probability the gradient cost for finding an $(\epsilon, \delta)$-approximate second-order stationary point is $\tilde{\cO}(\epsilon^{-3} + \delta^{-2}\epsilon^{-2}+\delta^{-5} )$ for the on-line case and $\tilde{\cO}(n^{1/2}\ep^{-2} + n^{1/2}\delta^{-2}\epsilon^{-1}+\delta^{-3}\epsilon^{-1} + \delta^{-5}+n)$  for the finite-sum case, respectively.
When $\delta = \cO(\epsilon^{0.5})$, the  gradient cost  is $\cO(
 \min(
n^{1/2}\epsilon^{-2}+ \epsilon^{-2.5}, \epsilon^{-3}
 ) )$.
\end{corollary}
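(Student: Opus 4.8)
The plan is to treat the corollary as a direct book-keeping consequence of Theorem~\ref{theo:SSPT}. The steps are: (a) absorb $\Delta,L,\sigma,\rho$ into the constant hidden by $\tilde{\cO}(\cdot)$; (b) discard the dominated terms using only $\epsilon,\delta\in(0,1)$ together with a single application of Young's inequality; (c) specialize to $\delta=\Theta(\epsilon^{1/2})$; and (d) produce the $\min$ by reverting to the on-line version of the algorithm in the regime where $n$ is large. First I would rewrite the on-line gradient cost from Theorem~\ref{theo:SSPT}, after hiding constants, as $\tilde{\cO}\bigl(\epsilon^{-3}+\epsilon^{-2}\delta^{-2}+\delta^{-5}+\epsilon^{-1}\delta^{-3}+\epsilon^{-2}+\delta^{-2}+\delta\epsilon^{-2}\bigr)$, and the finite-sum cost as $\tilde{\cO}\bigl(n^{1/2}\epsilon^{-2}+n^{1/2}\epsilon^{-1}\delta^{-2}+\delta^{-5}+\epsilon^{-1}\delta^{-3}+n+\delta^{-2}+n^{1/2}\delta\epsilon^{-1}\bigr)$.

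Next I would prune these sums. Since $\epsilon\le 1$ and $\delta\le 1$ we immediately have $\epsilon^{-2}\le\epsilon^{-3}$, $\delta\epsilon^{-2}\le\epsilon^{-2}\le\epsilon^{-3}$, $\delta^{-2}\le\delta^{-5}$, and $n^{1/2}\delta\epsilon^{-1}\le n^{1/2}\epsilon^{-1}\le n^{1/2}\epsilon^{-2}$, so all of these are absorbed into terms already present. The only non-monotone term is the cross term $\epsilon^{-1}\delta^{-3}$; for the on-line bound I would kill it by Young's inequality $ab\le a^{p}/p+b^{q}/q$ with $a=\epsilon^{-1}$, $b=\delta^{-3}$, $p=3$, $q=3/2$, giving $\epsilon^{-1}\delta^{-3}\le\tfrac13\epsilon^{-3}+\tfrac23\delta^{-9/2}\le\epsilon^{-3}+\delta^{-5}$. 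This yields the on-line cost $\tilde{\cO}(\epsilon^{-3}+\epsilon^{-2}\delta^{-2}+\delta^{-5})$; in the finite-sum case one simply retains $\epsilon^{-1}\delta^{-3}$ (a valid, if not tightest, upper bound), giving $\tilde{\cO}(n^{1/2}\epsilon^{-2}+n^{1/2}\epsilon^{-1}\delta^{-2}+\epsilon^{-1}\delta^{-3}+\delta^{-5}+n)$. I would also remark that $\epsilon^{-2}\delta^{-2}$ genuinely cannot be dropped, since along the curve $\epsilon^{-3}=\delta^{-5}$ (i.e.\ $\epsilon=\delta^{5/3}$) one has $\epsilon^{-2}\delta^{-2}=\delta^{-16/3}$ which, for $\delta$ small, strictly exceeds $\epsilon^{-3}+\delta^{-5}=2\delta^{-5}$.

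For the last display I would substitute $\delta=\Theta(\epsilon^{1/2})$, the scaling of a classical $(\epsilon,\Theta(\sqrt{\rho\epsilon}))$-approximate SSP. The on-line bound becomes $\tilde{\cO}(\epsilon^{-3}+\epsilon^{-3}+\epsilon^{-5/2})=\tilde{\cO}(\epsilon^{-3})$, and the finite-sum bound becomes $\tilde{\cO}(n^{1/2}\epsilon^{-2}+n^{1/2}\epsilon^{-2}+\epsilon^{-5/2}+\epsilon^{-5/2}+n)=\tilde{\cO}(n^{1/2}\epsilon^{-2}+\epsilon^{-5/2}+n)$. To absorb the stray $+n$ and reach $\tilde{\cO}(\min(n^{1/2}\epsilon^{-2}+\epsilon^{-5/2},\,\epsilon^{-3}))$ I would split on $n$: if $n\le\epsilon^{-4}$ then $n\le n^{1/2}\epsilon^{-2}$, so the finite-sum bound is already $\tilde{\cO}(n^{1/2}\epsilon^{-2}+\epsilon^{-5/2})$; if $n>\epsilon^{-4}$ then $\epsilon^{-3}<n\le n^{1/2}\epsilon^{-2}$, so one instead runs \SPIDER-SFO\textsuperscript{+} on the on-line instance obtained by drawing $i$ uniformly from $[n]$, which by the on-line part of Theorem~\ref{theo:SSPT} costs $\tilde{\cO}(\epsilon^{-3})$; taking the cheaper of the two alternatives gives the $\min$. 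Finally, since Theorem~\ref{theo:SSPT} only guarantees the above with probability $1/2$, the remaining ingredient is the standard amplification in its footnote: run $\cO(\log(1/p))$ independent copies, each followed by a verification step (re-checking $\|\nabla f(\x^k)\|\le\cep$ with a further $\tilde{\cO}(\epsilon^{-2})$ samples in the on-line case, or exactly at cost $n$ in the finite-sum case, and re-running the negative-curvature search), which only multiplies the cost by a logarithmic factor already subsumed by $\tilde{\cO}(\cdot)$.

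I do not expect a genuine obstacle here: the argument is entirely monotonicity in $\epsilon,\delta\in(0,1)$ plus one Young's-inequality estimate. The only two points that merit care are (i) the cross term $\epsilon^{-1}\delta^{-3}$, where comparing it to a single kept term fails and one must split it between $\epsilon^{-3}$ and $\delta^{-5}$, and (ii) the $\min(\cdot,\epsilon^{-3})$, which is not obtained by algebraic domination inside the finite-sum bound but by the algorithmic choice of falling back to the on-line procedure once $n$ exceeds $\epsilon^{-4}$.
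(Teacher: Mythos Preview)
Your proposal is correct and matches the paper's intent: the corollary is stated immediately after Theorem~\ref{theo:SSPT} without a separate proof, so the paper treats it as a direct bookkeeping consequence of the theorem's gradient-cost expressions, exactly as you do. Your Young's-inequality handling of the cross term $\epsilon^{-1}\delta^{-3}$ and your case split on $n\lessgtr\epsilon^{-4}$ to obtain the $\min$ (by falling back to the on-line algorithm, legitimate since the corollary explicitly treats $\sigma$ as a constant) make explicit the steps the paper leaves implicit.
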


Notice that one may directly apply an on-line variant of the \NEON\ method to the \SPIDER-SFO Algorithm \ref{algo:SPIDER-SFO} which alternately does Second-Order Descent (but not maintaining \SPIDER) and First-Order Descent (Running a new \SPIDER-SFO).
Simple analysis suggests that the \NEON + \SPIDER-SFO algorithm achieves a gradient cost of
$
\tilde{\cO}\big(
  \epsilon^{-3} + \epsilon^{-2} \delta^{-3} + \delta^{-5}
\big)
$ 
for the on-line case and 
$
\tilde{\cO}\big(
  n^{1/2}\epsilon^{-2} + n^{1/2} \epsilon^{-1} \delta^{-3} + \delta^{-5}
\big)
$ 
for the finite-sum case \citep{allen2017neon2,xu2017first}.  
We discuss the differences in detail.

\begin{itemize}
\item
The dominate term in the gradient cost of \NEON + \SPIDER-SFO is the so-called \textit{coupling term} in the regime of interest: $\epsilon^{-2} \delta^{-3}$ for the on-line case and $n^{1/2} \epsilon^{-1} \delta^{-3}$ for the finite-sum case, separately.
Due to this term, most convergence rate results in concurrent works for the on-line case such as \citet{reddi2018generic,tripuraneni2017stochastic,xu2017first,allen2017neon2,zhou2018finding} have gradient costs that cannot break the $\cO(\epsilon^{-3.5})$ barrier when $\delta$ is chosen to be $\cO(\epsilon^{0.5})$.
Observe that we always need to run a new \SPIDER-SFO  which at least costs $\cO\big(\min(\epsilon^{-2}, n)\big)$ stochastic gradient accesses.

\item
Our analysis sharpens the seemingly non-improvable coupling term by modifying the single large \NEON\ step to many mini-steps.
Such modification enables us to maintain the \SPIDER\ estimates and obtain a coupling term $\cO\left( \min(n, \epsilon^{-2} ) \delta^{-2}\right)$ of \SPIDER-SFO\textsuperscript{+}, which improves upon the \NEON\ coupling term $\cO\left( \min(n, \epsilon^{-2} ) \delta^{-3}\right)$ by a factor of $\delta$.

\item
For the finite-sum case, \SPIDER-SFO\textsuperscript{+} enjoys a convergence rate that is faster than existing methods only in the regime $n= \Omega(\ep^{-1})$ [Table \ref{tab:comparison}].
For the case of $n = \cO(\ep^{-1})$, using \SPIDER\ to track the gradient in the \NEON\ procedure can be more costly than applying appropriate acceleration techniques \citep{agarwal2017finding,carmon2016accelerated}.%
\footnote{%
\SPIDER-SFO\textsuperscript{+} enjoys a faster rate than \NEON+\SPIDER-SFO where computing the ``full'' gradient dominates the gradient cost, namely $\delta = \cO(1)$ in the on-line case and $\delta = \cO(n^{1/2}\epsilon)$ for the finite-sum case. 
}
Beacause it is well-known that momentum technique \citep{nesterov1983method} provably ensures faster convergence rates when $n$ is sufficient small \citep{Shalev-Shwartz2016}.  One can also  apply momentum  technique to  solve the sub-problem in Step 1 and 3 like \citet{carmon2016accelerated,allen2017neon2} when $n\leq \cO(\epsilon^{-1})$, and thus can achieve  the state-of-the-art  gradient cost of
$$
\tilde{\cO}\left(\min\left(
n \ep^{-1.5} + n^{3/4} \ep^{-1.75}
,
n^{1/2} \ep^{-2} + n^{1/2} \ep^{-1} \delta^{-2}
\right)
+
\min\left(
n + n^{3/4} \delta^{-0.5}, \delta^{-2}
\right)
\delta^{-3}\right)
,
$$
in all scenarios.
\end{itemize}

\subsection{Comparison with Concurrent Works}\label{sec:comp}

\begin{table}[tb!]
\centering
\resizebox{\textwidth}{!}{%
\begin{tabular}{|c|ll|c|c|}
\hline
&
Algorithm
&
&
Online
&
Finite-Sum
\\ \hline
\multirow{5}{*}{
\begin{tabular}{@{}l@{}}
	First-order\\
Stationary \\ Point
\end{tabular}} 
&
GD / SGD
&
\citep{NESTEROV}
&
$\ep^{-4}$
&
$n\ep^{-2}$
\\ \hhline{~----}
&
SVRG / SCSG
&
\begin{tabular}{@{}l@{}}
\citep{allen2016variance}
\\
\citep{reddi2016stochastic}
\\
\citep{lei2017non}
\end{tabular}
&
$\ep^{-3.333}$
&
$n + n^{2/3}\ep^{-2}$
\\ \hhline{~----}
&
\cellcolor{orange!25}
\SPIDER-SFO
&
\cellcolor{orange!25}
(this work)
&
\cellcolor{orange!25}
$\ep^{-3}$
&
\cellcolor{orange!25}
$\,\,\,\,$
$n + n^{1/2}\ep^{-2}$
$\,\,^\Delta$
\\ \hline
\multirow{10}{*}{
\begin{tabular}{@{}l@{}}
	First-order\\ Stationary \\ Point \\ \\ 
\footnotesize
(Hessian- \\ 
\footnotesize
Lipschitz \\ 
\footnotesize
Required)
\end{tabular}} 
&
Perturbed GD / SGD
&
\begin{tabular}{@{}l@{}}
\citep{ge2015escaping}
\\
\citep{jin2017escape}
\end{tabular}
&
\begin{tabular}{@{}c@{}}
$poly(d) \ep^{-4}$
\end{tabular}
&
$n\ep^{-2}$
\\ \hhline{~----}
&
\begin{tabular}{@{}l@{}}
\NEON+GD \\ / \NEON+SGD
\end{tabular}
&
\begin{tabular}{@{}l@{}}
\citep{xu2017first}
\\
\citep{allen2017neon2}
\end{tabular}
&
$\ep^{-4}$
&
$n\ep^{-2}$
\\ \hhline{~----}
&
\begin{tabular}{@{}l@{}}
AGD
\end{tabular}
&
\begin{tabular}{@{}l@{}}
\citep{jin2017accelerated}
\end{tabular}
&
N/A
&
$n\ep^{-1.75}$
\\ \hhline{~----}
&
\begin{tabular}{@{}l@{}}
\NEON+SVRG \\ / \NEON+SCSG
\end{tabular}
&
\begin{tabular}{@{}l@{}}
\citep{allen2016variance}
\\
\citep{reddi2016stochastic}
\\
\citep{lei2017non}
\end{tabular}
&
\begin{tabular}{@{}c@{}}
$\ep^{-3.5}$
\\
$(\ep^{-3.333})$
\end{tabular}
&
$n\ep^{-1.5} + n^{2/3}\ep^{-2}$
\\ \hhline{~----}
&
\NEON+FastCubic/CDHS
&
\begin{tabular}{@{}l@{}}
\citep{agarwal2017finding}
\\
\citep{carmon2016accelerated}
\\
\citep{tripuraneni2017stochastic}
\end{tabular}
&
$\ep^{-3.5}$
&
\cellcolor{yellow!25}
$n \ep^{-1.5} + n^{3/4} \ep^{-1.75}$
\\ \hhline{~----}
&
\NEON+Natasha2
&
\begin{tabular}{@{}l@{}}
\citep{allen2017natasha2}
\\
\citep{xu2017first}
\\
\citep{allen2017neon2}
\end{tabular}
&
\begin{tabular}{@{}c@{}}
$\ep^{-3.5}$
\\
$(\ep^{-3.25})$
\end{tabular}
&
$n\ep^{-1.5} + n^{2/3}\ep^{-2}$
\\ \hhline{~----}
&
\cellcolor{orange!25}
\SPIDER-SFO\textsuperscript{+}
&
\cellcolor{orange!25}
(this work)
&
\cellcolor{orange!25}
$\ep^{-3}$
&
\cellcolor{orange!25}
$n^{1/2} \epsilon^{-2}$
$\,\,^\Theta$
\\ \hline
\end{tabular}}
\caption{
Comparable results on the gradient cost for nonconvex optimization algorithms that use only individual (or stochastic) gradients.
Note that the gradient cost hides a poly-logarithmic factors of $d$, $n$, $\ep$.
For clarity and brevity purposes, we record for most algorithms the gradient cost for finding an $(\ep, \cO(\ep^{0.5}))$-approximate second-order stationary point.
For some algorithms we added in a bracket underneath the best gradient cost for finding an $(\ep, \cO(\ep^\alpha))$-approximate second-order stationary point among $\alpha \in (0,1]$, for the fairness of comparison.
\\
{
\footnotesize $^\Delta$: we provide lower bound for this gradient cost entry.
}
\\
{
\footnotesize $^\Theta$: this entry is for $n\ge \Omega(\ep^{-1})$ only, in which case \SPIDER-SFO\textsuperscript{+} outperforms \NEON+FastCubic/CDHS.
}
}
\label{tab:comparison}

\end{table}

\setlength{\textfloatsep}{10pt plus 1.0pt minus 2.0pt}

This subsection compares our \SPIDER\ algorithms with concurrent works.
In special, we detail our main result for applying \SPIDER\ to first-order methods in the list below:

\begin{enumerate}[(i)]
\item
For the problem of finding an $\epsilon$-approximate \FSP, under Assumption \ref{assu:main} our results indicate a gradient cost of $\cO(\min(\ep^{-3}, n^{1/2}\ep^{-2}) )$ which supersedes the best-known convergence rate results for stochastic optimization problem \eqref{opt_eq} [Theorems \ref{theo:FSOone} and \ref{theo:FSOtwo}].
Before this work, the best-known result is $\cO\left(\min(\ep^{-3.333}, n^{2/3}\ep^{-2})\right)$, achieved by \citet{allen2016variance,reddi2016stochastic} in the finite-sum case and \citet{lei2017non} in the on-line case, separately.
Moreover, such a gradient cost achieves the algorithmic lower bound for the finite-sum setting [Theorem \ref{theo:lowerbdd}].

\item
For the problem of finding $(\ep,\delta)$-approximate \SSP\ $x$, under both Assumptions \ref{assu:main} and \ref{assu:SSP}, the gradient cost is $\tilde{\cO}(\ep^{-3} + \epsilon^{-2}\delta^{-2} + \delta^{-5}  )$ in the on-line case and $\tilde{\cO}(
n^{1/2}\epsilon^{-2}
 +n^{1/2} \epsilon^{-1} \delta^{-2} + \epsilon^{-1} \delta^{-3}
 +\delta^{-5} +n
)$ in the finite-sum case [Theorem \ref{theo:SSPT}].
In the classical definition of \SSP\ where $\delta = \cO(\ep^{0.5})$, such gradient cost is simply $\cO(\epsilon^{-3})$ in the on-line case. 
In comparison, to the best of our knowledge the best-known results only achieve a gradient cost of $\cO(\ep^{-3.5})$ under similar assumptions
\citep{reddi2018generic, tripuraneni2017stochastic, allen2017natasha2, allen2017neon2, zhou2018finding}.
\end{enumerate}

We summarize the comparison with concurrent works that solve \eqref{opt_eq} under similar assumptions in Table \ref{tab:comparison}.
In addition, we provide Figure \ref{fig:comp} which draws the gradient cost against the magnitude of $n$ for both an approximate stationary point.%
\footnote{%
One of the results not included in this table is \citet{carmon2017convex}, which finds an $\ep$-approximate first-order stationary point in $\cO(n\ep^{-1.75})$ gradient evaluations. 
However, their result relies on a more stringent Hessian-Lipschitz condition, in which case a second-order stationary point can be found in similar gradient cost \citep{jin2017accelerated}.
}
For simplicity, we leave out the complexities of the algorithms that has Hessian-vector product access and only record algorithms that use stochastic gradients only.%
\footnote{%
Due to the \NEON\ method \citep{xu2017first,allen2017neon2}, nearly all existing Hessian-vector product based algorithms in stochastic optimization can be converted to ones that use stochastic gradients only.}
Specifically, the yellow-boxed complexity $\cO(n \ep^{-1.5} + n^{3/4} \ep^{-1.75})$ in Table \ref{tab:comparison}, which was achieved by \NEON+FastCubic/CDHS \citep{allen2017neon2,jin2017accelerated} for finding an approximate second-order stationary point in the finite-sum case using momentum technique, are the only results that have \textit{not} been outperformed by our \SPIDER-SFO\textsuperscript{+} algorithm in certain parameter regimes ($n \le \cO(\epsilon^{-1})$ in this case).

\begin{figure}
\centering
\includegraphics[height=2.2in]{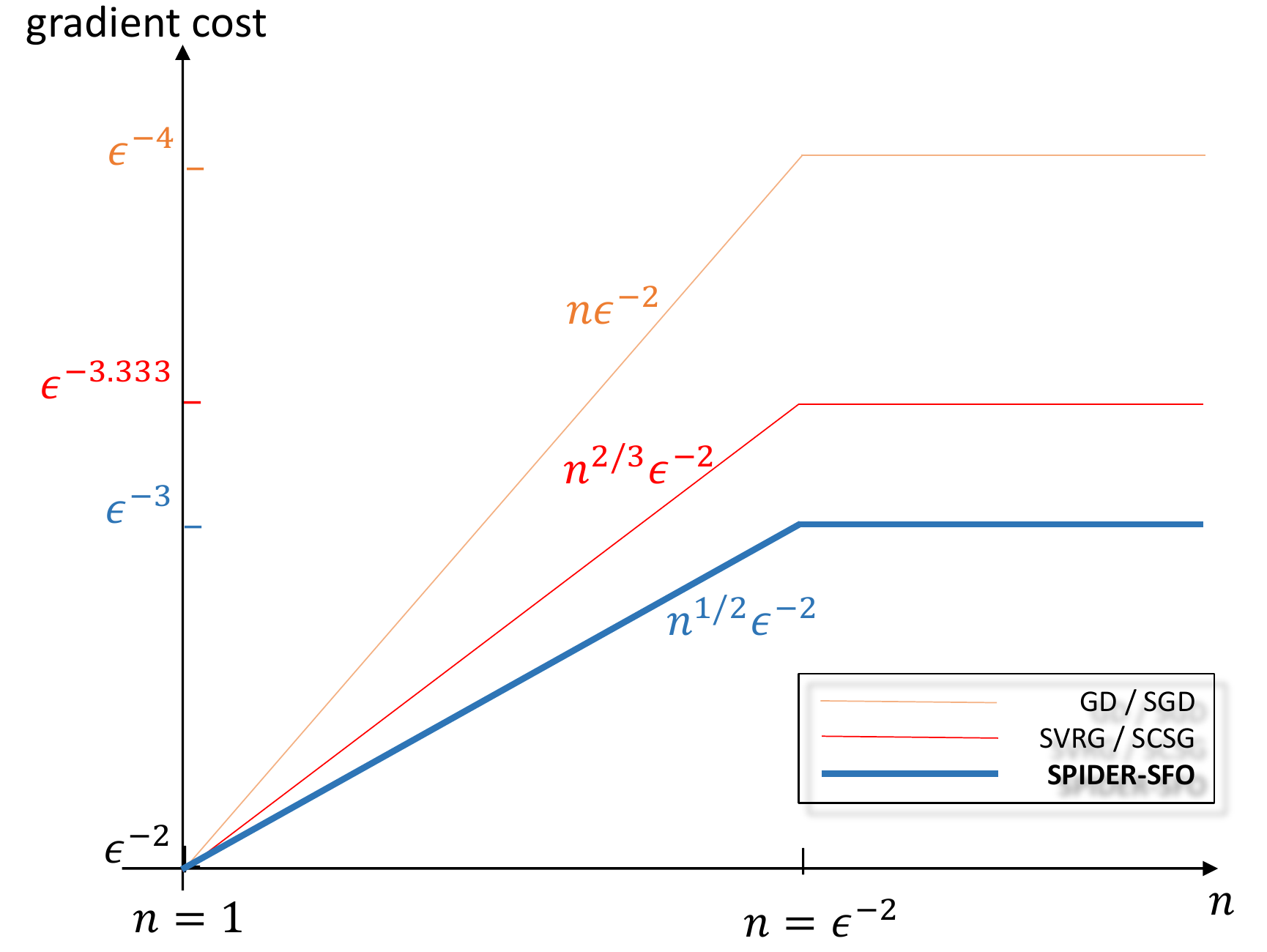}
\quad
\includegraphics[height=2.2in]{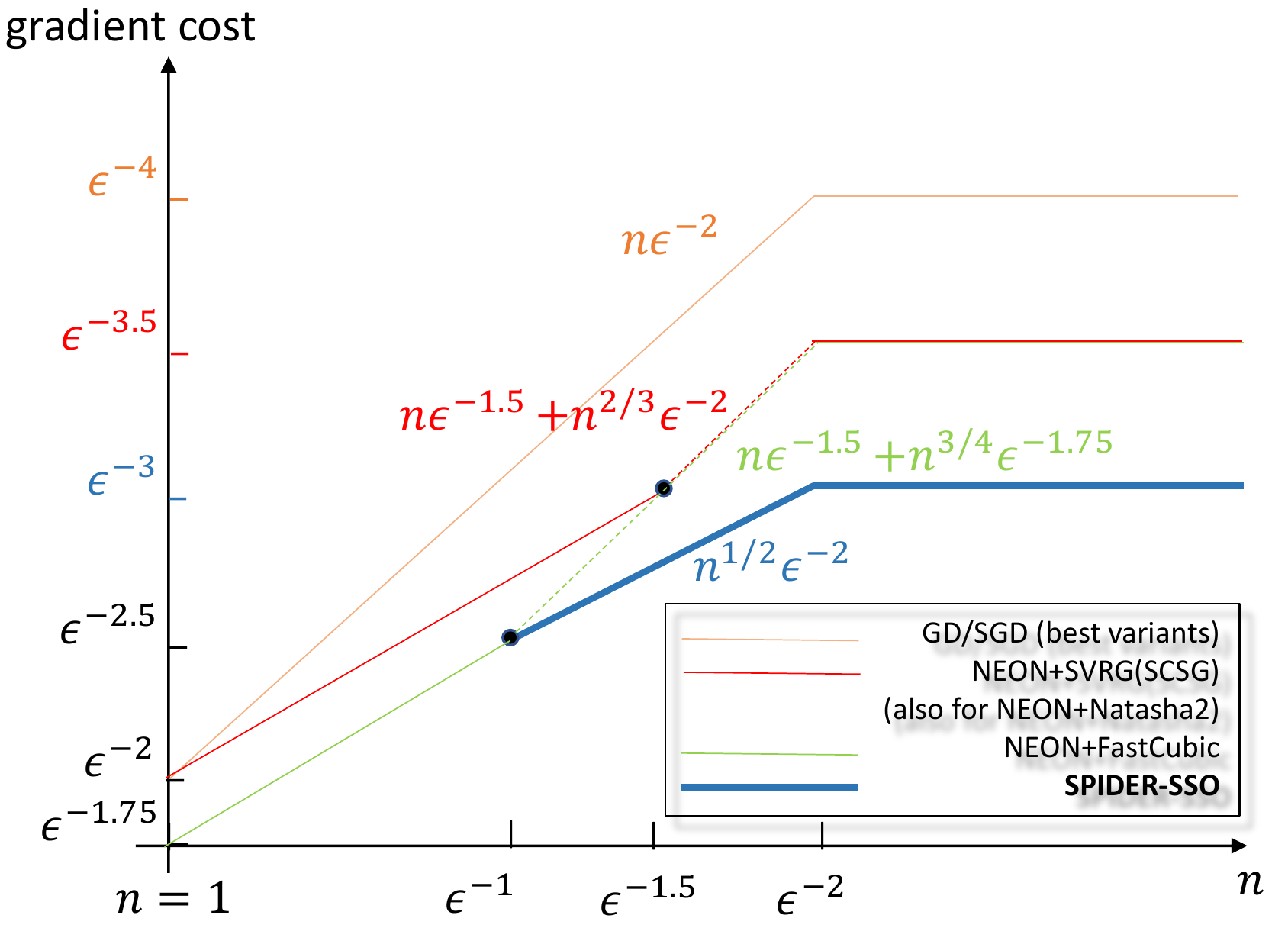}
\caption{
Left panel:
gradient cost comparison for finding an $\ep$-approximate first-order stationary point.
Right panel:
gradient cost comparison for finding an $(\ep,\cO(\ep^{0.5}))$-approximate second-order stationary points (note we assume Hessian Lipschitz condition). 
Both axes are on the logarithmic scale of $\ep^{-1}$. 
}
\label{fig:comp}
\end{figure}

\setlength{\textfloatsep}{10pt plus 1.0pt minus 2.0pt}

\def\IC{online case}
\def\FC{finite-sum case}

\section{SPIDER for Stochastic Zeroth-Order Method}\label{sec:SZO}
\begin{algorithm}[tb]
\caption{\SPIDER-SZO: Input $\x^0$,  $S_1$, $S_2$, $q$, $u$, $\epsilon$ 
(For finding first-order stationary point)}
\label{algo:SPIDER-ZERO}
\begin{algorithmic}[1]
\FOR{$k=0$ to $K$}
\IF{$\mod(k,q)=0$}	
       \STATE   Draw $S_1'=S_1/d$  training samples, for each dimension $j\in [d]$, compute  {\hfill   ($\diamond$ with $2S_1$  total  IZO costs) } 
              \[
              v^k_{j} = \dfrac{1}{S_1'} \displaystyle\sum_{i\in \cS_1'}\frac{f_i(\x^k +\mu \e_j) - f_i (\x^k)}{\mu}                   
              \]         
        where $\e_j$ denotes the vector with $j$-th natural unit basis vector.
\ELSE
		\STATE   Draw $S_2$ sample  pairs $(i, \uu)$, where $i\in [n]$ and $\uu \sim N(\mathbf{0},\Ib_d)$ with $i$ and $\mu$ being independent.
		\STATE Update
		\[
		\vv^k =  \dfrac{1}{S_2}
		\displaystyle\sum_{(i, \uu)\in \cS_2} \left( 
		\frac{f_{i}(\x^k+\mu \u)- f_i(\x^k)}{\mu}\uu - \frac{f_{i}(\x^{k-1}+\mu \u)- f_i(\x^{k-1})}{\mu}\uu 
		\right) +   \vv^{k-1}
		\]
\ENDIF
  \STATE   $\x^{k+1} = \x^k - \eta^k \vv^k$ \text{ where}$\quad \eta^k = \min \left( \frac{\epsilon}{Ln_0\|\vv^k\|}, \frac{1}{2L n_0} \right)$ 	  {\hfill $\diamond$ for convergence rates in expectation}
\ENDFOR
\STATE  Return $\tilde{\x}$ chosen uniformly at random from $\{\x^k \}_{k=0}^{K-1}$
\end{algorithmic}
\end{algorithm}

For SZO algorithms,  \eqref{Q0bdd} can be solved only from the Incremental Zeroth-Order Oracle (IZO)\citep{nesterov2011random},  which is defined as:

\begin{definition}
	An IZO takes an index $i\in [n]$ and a point $\x \in \RR^d$, and returns the $f_i(\x)$. 
\end{definition}

We use Assumption \ref{assu:main2} (including (ii'))  for convergence analysis which  is standard for  SZO\citep{nesterov2011random,ghadimi2013stochastic} algorithms.
Because the true gradient are not allowed to obtain for SZO. Most works \citep{nesterov2011random,ghadimi2013stochastic,shamir2017optimal} use the gradient of a smoothed version of the objective function through a two-point feedback in a stochastic setting.  Following \citep{nesterov2011random}, we consider the typical Gaussian distribution  in the convolution to smooth the function.  
Define
\begin{eqnarray}
\hf (\x) = \frac{1}{(2 \pi)^{\frac{d}{2}}} \int f(\x+\mu \uu)e ^{-\frac{1}{2}\|\uu\|^2}d\uu = \E_\uu [f(\x +\mu \uu)], 
\end{eqnarray}
where $\x \in \RR^{d}$. 
From \citep{nesterov2011random},  the following properties holds :

\begin{enumerate}[(i)]
	\item The gradient of $\hf$ satisfies:
	\begin{eqnarray}
	\nabla \hf (\x) =  \frac{1}{(2 \pi)^{\frac{d}{2}}} \int \frac{f(\x+\mu \uu) -f(\x)  }{\mu} \uu e ^{-\frac{1}{2}\|\uu\|^2}d\uu. 
	\end{eqnarray}
	\item For any $\x \in \RR^d$,  $f(\x)$ has Lipschitz continuous gradients, we have 
	\begin{eqnarray}\label{324}
	\| \nabla  \hf(\x) - \nabla f(\x) \|\leq \frac{\mu}{2}L(d+3)^{\frac{3}{2}}.
	\end{eqnarray}
	\item For any $\x \in \RR^d$, $f(\x)$ has Lipschitz continuous gradients, we have 
	\begin{eqnarray}\label{325}
	\E_{\uu}\left[\frac{1}{\mu^2} \left(f(\x+\mu \uu) -f(\x)\right)^2\| \uu\|^2    \right]\leq  \frac{\mu^2}{2} L^2 (d+6)^3 +2(d+4) \| \nabla f(\x)\|^2.
	\end{eqnarray} 
\end{enumerate}

From the (1), suppose  $\uu\sim N(\mathbf{0}, \mathbf{I}_d)$, and $i\in [n]$, with $\uu $ and $i$ being independent, we have
\begin{eqnarray}\label{226}
&&\E_{i, \uu} \frac{f_{i}(\x^k +\mu \uu_i) - f_{i}(\x^k)}{\mu}\uu = \frac{1}{(2 \pi)^{\frac{d}{2}}} \E_i \left(\int \frac{f_i(\x+\mu \uu) -f_i(\x)  }{\mu} \uu e ^{-\frac{1}{2}\|\uu\|^2}d\uu\right)\notag\\
&=&\frac{1}{(2 \pi)^{\frac{d}{2}}}  \left(\int \frac{f(\x+\mu \uu) -f(\x)  }{\mu} \uu e ^{-\frac{1}{2}\|\uu\|^2}d\uu\right) =  \nabla \hf(\x^k).
\end{eqnarray}
Also 
\begin{eqnarray}\label{335}
\E_{i,\uu} \left[\frac{f_{i}(\x^k +\mu \uu) - f_{i}(\x^k)}{\mu} \uu - \left(\frac{f_{i}(\x^{k-1} +\mu \uu) - f_{i}(\x^{k-1})}{\mu}\uu \right)\right]   =\nabla  \hf(\x^{k}) -\nabla\hf(\x^{k-1}).
\end{eqnarray}

 For non-convex case, the best known result is $\cO(d\epsilon^{-4})$ from \cite{ghadimi2013stochastic}. We has not found a work that applying  Variance Reduction technique to significantly reduce the complexity of IZO. This might because that even in \FC,  the  full gradient is not available (with noise).  In this paper, we give a  stronger results by  \SPIDER\ technique, directly reducing the IZO from $\cO(d\epsilon^{-4})$ to $\cO( \min(dn^{1/2}\epsilon^{-2}, d\epsilon^{-3}))$.

From \eqref{335}, we can  integrate the two-point feed-back to track $\nabla \hf(\x)$.   The algorithm is shown in Algorithm \ref{algo:SPIDER-ZERO}. Then the following lemma shows that  $\vv^k$ is a high accurate estimator of $\| \nabla \hf(\x^k)\|$:
\begin{lemma}\label{zero1}
Under the Assumption \ref{assu:main2}, suppose $i$ is random number of the function index, ($i\in [n]$) and $\uu$ is a  standard Gaussian random vector, i.e. $\uu\sim N(\mathbf{0}, \mathbf{I}_d)$, we have
	\begin{eqnarray}\label{288}
	\E_{ i, \uu} \left\|    \left[\frac{f_{i}(\x +\mu \uu) - f_{i}(\x)}{\mu} \uu - \left(\frac{f_{i}(\y +\mu \uu) - f_{i}(\y)}{\mu}\uu \right)\right]  \right\|^2 \leq 2 (d+4)L^2\|\x - \y\|^2+ 2\mu^2 (d+6)^3L^2.
	\end{eqnarray}
\end{lemma}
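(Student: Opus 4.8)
The plan is to observe that, for a fixed index $i$ and fixed points $\x,\y$, the random vector inside the norm on the left-hand side of \eqref{288} is nothing but a two-point Nesterov--Spokoiny gradient estimator, only built from a shifted difference of $f_i$ rather than from $f_i$ itself. The desired bound then drops out of property \eqref{325} recorded above, at the cost of merely doubling the Lipschitz constant.

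First I would introduce the auxiliary function $\phi(\z) := f_i(\x+\z) - f_i(\y+\z)$ for $\z\in\RR^d$. Since $\big(f_i(\x+\mu\uu)-f_i(\x)\big) - \big(f_i(\y+\mu\uu)-f_i(\y)\big) = \phi(\mu\uu)-\phi(0)$, the vector appearing in \eqref{288} equals $\frac{1}{\mu}\big(\phi(\mu\uu)-\phi(0)\big)\uu$, so its squared norm is $\frac{1}{\mu^2}\big(\phi(\mu\uu)-\phi(0)\big)^2\|\uu\|^2$. Next I would record the two facts about $\phi$ that feed into \eqref{325}: (a) its gradient $\nabla\phi(\z)=\nabla f_i(\x+\z)-\nabla f_i(\y+\z)$ is $2L$-Lipschitz, because by the triangle inequality and Assumption \ref{assu:main2} (ii') one has $\|\nabla\phi(\z_1)-\nabla\phi(\z_2)\|\le \|\nabla f_i(\x+\z_1)-\nabla f_i(\x+\z_2)\| + \|\nabla f_i(\y+\z_1)-\nabla f_i(\y+\z_2)\| \le 2L\|\z_1-\z_2\|$; and (b) $\|\nabla\phi(0)\| = \|\nabla f_i(\x)-\nabla f_i(\y)\|\le L\|\x-\y\|$, again by Assumption \ref{assu:main2} (ii').

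Then I would apply property \eqref{325} to the function $\phi$ at the point $0$, with its Lipschitz constant taken to be $2L$ in place of $L$, which gives
\[
\E_{\uu}\left[\frac{1}{\mu^2}\big(\phi(\mu\uu)-\phi(0)\big)^2\|\uu\|^2\right] \;\le\; \frac{\mu^2}{2}(2L)^2(d+6)^3 + 2(d+4)\|\nabla\phi(0)\|^2 ,
\]
and since $\|\nabla\phi(0)\|\le L\|\x-\y\|$ and $(2L)^2=4L^2$, the right-hand side is at most $2\mu^2 L^2(d+6)^3 + 2(d+4)L^2\|\x-\y\|^2$. As this bound does not depend on $i$, taking $\E_i$ over the (independent) function index leaves it unchanged, so \eqref{288} follows.

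The only genuine obstacle here is conceptual rather than computational: one has to recognize the differenced estimator as the two-point estimator of the auxiliary function $\phi$, after which the $\cO(d)$-in-dimension Lipschitz-type term and the $\cO(\mu^2 d^3)$ bias term are inherited verbatim from \eqref{325}. To make this rigorous one should make sure \eqref{325} is stated (as it is in \citet{nesterov2011random}) for an \emph{arbitrary} $C^{1}$ function with Lipschitz gradient, not merely for the particular objective $f$ of the paper, since it is exactly this generality that licenses its application to $\phi$ with the doubled Lipschitz constant; everything else is a direct substitution.
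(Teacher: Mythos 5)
Your argument is correct and gives exactly the stated constants, but it follows a genuinely different route from the paper. The paper's proof works ``by hand'': it inserts $\langle \nabla f_i(\x),\uu\rangle\uu-\langle\nabla f_i(\y),\uu\rangle\uu$ inside the norm, applies $\|a+b\|^2\le 2\|a\|^2+2\|b\|^2$ twice to split into a linearized part and two remainder parts, and then controls the three pieces separately using the Taylor remainder bound $|f_i(\a)-f_i(\b)-\langle\nabla f_i(\a),\a-\b\rangle|\le\tfrac{L}{2}\|\a-\b\|^2$, the Gaussian moment identity $\E_\uu\|\langle\a,\uu\rangle\uu\|^2\le(d+4)\|\a\|^2$, and $\E_\uu\|\uu\|^6\le(d+6)^3$, finally invoking the averaged Lipschitz bound $\E_i\|\nabla f_i(\x)-\nabla f_i(\y)\|^2\le L^2\|\x-\y\|^2$. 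You instead recognize the differenced quantity as the two-point estimator of the auxiliary function $\phi(\z)=f_i(\x+\z)-f_i(\y+\z)$, whose gradient is $2L$-Lipschitz with $\|\nabla\phi(0)\|\le L\|\x-\y\|$, and then invoke the Nesterov--Spokoiny inequality \eqref{325} directly for $\phi$ at the origin, obtaining $\tfrac{\mu^2}{2}(2L)^2(d+6)^3 + 2(d+4)\|\nabla\phi(0)\|^2$, which collapses to exactly the target. Your approach is shorter and conceptually cleaner: it makes transparent that the $(d+4)$-term is a ``signal'' term governed by $\nabla\phi(0)$ and the $(d+6)^3$-term is the smoothing-bias term, whereas in the paper these emerge only after unwinding two rounds of the Young inequality. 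The one thing you must be careful about, and you flag it correctly, is that \eqref{325} must be read as a statement about an arbitrary $L$-smooth scalar function, not merely about the objective $f$ of the paper; this is indeed the form in which it appears in \citet{nesterov2011random}, so the substitution $f\to\phi$, $L\to 2L$ is licit. The paper's proof effectively re-derives that inequality from scratch for the differenced estimator, so it is self-contained, while yours modularizes the argument; both are valid.
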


 From  \eqref{324}, by setting a smaller $\mu$,  the smoothed gradient $\nabla \hf(\x)$  approximates $\nabla f(\x)$, which ensures sufficient function descent in each iteration.  For simpleness, we only give expectation result, shown in Theorem \ref{zero3}.
\begin{theorem}\label{zero3}
		Under the Assumption \ref{assu:main2} (including (ii')). For infinite case, set	$\mu  =  \min\left( \frac{\epsilon}{2\sqrt{6} L\sqrt{d}}, \frac{\epsilon}{\sqrt{6} n_0 L(d+6)^{3/2}}   \right)$, $S_1 = \frac{96d\sigma^2}{\epsilon^2}$, $S_2 = \frac{30(2d+9)\sigma}{\epsilon n_0}$,  $q =\frac{5n_0\sigma}{\epsilon}$, where $n_0 \in [1, \frac{30(2d+9)\sigma}{\epsilon}]$. 	In the finite-sum case, set the parameters $S_2 = \frac{(2d+9)n^{1/2}}{n_0}$, and  $q=  \frac{n_0n^{1/2}}{6}$, let $S_1/d = [n]$, i.e. $v^k_j = f(\x^k +\mu \e_j) - f(\x^k)/\mu$ with $j\in [d]$, where $n_0 \in [1, \frac{n^{1/2}}{6}]$.		 Then with $\eta^k  = \min (\frac{1}{2Ln_0}, \frac{\epsilon}{Ln_0\|\vv^k\|})$,
		 $K = \left\lfloor(4L \Delta n_0)\epsilon^{-2}\right\rfloor+1$, for  
		  Algorithm \ref{algo:SPIDER-ZERO} we have
		 \begin{eqnarray}
		 \E \left[\|\nabla f(\tx) \|\right]\leq 6\epsilon.
		 \end{eqnarray}
 The IZO calls are $\cO\left( d\min(n^{1/2}\epsilon^{-2},  \epsilon^{-3})\right)$.
\end{theorem}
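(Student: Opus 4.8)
The plan is to run the same three-stage argument behind Theorems~\ref{theo:FSOone} and \ref{theo:FSOtwo}, but with the Gaussian-smoothed surrogate $\hf$ playing the role of $f$ in the gradient-tracking part. Concretely, I would (a) show that the zeroth-order \SPIDER\ iterate $\vv^k$ of Algorithm~\ref{algo:SPIDER-ZERO} tracks $\nabla\hf(\x^k)$ up to an $\cO(\epsilon)$ root-mean-square error, (b) convert this into an $\cO(\epsilon)$ bound on $\E\|\vv^k-\nabla f(\x^k)\|^2$ via the smoothing estimate \eqref{324}, and (c) feed this into the normalized-gradient-descent descent argument for the true objective $f$ and telescope. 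The IZO count is then a direct substitution of the stated parameters.

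For step (a), Lemma~\ref{zero1} is the zeroth-order analogue of the averaged-Lipschitz condition used in Lemma~\ref{lemm:aggregate}, except that it carries an extra additive term $2\mu^2(d+6)^3L^2$ instead of being purely quadratic; I would therefore re-derive the martingale-variance identity of Proposition~\ref{prop:aggregate}(i) directly with this affine bound. Summing over the at most $q$ steps since the last restart $k_0$ gives
\[
\E\big\|\vv^k-\nabla\hf(\x^k)\big\|^2
\le
\frac{q}{S_2}\Big(2(d+4)L^2\epsilon_1^2+2\mu^2(d+6)^3L^2\Big)
+
\E\big\|\vv^{k_0}-\nabla\hf(\x^{k_0})\big\|^2,
\]
where $\epsilon_1:=\max_k\|\x^{k+1}-\x^k\|$. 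Since $\eta^k\|\vv^k\|=\min\big(\epsilon/(Ln_0),\,\|\vv^k\|/(2Ln_0)\big)\le\epsilon/(Ln_0)$, we have $\epsilon_1\le\epsilon/(Ln_0)$, and plugging in the stated $q$, $S_2$, and the second branch of $\mu$ makes the first term $\cO(\epsilon^2)$. The anchor term $\E\|\vv^{k_0}-\nabla\hf(\x^{k_0})\|^2$ I would split into bias and variance of the coordinate-wise finite difference $v^{k_0}_j$: by Assumption~\ref{assu:main2}(ii') ($\|\nabla^2 f_i\|\le L$) a Taylor expansion gives a per-coordinate bias $\le\tfrac12\mu L$ toward $\partial_j f(\x^{k_0})$, hence an $\cO(\mu^2 dL^2)$ total squared bias, which the first branch of $\mu$ makes $\cO(\epsilon^2)$, while the gap $\|\nabla f(\x^{k_0})-\nabla\hf(\x^{k_0})\|\le\tfrac{\mu}{2}L(d+3)^{3/2}$ is $\cO(\epsilon)$ by the second branch; the variance is $\le(\sigma^2+\text{smoothing error})/S_1'$ with $S_1'=S_1/d=\Theta(\sigma^2\epsilon^{-2})$ (in the finite-sum case $S_1'=n$ and the variance is zero), again $\cO(\epsilon^2)$. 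This gives $\E\|\vv^k-\nabla\hf(\x^k)\|^2\le\cO(\epsilon^2)$ for every $k$; combined once more with \eqref{324} and the choice of $\mu$ it upgrades to $\E\|\vv^k-\nabla f(\x^k)\|^2\le\cO(\epsilon^2)$, which is step (b).

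For step (c), $f$ is $L$-smooth by Assumption~\ref{assu:main2}(ii'), and the update $\x^{k+1}=\x^k-\eta^k\vv^k$ with $\eta^k=\min\big(\epsilon/(Ln_0\|\vv^k\|),\,1/(2Ln_0)\big)$ is exactly the OPTION~II rule of Algorithm~\ref{algo:SPIDER-SFO}; the case analysis on whether $\eta^k$ hits its cap, together with the $\cO(\epsilon)$ gradient-estimation error just established, yields a per-iteration decrease of the form $\E f(\x^{k+1})\le\E f(\x^k)-\frac{c\epsilon}{Ln_0}\,\E\|\nabla f(\x^k)\|+\cO(\epsilon^3/(Ln_0))$. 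Telescoping over $k=0,\dots,K-1$ with $K=\lfloor 4L\Delta n_0\epsilon^{-2}\rfloor+1$ and $f(\x^0)-f^*=\Delta$, then dividing by $K$ and using that $\tx$ is drawn uniformly, gives $\E\|\nabla f(\tx)\|\le 6\epsilon$. Finally, for the IZO count, each restart iteration costs $2$ evaluations per coordinate per sample, i.e.\ $2S_1$ calls (finite-sum: $2dn$, plus one initial full pass of $\cO(dn)$), and each remaining iteration costs $4$ evaluations per sampled pair, i.e.\ $\cO(S_2)$ calls; the total is $\tfrac{K}{q}\cdot 2S_1+\cO(K S_2)$ (plus $\cO(dn)$ in the finite-sum case), which under the online parameter choices in the statement is $\cO(dL\Delta\sigma\epsilon^{-3})$ and under the finite-sum choices is $\cO(dn+dL\Delta n^{1/2}\epsilon^{-2})$, i.e.\ $\cO(d\min(n^{1/2}\epsilon^{-2},\epsilon^{-3}))$ treating $L,\Delta,\sigma$ as constants.

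The main obstacle I anticipate is the simultaneous bookkeeping of the three dimension-dependent smoothing errors: the accumulated \SPIDER\ bias $(q/S_2)\mu^2(d+6)^3L^2$, the restart finite-difference bias $\mu^2 dL^2$, and the global smoothing gap $\mu L(d+3)^{3/2}$ must all be pushed below the $\cO(\epsilon^2)$ (resp.\ $\cO(\epsilon)$) threshold by the single two-branch choice of $\mu$ in the statement, while the factor $2(d+4)$ in Lemma~\ref{zero1} forces $S_1,S_2$ to carry an extra factor $d$ — which is precisely what produces the $d$ in the final complexity and must be verified not to degrade the $\epsilon$-dependence. A minor technical point is the affine (rather than purely quadratic) increment bound in Lemma~\ref{zero1}, which requires the one-line re-derivation of the martingale-variance bound indicated above instead of a black-box invocation of Lemma~\ref{lemm:aggregate}.
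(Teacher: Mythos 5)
Your proposal follows essentially the same route as the paper: track $\nabla\hf(\x^k)$ via the coordinate-wise anchor (paper's Lemma~\ref{zerok0}) plus the two-point-feedback martingale increment (paper's Lemma~\ref{zero1} and Lemma~\ref{zero5}, where the affine increment bound is indeed handled by re-running the Lemma~\ref{lemm:aggregate} calculation rather than quoting it), then pass to $\nabla f$ via \eqref{324} and feed the resulting $\cO(\ep^2)$ mean-square error into the OPTION~II descent lemma (Lemma~\ref{444}) and telescope. One tiny slip: the per-iteration additive error in the descent inequality is $\cO(\ep^2/(Ln_0))$, not $\cO(\ep^3/(Ln_0))$; after telescoping over $K=\Theta(L\Delta n_0\ep^{-2})$ steps and dividing by $K\ep/(Ln_0)$ this contributes $\cO(\ep)$, which is what the bound $\E\|\nabla f(\tx)\|\le 6\ep$ actually requires, so the conclusion is unaffected.
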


\section{Summary and Future Directions}\label{sec:summary}

We propose in this work the \SPIDER\ method for non-convex optimization.
Our \SPIDER-type algorithms for first-order and zeroth-order optimization have update rules that are reasonably simple and achieve excellent convergence properties.
However, there are still some important questions left.
For example, the lower bound results for finding a second-order stationary point are \textit{not} complete.
Specially, it is \textit{not} yet clear if our $\tilde\cO(\ep^{-3})$ for the on-line case and $\tilde\cO(n^{1/2} \epsilon^{-2})$ for the finite-sum case gradient cost upper bound for finding a second-order stationary point (when $n\ge \Omega(\ep^{-1})$) is \textit{optimal} or the gradient cost can be further improved, assuming both Lipschitz gradient and Lipschitz Hessian.
We leave this as a future research direction.

\paragraph{Acknowledgement}
The authors would like to thank NIPS Reviewer 1 to point out a mistake in  the original proof of Theorem \ref{theo:FSOone} and thank Zeyuan Allen-Zhu and Quanquan Gu for relevant discussions and pointing out references \citet{zhou2018stochastic,zhou2018finding}, also Jianqiao Wangni for pointing out references \citet{nguyen2017sarah,nguyen2017stochastic}, and Zebang Shen,  Ruoyu Sun, Haishan Ye, Pan Zhou for very helpful discussions and comments.
Zhouchen Lin is supported by National Basic Research Program of China (973 Program) (grant no. 2015CB352502), National Natural Science Foundation (NSF) of China (grant nos. 61625301 and 61731018), and Microsoft Research Asia.

\bibliographystyle{apalike2}
\bibliography{SmileSGD}

\pagebreak\appendix
\section{Vector-Martingale Concentration Inequality}\label{sec:pinelis}

In this and next section, we sometimes denote for brevity that $\EE_{k}[\cdot] = \EE[\cdot \mid x_{0:k}]$, the expectation operator conditional on $x_{0:k}$, for an arbitrary $k\ge 0$.

\paragraph{Concentration Inequality for Vector-valued Martingales}
We apply a result by \citet{pinelis1994optimum} and conclude Proposition \ref{proh} which is an Azuma-Hoeffding-type concentration inequality.
See also \citet{kallenberg1991some}, Lemma 4.4 in \citet{zhang2005learning} or Theorem 2.1 in \cite{zhang2005learning} and the references therein.

\begin{proposition}[Theorem 3.5 in \citet{pinelis1994optimum}]\label{proh}
	Let $\bm\ep_{1:K} \in \RR^d$ be a vector-valued martingale difference sequence with respect to $\cF_k$, i.e., for each $k=1,\dots,K$, $\EE[\bm\ep_k \mid \cF_{k-1}] = 0$ and $\|\ep_k\|^2 \leq B_k^2$. We have 
	\beq\label{azumain}
	\PP \left(
	\left\|
	\sum_{k=1}^{K} \bm\ep_k
	\right\|
	\ge \lambda \right)
	\le
	4 \exp \left(
	-\frac{\lambda^2}{4 \sum_{k=1}^{K} B_k^2} 
	\right),
	\eeq
	where $\lambda$ is an arbitrary real positive number.
\end{proposition}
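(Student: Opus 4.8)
The plan is to note that this is exactly Theorem~3.5 of \citet{pinelis1994optimum} applied to the Hilbert space $(\RR^d,\|\cdot\|_2)$, whose $2$-smoothness constant is $1$ by the parallelogram identity $\|x+y\|^2+\|x-y\|^2=2\|x\|^2+2\|y\|^2$, and to recall the short argument so the appendix is self-contained. Write $S_k:=\sum_{j=1}^{k}\bm\ep_j$. First I would reduce the tail bound to a dimension-free exponential-moment estimate by Chernoff: for any $\alpha>0$, monotonicity of $\cosh$ on $[0,\infty)$ and the elementary inequality $\cosh t\ge\tfrac12 e^{|t|}$ give
\[
\PP\big(\|S_K\|\ge\lambda\big)\;\le\;\frac{\EE\,\cosh(\alpha\|S_K\|)}{\cosh(\alpha\lambda)}\;\le\;2\,e^{-\alpha\lambda}\,\EE\,\cosh(\alpha\|S_K\|),
\]
so it suffices to show $\EE\,\cosh(\alpha\|S_K\|)\le\exp\!\big(\alpha^2\sum_{k=1}^{K}B_k^2\big)$, after which taking $\alpha=\lambda/\big(2\sum_k B_k^2\big)$ yields \eqref{azumain} (in fact with the slightly sharper leading constant $2$).

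The core is a one-step estimate of the form: for every fixed $x\in\RR^d$ and every $\RR^d$-valued $y$ with $\EE y=0$ and $\|y\|\le b$ almost surely,
\[
\EE\,\cosh(\alpha\|x+y\|)\;\le\;\cosh(\alpha\|x\|)\,\exp(\alpha^2 b^2).
\]
I would prove this by expanding $\|x+y\|^2=\|x\|^2+2\langle x,y\rangle+\|y\|^2$ --- the point at which the inner-product (equivalently $2$-smooth) structure is essential and is exactly what removes any dependence on $d$ --- decomposing $y$ into its components along and orthogonal to $x$, rewriting $\cosh(\alpha\|x+y\|)$ through $e^{\pm\alpha\|x+y\|}$, and bounding the exponentials by expressions affine in the bounded scalar $\langle x,y\rangle$ plus a remainder controlled by $\|y\|\le b$; then $\EE\langle x,y\rangle=0$ together with Hoeffding's lemma applied to the bounded, mean-zero scalar $\langle x,y\rangle$ closes the estimate. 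This is precisely the content of the auxiliary lemmas in \citet{pinelis1994optimum}.

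Granting the one-step estimate, the rest telescopes. Set $M_k:=\cosh(\alpha\|S_k\|)\exp\!\big(-\alpha^2\sum_{j=1}^{k}B_j^2\big)$. Conditioning on $\cF_{k-1}$ and applying the one-step estimate with $x=S_{k-1}$, $y=\bm\ep_k$, $b=B_k$ (legitimate since $\EE[\bm\ep_k\mid\cF_{k-1}]=0$ and $\|\bm\ep_k\|\le B_k$ a.s.) gives $\EE[M_k\mid\cF_{k-1}]\le M_{k-1}$, so $(M_k)_{k\ge 0}$ is a supermartingale and $\EE M_K\le M_0=1$; that is the desired exponential-moment bound, and the Chernoff step above then yields \eqref{azumain}. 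The main obstacle is, unsurprisingly, the one-step inequality: one must extract sub-Gaussian concentration from nothing more than mean-zero, norm-bounded increments while keeping every constant independent of $d$. A dimension-dependent fallback --- cover the unit sphere by a net, apply scalar Azuma--Hoeffding to each linear functional $\langle\theta,S_K\rangle$, and union-bound over the net --- is available but loses a factor exponential in $d$, which is exactly what the Hilbert-space argument of \citet{pinelis1994optimum} is designed to avoid.
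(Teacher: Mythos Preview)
Your proposal is correct and in fact goes well beyond what the paper does: the paper simply states this proposition as Theorem~3.5 of \citet{pinelis1994optimum} and cites it without reproducing any argument. Your sketch --- Chernoff via $\cosh$, the one-step supermartingale inequality $\EE[\cosh(\alpha\|x+y\|)]\le\cosh(\alpha\|x\|)\exp(\alpha^2 b^2)$ for mean-zero, norm-bounded $y$, and telescoping --- is exactly Pinelis's route, and your observation that the optimization yields leading constant $2$ rather than the paper's $4$ is correct.

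The only soft spot is that your justification of the one-step inequality is a narrative rather than a proof (``decomposing $y$ into its components along and orthogonal to $x$, \ldots\ Hoeffding's lemma \ldots\ closes the estimate''); Pinelis's actual argument for that step is somewhat delicate and does not reduce cleanly to scalar Hoeffding on $\langle x,y\rangle$ alone --- one needs a careful second-order comparison of $\cosh(\alpha\|x+y\|)$ against $\cosh(\alpha\|x\|)+\alpha\sinh(\alpha\|x\|)\langle x,y\rangle/\|x\|$ exploiting the $2$-smoothness of the Euclidean norm. Since you already flag this as ``precisely the content of the auxiliary lemmas in \citet{pinelis1994optimum}'' and the paper itself is content to cite, this is not a gap in your proposal so much as a place where the promised self-containedness would require more work than the sketch suggests.
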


Proposition \ref{proh} is not a straightforward derivation of one-dimensional Azuma's inequality.
The key observation of Proposition \ref{proh} is that, the bound on the right hand of \eqref{azumain} is \textit{dimension-free} (note the Euclidean norm version of $\RR^d$ is $(2,1)$-smooth).
Such dimension-free feature could be found as early as in \citet{kallenberg1991some}, uses the so-called \textit{dimension reduction
	lemma} for Hilbert space which is inspired from its continuum version proved in \citet{kallenberg1991some}.
Now, we are ready to prove Proposition \ref{prop:aggregate}.

\subsection{Proof of Proposition \ref{prop:aggregate}}
\begin{proof}[Proof of Proposition \ref{prop:aggregate}]
It is straightforward to verify from the definition of $\tilde{Q}$ in \eqref{tildeQ} that
\[
\tilde{Q}(\hx_{0:K} ) - Q(\hx^K )
=
\tilde{Q}(\hx^0 ) - Q(\hx^ 0 )
+
\sum_{k=1}^K \bxi_ k (\hx_{0:k} ) - (Q(\hx^k ) - Q(\hx^{k-1} ))
\]
is a martingale, and hence \eqref{mart_var} follows from the property of $L^2$ martingales \citep{DURRETT}.
\end{proof}

\subsection{Proof of Lemma \ref{lemm:aggregate}}
\begin{proof}[Proof of Lemma \ref{lemm:aggregate}]
For any \blue{$k> 0$}, we have from Proposition \ref{prop:aggregate} (by applying $\tilde{Q} = \cV$)
\beq\label{fir}
\E_k \|     \hV^{k} - \hB(\x^k)  \|^2
=
\E_k \|  \hB_{S_*}(\x^{k}) - \hB(\x^k)  - \hB_{S_*}(\x^{k-1})  + \hB(\x^{k-1})\|^2+  \| \hV^{k-1}  - \hB(\x^{k-1})    \|^2
.
\eeq
Then 
\begin{eqnarray}\label{sec}
&&\E_k \|  \hB_{S_*}(\x^{k}) - \hB(\x^k)  - \hB_{S_*}(\x^{k-1})  + \hB(\x^{k-1})\|^2\notag\\
&\overset{a}=&\frac{1}{\cS_*}\Ei \|  \hB_{i}(\x^{k}) - \hB(\x^k)  - \hB_{i}(\x^{k-1})  + \hB(\x^{k-1})\|^2\notag\\
&\overset{b}\leq&\frac{1}{\cS_*}\Ei \|  \hB_{i}(\x^{k}) - \hB_{i}(\x^{k-1})\|^2\notag\\
&\overset{\eqref{hAidist}}{\leq}&\frac{1}{\cS_*}L_\hB^2\Ei \| \x^{k}-\x^{k-1}\|^2\leq \frac{L_\hB^2 \epsilon_1^2}{\cS_*},
\end{eqnarray}
where in $\overset{a}=$ and $\overset{b}\leq$, we use Eq \eqref{hAidist2}, and \blue{$S_*$} are random sampled from $[n]$ with replacement.
Combining \eqref{fir} and \eqref{sec}, we have
\begin{eqnarray}
&&\E_k \|     \hV^{k} - \hB(\x^k)  \|^2 \leq \frac{L_\hB^2 \epsilon_1^2}{\cS_*} + \|   \hV^{k-1} - \hB(\x^{k-1})\|^2.
\end{eqnarray}
Telescoping the above display for $k' = k-1,\dots, 0$ and using the iterated law of expectation, we have
\begin{eqnarray}
\E \| \hV^{k} - \hB(\x^k)    \|^2 \leq \frac{ k L_\hB^2 \epsilon_1^2}{\cS_*} + \E\| \hV^0 - \hB(\x^0) \|^2.
\end{eqnarray}
\end{proof}

\section{Deferred Proofs}

\subsection{Proof of Lemma \ref{111}}
\begin{proof}[Proof of Lemma \ref{111}]  
For $k = k_0$, we have 
\begin{eqnarray}\label{k_01}
&&\E_{k_0} \| \vv^{k_0} -  \nabla f(\x^{k_0})\|^2\notag\\
& = & \E_{k_0}\| \nabla f_{S_1}(\x^{k_0}) - \nabla f(\x^{k_0})\|^2 
 \le
\frac{\sigma^2}{S_1} = \frac{\epsilon^2}{2}.
\end{eqnarray}
From Line \ref{line:update2} of Algorithm \ref{algo:SPIDER-SFO} we have for all $k\geq 0$,
\beq\label{lemm:100}
\|\x^{k+1} - \x^k\|
=
\min\left(\frac{\epsilon}{Ln_0\| \vv^k\|} ,\frac{1}{2Ln_0}\right) \|\vv^k\|\leq  \dfrac{\epsilon}{Ln_0}
. 
\eeq
Applying Lemma \ref{lemm:aggregate} with $\epsilon_1 = \epsilon / (L n_0)$, $S_2 = 2\sigma / (\epsilon n_0)$, $K = k - k_0  \leq q = \sigma n_0/\epsilon$, we have
\begin{eqnarray}
\E_{k_0}\|  \vv^{k} -  \nabla f(\x^{k}) \|^2
 \leq 
\frac{ \sigma n_0 L^2 }{\epsilon} 
\cdot
\frac{\epsilon^2}{L^2 n_0^2} 
\cdot
\frac{\epsilon n_0}{2\sigma}
 + \E_{k_0} \| \vv^{k_0} -  \nabla f(\x^{k_0})\|^2
 \overset{\eqref{k_01}}{=} 
\epsilon^2
,
\end{eqnarray}
completing the proof.
\end{proof}

\subsection{Proof of Expectation Results for FSP}
The rest of this section devotes to the proofs of Theorems \ref{theo:FSOone}, \ref{theo:FSOtwo}.
To prepare for them, we first conclude via standard analysis the following
 
\begin{lemma}\label{444}
Under the Assumption \ref{assu:main},  setting $k_0 = \lfloor k/q \rfloor \cdot q $, we have  
 	\begin{eqnarray}
 	\E_{k_0} \left[  f(\x^{k+1}) - f(\x^k) \right] \leq   -\frac{\epsilon }{4Ln_0}   \E_{k_0} \left\| \vv^k\right\| + \frac{3\epsilon^2}{4n_0 L }.
 	\end{eqnarray}
 \end{lemma}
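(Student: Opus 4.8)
\textbf{Proof plan for Lemma \ref{444}.}
The plan is to perform a one-step descent estimate using the $L$-smoothness of $f$ (which follows from Assumption \ref{assu:main}(ii) by Jensen, since the averaged Lipschitz bound implies $\|\nabla f(\x) - \nabla f(\y)\| \le L\|\x-\y\|$ for the full gradient $f$), and then to control the gradient-estimation error by invoking Lemma \ref{111}. First I would write, by the descent lemma applied to the update $\x^{k+1} = \x^k - \eta^k \vv^k$ from Line \ref{line:update2},
\[
f(\x^{k+1}) \le f(\x^k) + \langle \nabla f(\x^k), \x^{k+1}-\x^k\rangle + \frac{L}{2}\|\x^{k+1}-\x^k\|^2
= f(\x^k) - \eta^k \langle \nabla f(\x^k), \vv^k\rangle + \frac{L (\eta^k)^2}{2}\|\vv^k\|^2.
\]
Then I would use the decomposition $\langle \nabla f(\x^k), \vv^k\rangle = \|\vv^k\|^2 - \langle \vv^k - \nabla f(\x^k), \vv^k\rangle \ge \|\vv^k\|^2 - \|\vv^k - \nabla f(\x^k)\|\,\|\vv^k\|$, so that
\[
f(\x^{k+1}) \le f(\x^k) - \eta^k \|\vv^k\|^2 + \eta^k \|\vv^k - \nabla f(\x^k)\|\,\|\vv^k\| + \frac{L(\eta^k)^2}{2}\|\vv^k\|^2.
\]

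Next I would split into the two cases built into the stepsize $\eta^k = \min\left(\frac{\epsilon}{Ln_0\|\vv^k\|}, \frac{1}{2Ln_0}\right)$. When $\|\vv^k\| \ge \epsilon$, we have $\eta^k = \frac{\epsilon}{Ln_0\|\vv^k\|}$, and plugging in gives $-\eta^k\|\vv^k\|^2 = -\frac{\epsilon\|\vv^k\|}{Ln_0}$, $\frac{L(\eta^k)^2}{2}\|\vv^k\|^2 = \frac{\epsilon^2}{2Ln_0^2} \le \frac{\epsilon\|\vv^k\|}{2Ln_0}$, and $\eta^k\|\vv^k - \nabla f(\x^k)\|\,\|\vv^k\| = \frac{\epsilon}{Ln_0}\|\vv^k - \nabla f(\x^k)\|$; when $\|\vv^k\| < \epsilon$ (so $\eta^k$ could be either branch, but in any case $\eta^k \le \frac{1}{2Ln_0}$) I would bound the relevant terms by absolute constants times $\epsilon^2/(Ln_0)$ and crude bounds involving $\|\vv^k\|$. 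Combining the two cases, one obtains a pointwise inequality of the shape
\[
f(\x^{k+1}) - f(\x^k) \le -\frac{\epsilon}{2Ln_0}\|\vv^k\| + \frac{\epsilon}{Ln_0}\|\vv^k - \nabla f(\x^k)\| + \frac{C\epsilon^2}{Ln_0}
\]
for a small explicit constant. Then I would take conditional expectation $\E_{k_0}[\cdot]$ and apply Lemma \ref{111} together with Jensen's inequality, $\E_{k_0}\|\vv^k - \nabla f(\x^k)\| \le \sqrt{\E_{k_0}\|\vv^k - \nabla f(\x^k)\|^2} \le \epsilon$, which turns the middle term into $\frac{\epsilon^2}{Ln_0}$ and is absorbed into the $O(\epsilon^2/(Ln_0))$ term, yielding $\E_{k_0}[f(\x^{k+1})-f(\x^k)] \le -\frac{\epsilon}{4Ln_0}\E_{k_0}\|\vv^k\| + \frac{3\epsilon^2}{4n_0 L}$ after tracking the exact constants.

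The main obstacle, or at least the part requiring care, is the bookkeeping of constants across the two stepsize branches so that the coefficient of $\E_{k_0}\|\vv^k\|$ comes out as exactly $\frac{\epsilon}{4Ln_0}$ (rather than $\frac{\epsilon}{2Ln_0}$) and the additive term as exactly $\frac{3\epsilon^2}{4n_0L}$; in particular one must check the $\|\vv^k\| < \epsilon$ regime, where the "descent" term $-\eta^k\|\vv^k\|^2$ may be small, does not spoil the claimed bound — here one uses that $\|\vv^k\| < \epsilon$ makes the negative term harmless to drop while $-\frac{\epsilon}{4Ln_0}\|\vv^k\| \ge -\frac{\epsilon^2}{4Ln_0}$ can be moved into the additive constant. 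A secondary point is to make sure Lemma \ref{111}'s conditioning (expectation over $\x_{(k_0+1):k}$ given $\x_{0:k_0}$) is exactly what is needed when taking $\E_{k_0}$ of the pointwise bound, which it is since $k_0 = \lfloor k/q\rfloor\cdot q$ matches the epoch anchor used there.
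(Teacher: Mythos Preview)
Your plan is sound and would produce a valid one-step descent bound of the right shape, but it differs from the paper's argument in two places and, as you suspect, does not land on exactly the stated constants. First, where you apply Cauchy--Schwarz to obtain the \emph{linear} cross term $\eta^k\|\vv^k-\nabla f(\x^k)\|\,\|\vv^k\|$ and then invoke Jensen on Lemma~\ref{111}, the paper instead uses Young's inequality $-\eta^k\langle\nabla f(\x^k)-\vv^k,\vv^k\rangle\le \tfrac{\eta^k}{2}\|\vv^k-\nabla f(\x^k)\|^2+\tfrac{\eta^k}{2}\|\vv^k\|^2$, so the error enters \emph{squared} and Lemma~\ref{111} applies directly, contributing exactly $\tfrac{\epsilon^2}{4Ln_0}$. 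Second, instead of your two-case split (note the branch switches at $\|\vv^k\|=2\epsilon$, not $\epsilon$), the paper handles both regimes at once via the elementary inequality $\min(|x|,x^2/2)\ge |x|-2$ applied to $x=\|\vv^k\|/\epsilon$, which converts $\tfrac14\eta^k\|\vv^k\|^2$ into $\tfrac{\epsilon\|\vv^k\|-2\epsilon^2}{4Ln_0}$ in one line. These two choices are what make the constants come out as precisely $-\tfrac{\epsilon}{4Ln_0}$ and $+\tfrac{3\epsilon^2}{4Ln_0}$; your route gives a comparable inequality (e.g.\ $-\tfrac{\epsilon}{2Ln_0}\E_{k_0}\|\vv^k\|+\tfrac{2\epsilon^2}{Ln_0}$ or similar) that is neither stronger nor weaker, and is equally sufficient for the downstream theorems, just with different numerical factors.
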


\begin{proof}[Proof of Lemma \ref{444}]
 From Assumption \ref{assu:main} (\ref{a2}), we have
 \begin{eqnarray}
  \| \nabla f(\x) - \nabla f(\y) \|^2 =   \left\| \E_i \left(\nabla f_i(\x) - \nabla f_i(\y)\right) \right\|^2\leq  \E_i \|\nabla f_i(\x) - \nabla f_i(\y) \|^2 \leq L^2 \|\x - \y\|^2.
 \end{eqnarray} 
So $f(\x)$ has $L$-Lipschitz continuous gradient, then
\begin{eqnarray}\label{L-smooth}
f(\x^{k+1}) \notag
&\leq&
 f(\x^k)+\<\nabla f(\x^k), \x^{k+1}-\x^k \>+\frac{L}{2}\left\|\x^{k+1}-\x^k\right\|^2\\
&=& 
 f(\x^k) - \eta^k \<\nabla f(\x^k), \vv^k \> +\frac{L(\eta^k)^2}{2}\left\|\vv^k\right\|^2\notag\\
&=&
 f(\x^k) -\eta^k \left(1-\frac{\eta^k L}{2}\right)\left\|\vv^k\right\|^2- \eta^k\< \nabla f(\x^k)-\vv^k,\vv^k\>\notag\\
&\overset{a}{\leq}& f(\x^k)  -\eta^k\left(\frac{1}{2}-\frac{\eta^k L}{2}\right)\left\|\vv^k  \right\|^2+\frac{\eta^k}{2}\left\|\vv^k - \nabla f(\x^k)\right\|^2, 
\end{eqnarray}
where in $\overset{a}\leq$, we applied Cauchy-Schwarz inequality.  
Since $\eta^k= \min\left(\frac{\epsilon}{Ln_0\| \vv^k\|} ,\frac{1}{2Ln_0}\right) \leq \frac{1}{2Ln_0}\leq \frac{1}{2L}$, we have
\begin{eqnarray}
\eta^k\left(\frac{1}{2}-\frac{\eta^k L}{2}\right)\left\|\vv^k  \right\|^2
 \geq 
\frac{1}{4} \eta^k \left\| \vv^k\right\|^2= \frac{\epsilon^2}{8n_0L } \min\left( 2\left\| \frac{\vv^k} {\epsilon}\right\|, \left\| \frac{\vv^k}{\epsilon}\right\|^2  \right)\overset{a}{\geq}  \frac{\epsilon\| \vv^k\|- 2\epsilon^2}{4n_0L },
\end{eqnarray}
where in $\overset{a}\geq$, we use $V(x) = \min \left(| x|, \frac{x^2}{2}\right) \geq |x | - 2$ for all $x$. Hence
\begin{eqnarray}
f(\x^{k+1})
& \leq& 
f(\x^k) - \frac{\epsilon\| \vv^k\|}{4Ln_0} + \frac{\epsilon^2}{2n_0L} + \frac{\eta^k}{2}\left\|\vv^k - \nabla f(\x^k)\right\|^2\notag\\
&\overset{\eta^k\leq \frac{1}{2Ln_0} }{\leq}& 
f(\x^k) - \frac{\epsilon\| \vv^k\|}{4Ln_0} + \frac{\epsilon^2}{2n_0L} + \frac{1}{4Ln_0}\left\|\vv^k - \nabla f(\x^k)\right\|^2
.
\end{eqnarray}
Taking expectation on the above display and using Lemma \ref{111}, we have
\begin{eqnarray}
\E_{k_0} f(\x^{k+1}) - \E_{k_0} f(\x^k) \leq   -\frac{\epsilon }{4Ln_0}   \E_{k_0} \left\| \vv^k\right\|+\frac{3\epsilon^2}{4Ln_0}.
\end{eqnarray}
\end{proof}

The proof is done via the following lemma:
\begin{lemma}\label{lemma10u}
	Under Assumption \ref{assu:main}, for all $k\geq 0$,   we have 
	\begin{eqnarray}\label{lemma10}
	\E \|\nabla f(\x^k)\| \leq \E \|\vv^k \|+ \epsilon.
	\end{eqnarray}
	
\end{lemma}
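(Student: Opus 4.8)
\textbf{Proof plan for Lemma \ref{lemma10u}.}
The claim is that in expectation, $\|\nabla f(\x^k)\|$ exceeds $\|\vv^k\|$ by at most $\epsilon$. The natural approach is to compare the two quantities pointwise using the triangle inequality, then control the resulting gap by the accuracy guarantee already established for the \SPIDER\ estimator $\vv^k$. Concretely, first I would write, for each fixed $k$,
\[
\|\nabla f(\x^k)\| \le \|\vv^k\| + \|\nabla f(\x^k) - \vv^k\|,
\]
which holds deterministically. Taking expectations yields $\E\|\nabla f(\x^k)\| \le \E\|\vv^k\| + \E\|\nabla f(\x^k) - \vv^k\|$, so everything reduces to showing $\E\|\nabla f(\x^k) - \vv^k\| \le \epsilon$.

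For the remaining term I would invoke Lemma \ref{111}, which gives $\E\big[\|\vv^k - \nabla f(\x^k)\|^2 \mid \x_{0:k_0}\big] \le \epsilon^2$ with $k_0 = \lfloor k/q\rfloor \cdot q$. Applying the tower property (iterated expectation) over the randomness of $\x_{0:k_0}$ gives the unconditional bound $\E\|\vv^k - \nabla f(\x^k)\|^2 \le \epsilon^2$. Then Jensen's inequality (equivalently, the fact that $\E\|Z\| \le (\E\|Z\|^2)^{1/2}$ for any random vector $Z$) converts the second-moment bound into the first-moment bound:
\[
\E\|\vv^k - \nabla f(\x^k)\| \le \sqrt{\E\|\vv^k - \nabla f(\x^k)\|^2} \le \epsilon.
\]
Combining with the display from the first paragraph completes the proof.

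I do not anticipate a genuine obstacle here — the lemma is essentially a bookkeeping step that repackages the $L^2$ gradient-tracking guarantee of Lemma \ref{111} into an $L^1$ statement suitable for telescoping function-value decrease arguments (as in Lemma \ref{444} and the proof of Theorem \ref{theo:FSOone}). The only mild subtlety worth stating carefully is the conditioning: Lemma \ref{111} is conditional on $\x_{0:k_0}$, so one must explicitly take the outer expectation before applying Jensen, rather than applying Jensen under the conditional expectation and then hoping the bound survives — though in fact either order works here since $\sqrt{\cdot}$ is concave and the bound $\epsilon^2$ is a deterministic constant. For the online case the bound holds with the parameters in \eqref{inputOne}; for the finite-sum case one uses the parameters in \eqref{inputTwo}, and the identical argument applies since Lemma \ref{111}-type estimates hold there as well.
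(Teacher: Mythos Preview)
Your proposal is correct and matches the paper's own proof essentially step for step: both take total expectation in Lemma \ref{111} to obtain $\E\|\vv^k-\nabla f(\x^k)\|^2\le\epsilon^2$, apply Jensen to pass to the first moment, and combine with the triangle inequality. The only cosmetic difference is ordering (you apply the triangle inequality first, the paper last), which is immaterial.
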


\begin{proof}
By taking the total expectation in Lemma \ref{111}, we have
\begin{eqnarray}
\E\|\vv^{k} -\nabla f(\x^k) \|^2 \leq \epsilon^2.
\end{eqnarray}
Then  by Jensen's inequality
$$
\left(
\E \|\vv^{k} -\nabla f(\x^k)\|
\right)^2
 \le
\E \|\vv^{k} -\nabla f(\x^k)  \|^2
 \leq 
\epsilon^2
. 
$$
So using triangle inequality
\begin{eqnarray}
\E\|  \nabla f(\x^k) \|
& =& \E \| \vv^{k}  - (\vv^{k} -\nabla f(\x^k)  )   \|\notag\\
&\leq&  \E \| \vv^{k}  \| + \E \| \vv^{k} -\nabla f(\x^k)  \|\leq \E \| \vv^{k}  \| + \epsilon.
\end{eqnarray}
This completes our proof.
\end{proof}

Now, we are ready to prove Theorem \ref{theo:FSOone}.

\begin{proof}[Proof of Theorem \ref{theo:FSOone}]
Taking full expectation on Lemma \ref{444}, and telescoping the results from $k=0$ to $K-1$, we have
\begin{eqnarray}\label{end1}
\frac{\epsilon}{4Ln_0}\sum_{k=0}^{K-1}\E \|\vv^k \| \leq f(\x^0) - \E f(\x^K) + \frac{3K\epsilon^2}{4Ln_0}\overset{\E f(\x^K)\geq f^*}{\leq} \Delta + \frac{3K\epsilon^2}{4Ln_0}.
\end{eqnarray}	
Diving  $\frac{4Ln_0}{\epsilon}K$  both sides of \eqref{end1},  and using $K = \lfloor\frac{4L\Delta n_0}{\epsilon^2}\rfloor+1 \geq \frac{4L\Delta n_0}{\epsilon^2}$, we have
\begin{eqnarray}\label{end2}
\frac{1}{K}\sum_{k=0}^{K-1}\E \|\vv^k \| \leq \Delta \cdot  \frac{4Ln_0}{\epsilon}\frac{1}{K} +3\epsilon\leq 4\epsilon.
\end{eqnarray}

Then from the choose of $\tx$, we have
\begin{eqnarray}\label{final}
\E \| \nabla f(\tx)\| =  \frac{1}{K}\sum_{k=0}^{K-1}\E \|\nabla f(\x^k) \| \overset{\eqref{lemma10}}{\leq} \frac{1}{K}\sum_{k=0}^{K-1}\E \| \vv^k \| + \epsilon \overset{\eqref{end2}}{\leq} 5\epsilon.
\end{eqnarray}

To compute the gradient cost, note in each $q$ iterations we access for one time $S_1$ stochastic gradients and for $q$ times of $S_2$ stochastic gradients, and hence the cost is 
\begin{eqnarray}
\left\lceil K \cdot \frac{1}{q}\right\rceil S_1 
+
K S_2 
&\overset{S_1 = qS_2}{\le}&
2K \cdot S_2 + S_1  \notag\\
&\leq& 
2\left(\frac{4Ln_0\Delta}{\epsilon^2}  \right)  \frac{2\sigma}{\epsilon n_0} + \frac{2\sigma^2}{\epsilon^2}+2S_2\notag\\
&=&\frac{16L\sigma \Delta}{\epsilon^3} +\frac{2\sigma^2}{\epsilon^2}+ \frac{4\sigma}{n_0\epsilon}.
\end{eqnarray}
This concludes a gradient cost of $16L\Delta\sigma\epsilon^{-3}+2\sigma^2\epsilon^{-2}+ 4\sigma n_0^{-1}\epsilon^{-1} $.
\end{proof}

\begin{proof}[Proof of Theorem \ref{theo:FSOtwo}]
For Lemma \ref{111}, we have 
\begin{eqnarray}
\E_{k_0} \| \vv^{k_0} -  \nabla f(\x^{k_0})\|^2
=
 \E_{k_0}\| \nabla f(\x^{k_0}) - \nabla f(\x^{k_0})\|^2  = 0
 .
\end{eqnarray}
With the above display, applying Lemma \ref{lemm:aggregate} with $\epsilon_1 =\frac{\epsilon}{L n_0}$, and $S_2 = \frac{n^{1/2}}{\epsilon n_0}$, $K = k -k_0  \leq q = n_0n^{1/2} $, we have
\begin{eqnarray}
\E_{k_0}\|  \vv^{k_0} -  \nabla f(\x^{k_0}) \|^2
 \leq  
n_0n^{1/2} L^2  \cdot\frac{\epsilon^2}{L^2 n_0^2}\cdot\frac{\epsilon n_0}{n^{1/2}}+ \E_{k_0} \| \vv^{k_0} -  \nabla f(\x^{k_0})\|^2
 \overset{\eqref{k_01}}{=} 
\epsilon^2
.
\end{eqnarray}
So  Lemma \ref{111} holds.  Then from the same technique of on-line case, we can obtain \eqref{lemm:100} and \eqref{lemma10u}, and \eqref{final}.
The gradient cost analysis is computed  as:
\begin{eqnarray}
\left\lceil K \cdot \frac{1}{q}\right\rceil S_1 
+
K S_2 
&\overset{S_1 = qS_2}{\le}&
2K + S_1  \notag\\
&\leq& 
2\left(\frac{4Ln_0\Delta}{\epsilon^2}  \right)  \frac{n^{1/2}}{n_0} + n+2 S_2
\notag\\
&=&\frac{8(L\Delta) \cdot n^{1/2} }{\epsilon^2} + n+ \frac{2n^{1/2}}{n_0}.
\end{eqnarray}
This concludes a gradient cost of $n+	8(L\Delta) \cdot n^{1/2} \epsilon^{-2}+  2n^{-1}_0 n^{1/2} $.
\end{proof}

\subsection{Proof of High Probability Results for FSP}
Set $\cK$ be the time when Algorithm \ref{algo:SPIDER-SFO} stops. We have $\cK=0$ if $\|\vv^0 \|<2\ep$, and   $\cK = \inf\{k \ge 0: \|\vv^k \| < 2\ep \}+1$ if $\|\vv^0 \|\geq 2\ep$.
It is a random stopping time. Let $K_0 = 	\lfloor4L\Delta n_0 \epsilon^{-2}\rfloor +2$. We have the following lemma:

\begin{lemma}\label{h111}
	Set the parameters $S_1$, $S_2$, $\eta$, and $q$ as in Theorem \ref{theo:FSOonehp}. 
	Then under the Assumption \ref{assu:main2},   for fixed $K_0$, define the event:
	\[
	\cH_{K_0} =
	\left(
	\| \vv^k -\nabla f(\x^k)  \|^2  \leq  \epsilon \cdot \cep
	,\quad
	\forall k \le \min(\cK, K_0)
	\right)
	.
	\]
we have $\cH_{K_0}$	occurs with probability at least $1-p$.
\end{lemma}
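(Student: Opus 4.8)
The plan is to fix each index $k\le K_0$, realize the SPIDER error $\vv^k-\nabla f(\x^k)$ as a sum of bounded vector-valued martingale differences, apply the Pinelis-type concentration inequality of Proposition \ref{proh}, and then take a union bound over $k$. A preliminary reduction lets me remove the random stopping time $\cK$: I would run the ``virtual'' version of Algorithm \ref{algo:SPIDER-SFO} (OPTION I) that never executes RETURN, so that $\|\x^{k+1}-\x^k\|=\eta$ holds for every $k$ by Line \ref{line:update}; since its iterates coincide with the true ones for all $k\le\min(\cK,K_0)$, a bound valid for every virtual $k\in\{0,\dots,K_0\}$ immediately yields the claim about $\cH_{K_0}$.

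Fixing $k$ and setting $k_0=\lfloor k/q\rfloor q$, I would telescope the recursion of Line \ref{line5} inside the epoch to write
\[
\vv^k-\nabla f(\x^k)=\bigl(\nabla f_{S_1}(\x^{k_0})-\nabla f(\x^{k_0})\bigr)+\sum_{j=k_0+1}^{k}\bigl(\nabla f_{S_2}(\x^j)-\nabla f_{S_2}(\x^{j-1})-\nabla f(\x^j)+\nabla f(\x^{j-1})\bigr),
\]
and then \emph{split each mini-batch average into its individual sample increments}: the $k_0$-term is the sum of the $S_1$ increments $\tfrac1{S_1}(\nabla f_i(\x^{k_0})-\nabla f(\x^{k_0}))$, each of norm at most $\sigma/S_1$ by Assumption \ref{assu:main2}(iii'), and the $j$-th bracket is the sum of the $S_2$ increments $\tfrac1{S_2}\bigl(\nabla f_i(\x^j)-\nabla f_i(\x^{j-1})-\nabla f(\x^j)+\nabla f(\x^{j-1})\bigr)$, each of norm at most $2L\eta/S_2$ by Assumption \ref{assu:main2}(ii') together with $\|\x^j-\x^{j-1}\|=\eta$. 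Ordering these increments in time (the $S_1$ draws at $k_0$, then the $S_2$ draws at $k_0+1,\dots,k$) and using that each mini-batch is drawn afresh, independently of the already-determined iterates, makes each increment conditionally mean-zero given its predecessors, i.e.\ a genuine martingale-difference sequence; the sum of the squared uniform bounds is
\[
B^2:=\frac{\sigma^2}{S_1}+(k-k_0)\frac{4L^2\eta^2}{S_2}\le\frac{\sigma^2}{S_1}+\frac{4qL^2\eta^2}{S_2}=\frac{\epsilon^2}{2}+2\epsilon^2=\frac{5\epsilon^2}{2},
\]
the numerical value being obtained by substituting $S_1,S_2,\eta,q$ from \eqref{inputOne}, mirroring the estimate in the proof of Lemma \ref{111}.

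Proposition \ref{proh} then gives, for any $\lambda>0$, $\PP(\|\vv^k-\nabla f(\x^k)\|\ge\lambda)\le 4\exp(-\lambda^2/(4B^2))\le 4\exp(-\lambda^2/(10\epsilon^2))$; taking $\lambda=\sqrt{\epsilon\cep}$ with $\cep=10\epsilon\log((4K_0+4)p^{-1})$ makes the right-hand side equal to $p/(K_0+1)$, so a union bound over $k\in\{0,\dots,K_0\}$ shows that $\|\vv^k-\nabla f(\x^k)\|^2\le\epsilon\cep$ for all such $k$ with probability at least $1-p$; restricting back to the actual run proves $\PP(\cH_{K_0})\ge 1-p$. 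The companion finite-sum statement used by Theorem \ref{theo:FSOtwohp} is handled identically, except that the $k_0$-term drops out (the full gradient is computed exactly) and one substitutes the parameters of \eqref{inputTwo}.

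The step I expect to be delicate is the one just described — extracting the sub-sampling variance reduction. Bounding $\|\nabla f_{S_2}(\x^j)-\nabla f_{S_2}(\x^{j-1})-\nabla f(\x^j)+\nabla f(\x^{j-1})\|$ crudely by $2L\eta$ would give $B^2=\Theta(\epsilon)$ instead of $\Theta(\epsilon^2)$ and destroy the intended rate, so one must genuinely resolve every mini-batch into its $S_2$ (resp.\ $S_1$) martingale increments and verify the conditional-mean-zero property at that granularity, keeping track of the filtration generated by the samples. A secondary but routine point is arguing that a bound proved on the virtual run transfers to the run actually stopped at $\cK$.
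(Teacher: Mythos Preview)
Your proposal is correct and follows essentially the same route as the paper: virtual extension past the stopping time, telescoping of $\vv^k-\nabla f(\x^k)$ to the epoch start $k_0$, resolution of each mini-batch into per-sample martingale increments with uniform bounds $\sigma/S_1$ and $2L\eta/S_2$, application of Proposition~\ref{proh}, and a union bound over $k\in\{0,\dots,K_0\}$. The only cosmetic difference is that the paper's virtual extension freezes the iterates ($\x^{k+1}=\x^k$) after the stop, giving $\|\x^{k+1}-\x^k\|\le\eta$ rather than $=\eta$; this is immaterial for the bound and sidesteps the degenerate case $\vv^k=\mathbf 0$.
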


\begin{proof}[Proof of Lemma \ref{h111}]  
	Because when $k \geq \cK$, the algorithm has already stopped. So if $ \cK\leq k \leq K_0$, we can define  a virtual update as $\x^{k+1} = \x^k$, and $\vv^k$ is still generated by Line \ref{line3} and Line  \ref{line5} in Algorithm \ref{algo:SPIDER-SFO}.  
	
	Then let the event $\tH_{k} =  \left(
	\| \vv^k -\nabla f(\x^k)  \|^2  \leq  \epsilon  \cdot \cep \right),$ with $0\leq k\leq K_0$. We want to prove that for any $k$ with  $0\leq k \leq K_0$, $\tH_{k}$ occurs with probability at least $1 -p/(K_0+1)$.  If so,   using  the fact that  
	$$\cH_{K_0} \supseteq  \left(
	\| \vv^k -\nabla f(\x^k)  \|^2  \leq   \epsilon  \cdot \cep
	,\quad
	\forall k \le  K_0
	\right) = \bigcap_{k=0}^{K_0} (\tH_k) ,  $$
	we have $$\PP(\cH_{K_0})\geq  \PP\left(\bigcap_{k=0}^{K_0} (\tH_k) \right) = \PP\left( \left(\bigcup_{k=0}^{K_0} (\tH_k)^c   \right)^c \right) \geq   1 - \sum_{k=0}^{K_0}\PP(\tH_k^c)  =   1 - p.$$ 	
	
	We prove that  $\tH_{k}$ occurs with probability $1 -p/(K_0+1)$ for any $k$ with $0\leq k \leq K_0$.

	Let $\bxi^k$ with $k\geq0$ denote the randomness in maintaining  \SPIDER\ $\vv^k$ at iteration $k$. And $\cF^k = \sigma\{ \bxi^0, \cdots \bxi^{k}\}$, where $\sigma\{\cdot\}$ denotes the sigma field. We know that $\x^k$  and $\vv^{k-1}$  are  measurable on $\cF^{k-1}$.

	Then given $\cF^{k-1}$,  if  $k = \lfloor k/q \rfloor q$, we set 
	$$\bm\epsilon_{k, i} = \frac{1}{S_1}\left(\nabla f_{\mathcal{S}_1(i)}(\x^k) -\nabla f(\x^k)\right) $$
	where $i$ is the index  with $\mathcal{S}_1(i)$ denoting the $i$-th random component function selected at iteration $k$ and $1\leq i \leq S_1$.  We have 
	$$\EE [\bm\epsilon_{k, i}|\cF^{k-1}] = 0, \quad \|\bm\epsilon_{k, i}\|\overset{Assum. \ref{assu:main2} (iii')}{\leq} \frac{\sigma}{S_1} .   $$
	
	Then from  Proposition \ref{proh}, we have
	\begin{eqnarray}
	&&\PP\left( \| \vv^k - \nabla f(\x^k)\|^2 \geq \epsilon\cdot \cep    \mid\cF^{k-1}\right) = \PP\left(  \left\| \sum_{i=1}^{S_1}\bm\epsilon_{k,i}\right\|^2
	\geq \epsilon\cdot \cep \mid \cF^{k-1}\right)\notag\\
	&\leq& 4\exp\left( -\frac{\epsilon \cdot \cep}{4 S_1 \frac{\sigma^2}{S_1^2}} \right)\overset{S_1 =   \frac{2\sigma^2}{\epsilon^2},  ~ \cep = 10\epsilon\log(4(K_0+1)/p)   }{\leq} \frac{p}{K_0+1}.
	\end{eqnarray}
	So $\PP\left( \| \vv^k - \nabla f(\x^k)\|^2 \geq \epsilon \cdot \cep   \right) \leq \frac{p}{K_0+1} $. 
	
	When $k\neq\lfloor k/q \rfloor q $, set $k_0 =\lfloor k/q \rfloor q $, and 
	$$\bm\epsilon_{j, i} = \frac{1}{S_2}\left(\nabla f_{\mathcal{S}_2(i)}(\x^j)  - \nabla f_{\mathcal{S}_2(i)}(\x^{j-1}) -\nabla f(\x^j) + \nabla f(\x^{j-1}) \right)$$
	where $i$ is the index  with $\mathcal{S}_2(i)$ denoting the $i$-th random component function selected at iteration $k$, $1\leq i \leq S_2$ and $k_0\leq j \leq k$.  We have 
	$$\EE [\bm\epsilon_{j, i}|\cF^{j-1}] = 0.$$ 
	For any $\x$ and $\y$, we have
	\begin{eqnarray}\label{lipf}
	&&\|\nabla f(\x) - \nabla f(\y) \| = \left\|\frac{1}{n}\sum_{i=1}^n \left( \nabla f_i(\x) - \nabla f_i(\y)\right)\right\|\notag\\&\leq& \frac{1}{n}\sum_{i=1}^n\|\nabla f_i(\x) -\nabla  f_i(\y) \| \overset{Assum. \ref{assu:main2} ~(ii')}{\leq}  L\| \x - \y\|,
	\end{eqnarray}
	So $f(\x)$ also have $L$-Lipschitz continuous gradient.
	
	Then from the update rule if $k < \cK$, we have 
	$\|\x^{k+1} - \x^k\| = \|\eta (\vv^k/\| \vv^k\|) \| = \eta =\frac{\epsilon}{Ln_0} $, if $k\geq \cK$, we have $\|\x^{k+1} - \x^k\|=0\leq \frac{\epsilon}{Ln_0} $.   We have 
	\begin{eqnarray}\label{xbound}
	&&\|\epsilon_{j,i} \|\notag\\
	&\leq& \frac{1}{S_2} \left(\left\| \nabla f_i(\x^j) - \nabla f_i(\x^{j-1})\right\| + \left\| \nabla f(\x^j) - \nabla f(\x^{j-1}) \right\|\right)\notag\\
	& \overset{ \eqref{lipf}, ~ Assum. \ref{assu:main2} ~  (ii')}{\leq}& \frac{2L}{S_2}\| \x^j - \x^{j-1}\| \leq \frac{2\epsilon}{S_2 n_0},
	\end{eqnarray}
	for all $k_0<j\leq k$ and $1\leq i \leq S_2$. On the other hand, we have
	\begin{eqnarray}\label{fbound}
	&&\|\vv^k  - \nabla f(\x^k) \|\notag\\
	& = &\| \nabla f_{S_2} (\x^k) -\nabla f_{S_2} (\x^{k-1})   - \nabla f(\x^k) + \nabla f(\x^{k-1})  +  (\vv^{k-1} - \nabla f(\x^{k-1}))  \|\notag\\
	&=&\left\| \sum_{j = k_0+1}^k   \left(\nabla f_{S_2} (\x^k) -\nabla f_{S_2} (\x^{k-1})   - \nabla f(\x^k) + \nabla f(\x^{k-1})\right) + \nabla f_{S_1} (\x^{k_0})- \nabla f(\x^{k_0})   \right\|\notag\\
	&=&\left\| \sum_{j = k_0+1}^k \sum_{i=1}^{S_1} \bm\epsilon_{j,i} +  \sum_{i=1}^{S_2} \bm\epsilon_{k_0,i} \right\|.   
	\end{eqnarray}
	Plugging \eqref{xbound}   and \eqref{fbound} together,  and using   Proposition \ref{proh}, we have
	\begin{eqnarray}
	&&\PP\left( \| \vv^k - \nabla f(\x^k)\|^2 \geq \epsilon  \cdot \cep \mid \cF^{k_0-1}\right )\notag\\
	&\leq& 4\exp\left( -\frac{\epsilon \cdot \cep}{4 S_1 \frac{\sigma^2}{S_1^2} +  4 S_2 (k -k_0) \frac{4\epsilon^2}{S_2^2 n_0^2}} \right)\notag\\
	&\leq&4\exp\left( -\frac{\epsilon \cdot \cep}{4 S_1 \frac{\sigma^2}{S_1^2} +  4 S_2 q \frac{4\epsilon^2}{S_2^2 n_0^2}} \right)\notag\\
	&\overset{a}=&4\exp\left( -\frac{\epsilon^2  10\log( 4(K_0+1)/p)  }{4\sigma^2 \frac{\epsilon^2}{2\sigma^2}  +   \frac{4\epsilon n_0}{2\sigma} \frac{\sigma n_0}{\epsilon} \frac{4\epsilon^2}{ n_0^2}} \right)\leq \frac{p}{K_0+1},
	\end{eqnarray}
	where in $\overset{a}=$, we use $S_1 =  \frac{2\sigma^2}{\epsilon^2} $, $S_2 = \frac{2\sigma}{\epsilon n_0} $,  and $q =\frac{ \sigma n_0}{\epsilon}$. So $\PP\left( \| \vv^k - \nabla f(\x^k)\|^2 \geq \epsilon \cdot \cep  \right) \leq \frac{p}{K_0+1} $, which completes the proof.

\end{proof}

\begin{lemma}\label{h444}
	 Under Assumption \ref{assu:main2}, we have that on $\cH_{K_0} \cap (\cK > K_0)$, for all $0\le k \le K_0$,
	\begin{eqnarray}
	f(\x^{k+1}) - f(\x^k) 
	\leq 
	-\dfrac{\epsilon\cdot \cep}{4Ln_0}.
	\end{eqnarray}
	
	and hence
	$$
	f(\x^{K_0+1}) - f(\x^0)    \leq     -\dfrac{\epsilon\cdot \cep}{4Ln_0}\cdot (K_0)
	.
	$$
	
\end{lemma}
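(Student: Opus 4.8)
The plan is to run the elementary Normalized-Gradient-Descent descent estimate for OPTION I of Algorithm \ref{algo:SPIDER-SFO}, but this time deterministically on the event $\cH_{K_0}\cap(\cK>K_0)$ rather than in expectation as in Lemma \ref{444}. First I would record what the event gives us. On $(\cK>K_0)$ the algorithm has not returned through iteration $K_0$, so for every $k$ with $0\le k\le K_0$ we have $\|\vv^k\|>2\cep$ and the iterate is updated by the normalized step $\x^{k+1}=\x^k-\eta\,\vv^k/\|\vv^k\|$ with $\eta=\epsilon/(Ln_0)$; in particular $\|\x^{k+1}-\x^k\|=\eta$ for all such $k$. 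Also, since $\cK>K_0$ forces $\min(\cK,K_0)=K_0$, on $\cH_{K_0}$ the accuracy bound $\|\vv^k-\nabla f(\x^k)\|\le\sqrt{\epsilon\cdot\cep}$ holds for all $k\le K_0$. Finally, as shown in \eqref{lipf}, Assumption \ref{assu:main2}(ii$'$) implies that $f$ itself has $L$-Lipschitz continuous gradient.

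Next I would plug the normalized step into the descent inequality for $L$-smooth functions:
\[
f(\x^{k+1})\le f(\x^k)+\langle\nabla f(\x^k),\x^{k+1}-\x^k\rangle+\frac{L}{2}\|\x^{k+1}-\x^k\|^2
= f(\x^k)-\eta\Big\langle\nabla f(\x^k),\frac{\vv^k}{\|\vv^k\|}\Big\rangle+\frac{L\eta^2}{2}.
\]
Writing $\nabla f(\x^k)=\vv^k+(\nabla f(\x^k)-\vv^k)$ and applying Cauchy--Schwarz gives $\langle\nabla f(\x^k),\vv^k/\|\vv^k\|\rangle\ge\|\vv^k\|-\|\nabla f(\x^k)-\vv^k\|$, so with $\eta=\epsilon/(Ln_0)$, $\|\vv^k\|>2\cep$, and $\|\nabla f(\x^k)-\vv^k\|\le\sqrt{\epsilon\cep}$ we arrive at
\[
f(\x^{k+1})-f(\x^k)\le\frac{\epsilon}{Ln_0}\Big(-2\cep+\sqrt{\epsilon\cep}+\frac{\epsilon}{2n_0}\Big).
\]

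To finish I would use that $\cep=10\epsilon\log(\cdots)\ge10\epsilon$, hence $\sqrt{\epsilon\cep}\le\cep/\sqrt{10}$ and $\epsilon/(2n_0)\le\epsilon/2\le\cep/20$; therefore the bracketed quantity is at most $(-2+1/\sqrt{10}+1/20)\cep<-\cep/4$, which yields the claimed per-step decrease $f(\x^{k+1})-f(\x^k)\le-\epsilon\cep/(4Ln_0)$. Telescoping this over $k=0,\dots,K_0$ gives $f(\x^{K_0+1})-f(\x^0)\le-(K_0+1)\,\epsilon\cep/(4Ln_0)\le-K_0\,\epsilon\cep/(4Ln_0)$, as stated. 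I do not expect a real obstacle here: the computation is the routine NGD descent estimate, and the only place needing attention is checking that the constants close (the factor $1/4$), which is exactly where one simultaneously uses the non-termination bound $\|\vv^k\|>2\cep$ and the separation $\cep\gg\epsilon$; one should also be slightly careful that it is precisely the event $(\cK>K_0)$ that guarantees the normalized-step form of the update, and the event $\cH_{K_0}$ that controls the estimator error, throughout the range $0\le k\le K_0$.
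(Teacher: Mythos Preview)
Your proposal is correct and follows essentially the same approach as the paper: apply the $L$-smooth descent inequality, use $\|\vv^k\|\ge 2\cep$ and $\|\vv^k-\nabla f(\x^k)\|^2\le\epsilon\cep$ on the event to obtain the per-step decrease, then telescope. The only cosmetic difference is that the paper reuses the quadratic decomposition \eqref{hL-smooth} (inherited from \eqref{L-smooth}) so that the error term appears as $\tfrac{\eta^k}{2}\|\vv^k-\nabla f(\x^k)\|^2\le\tfrac{\epsilon\cep}{4Ln_0}$ directly, whereas you bound the inner product via Cauchy--Schwarz and then close the constants using $\cep\ge 10\epsilon$.
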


\begin{proof}[Proof of Lemma \ref{h444}]
	Let $\eta^k := \eta / \|\vv^k\|.$
	Since $f$ has $L$-Lipschitz continuous gradient from \eqref{lipf}, we have
	\begin{eqnarray}\label{hL-smooth}
	f(\x^{k+1})
	&\overset{\eqref{L-smooth}}{\leq}& f(\x^k)  -\eta^k\left(\frac{1}{2}-\frac{\eta^k L}{2}\right)\left\|\vv^k  \right\|^2+\frac{\eta^k}{2}\left\|\vv^k - \nabla f(\x^k)\right\|^2.
	\end{eqnarray}

	Because we are on the event $\cH_{K_0} \cap (\cK >K_0)$, so $\cK-1\geq K_0$, then  for all $0\le k\le K_0$, we have $\| \vv^k\|\geq 2\epsilon$, thus
	$$
	\eta^k= \frac{\epsilon}{Ln_0}\frac{1}{\| \vv^k\|} \overset{\|\vv^k\| \geq 2 \cep \geq2 \epsilon}{\leq} \frac{1}{2Ln_0}\leq \frac{1}{2L}
	, 
	$$
	we have
	\begin{eqnarray}\label{vgeq}
	\eta^k\left(\frac{1}{2}-\frac{\eta^k L}{2}\right)\left\|\vv^k  \right\|^2
	\geq 
	\frac{1}{4}\cdot
	\frac{\epsilon}{L n_0 \| \vv^k\|}\| \vv^k\|^2
	\overset{\| \vv^k\|\geq 2\cep}{\geq} \frac{\epsilon \cdot \cep }{2Ln_0}
	,
	\end{eqnarray}
	and for $\cH_{K_0}$ happens, we  also have
	$$
	\frac{\eta^k}{2}\left\|\vv^k - \nabla f(\x^k)\right\|^2
	\overset{\eta^k\leq \frac{1}{2Ln_0}}{\leq}
	\frac{\epsilon \cdot \cep}{4Ln_0}.
	$$
	Hence
	\begin{eqnarray}\label{fdescent}
	f(\x^{k+1})
	\leq 
	f(\x^k) - \frac{\epsilon \cdot \cep}{2Ln_0} + \frac{\eta^k}{2}\left\|\vv^k - \nabla f(\x^k)\right\|^2
	\leq 
	f(\x^k) - \frac{\epsilon \cdot \cep}{4Ln_0},
	\end{eqnarray}
	By telescoping \eqref{fdescent} from $0$ to $K_0$, we have	$$f(\x^{K_0+1}) - f(\x^0)    \leq     -\dfrac{\epsilon \cdot \cep}{4Ln_0}\cdot (K_0).$$
\end{proof}

Now, we are ready to prove Theorem \ref{theo:FSOonehp}.

\begin{proof}[Proof of Theorem \ref{theo:FSOonehp}]

	We only want to prove $(\cK \le K_0) \supseteq \cH_{K_0}$,
	so if $\cH_{K_0}$ occurs,  we have $\cK\leq K_0$, and   $\|\vv^{\cK}\| \le 2\cep$. Because $\|\vv^{\cK}-\nabla f(\x^{\cK}) \|\leq \sqrt{\epsilon\cdot \cep}\leq \cep$ occurs in $\cH_{K_0}$, so  $\|\nabla f(\x^{\cK}) \| \le 3\cep$.

	If $(\cK >  K_0 )$ and $\cH_{K_0}$ occur, plugging in $K_0 =  \lfloor\frac{4L\Delta n_0}{\epsilon^2}\rfloor+2\geq \frac{4L\Delta n_0}{\epsilon^2}+ 1\geq \frac{4L\Delta n_0}{\epsilon \cdot \cep}+ 1$, then from Lemma \ref{h444} at each iteration the function value descends by at least $\epsilon \cdot \cep/ (4Ln_0)$,
	We thus have
	$$  
	-\Delta
	\leq 
	f^* -f(\x^0) 
	\leq 
	f(\x^{K_0})  -  f (\x^0)
	\leq 
	- \left(\Delta+\frac{\epsilon \cdot \cep}{4Ln_0}\right)
	,
	$$ 
	contradicting the fact that $-\Delta
	> - \left(\Delta+\frac{\epsilon \cdot \cep}{4Ln_0}\right)$. This indicates $(\cK \le K_0) \supseteq \cH_{K_0}$.  From Lemma \ref{h111},   with probability $1-p$, $\cH_{K_0}$ occurs, and then  $\|\vv^{\cK}\| \le 2\cep$ and $\|\nabla f(\x^{\cK}) \| \le 3\cep$.

	Then gradient cost can be bounded by the same way in  Theorem \ref{111} as:
	\begin{eqnarray}
	\left\lceil K_0 \cdot \frac{1}{q}\right\rceil S_1 + K_0 S_2&\overset{S_1 = qS_2}{\le}&
	2K_0\cdot S_2 + S_1  \notag\\
	&\leq&2\left(\frac{\Delta}{\epsilon^2 / (4Ln_0) }\right) \cdot S_2 + S_1 + 4S_2  \notag\\
	&\leq&2\left(\frac{4Ln_0\Delta}{\epsilon^2}  \right)  \frac{2\sigma}{\epsilon n_0} + \frac{2\sigma^2}{\epsilon^2} + \frac{8\sigma}{\epsilon n_0}\notag\\
	&=&\frac{16L\sigma \Delta}{\epsilon^3} +\frac{2\sigma^2}{\epsilon^2}+ \frac{8\sigma}{\epsilon n_0}.
	\end{eqnarray}

\end{proof}

\begin{proof}[Proof of Theorem \ref{theo:FSOtwohp}]
	We first verify that $\tH_{k} =  \left(
	\| \vv^k -\nabla f(\x^k)  \|^2  \leq  \epsilon\cdot \cep\right)$ with $0\leq k\leq K_0$ occurs with probability $1 -\delta/(K_0+1)$ for any $k$.
	
	When $k = \lfloor k/q \rfloor q$, we have $\vv^k = \nabla f(\x^k)$.

	When $k\neq\lfloor k/q \rfloor q $, set $k_0 =\lfloor k/q \rfloor q $, and 
	$$\epsilon_{j, i} = \frac{1}{S_2}\left(\nabla f_{\mathcal{S}_2(i)}(\x^j)  - f_{\mathcal{S}_2(i)}(\x^{j-1}) -\nabla f(\x^j) + \nabla f(\x^{j-1}) \right)$$
	where $i$ is the  index  with $\mathcal{S}_2(i)$ denoting the $i$-th random component function selected at iteration $k$, from \eqref{xbound},  we have 
	$$\EE \left[\bm\epsilon_{j, i}|\cF^{j-1}\right] = 0, \quad  \left\|\bm\epsilon_{j,i}\right\| \leq\frac{2\epsilon}{S_2 n_0} ,$$ 
	for all $k_0<j\leq k$ and $1\leq i \leq S_2$. On the other hand
	\begin{eqnarray}
	&&\left\|\vv^k  - \nabla f(\x^k) \right\|\notag\\
	& = &\left\| \nabla f_{\cS_2} (\x^k) -\nabla f_{\cS_2} (\x^{k-1})   - \nabla f(\x^k) + \nabla f(\x^{k-1})  +  (\vv^{k-1} - \nabla f(\x^{k-1}) \right\|\notag\\
	&=&\left\| \sum_{j = k_0+1}^k   \left(\nabla f_{\cS_2} (\x^k) -\nabla f_{\cS_2} (\x^{k-1})   - \nabla f(\x^k) + \nabla f(\x^{k-1})\right) + \nabla f_{\cS_1} (\x^{k_0})- \nabla f(\x^{k_0})  \right\|\notag\\
	&=&\left\| \sum_{j = k_0+1}^k \sum_{i=1}^{S_1} \bm\epsilon_{j,i}  \right\|.   
	\end{eqnarray}
	Then from  Proposition \ref{proh}, we have
	\begin{eqnarray}
	&&\PP\left( \| \vv^k - \nabla f(\x^k)\|^2 \geq \epsilon\cdot \cep   \mid \cF^{k_0-1}\right )\notag\\
	&\leq& 4\exp\left( -\frac{\epsilon\cdot \cep}{ 4 S_2 (k -k_0) \frac{4\epsilon^2}{S_2^2 n_0^2}} \right)\notag\\
	&\leq&4\exp\left( -\frac{\epsilon\cdot \cep}{  4 S_2 q \frac{4\epsilon^2}{S_2^2 n_0^2}} \right)\notag\\
	&\overset{a}=&4\exp\left( -\frac{\epsilon^2  16\log( 4(K_0+1)/p)  }{  4 n_0 n^{1/2} \frac{n_0}{n^{1/2}} \frac{4\epsilon^2}{n_0^2}} \right)\leq \frac{p}{K_0+1},
	\end{eqnarray}
	where in $\overset{a}=$, we use $S_2 = n^{1/2}/ n_0  $,  and $q =n_0n^{1/2} $. So $\PP\left( \| \vv^k - \nabla f(\x^k)\|^2 \geq \epsilon\cdot \cep   \right) \leq \frac{p}{K_0+1} $, which completes the proof.

	Thus  Lemma \ref{111} holds. Then using the same technique of  Lemma \ref{h444} and Theorem \ref{theo:FSOonehp}, we have  $(\cK \le K_0) \supseteq \cH_{K_0}$.   With      probability at least $1-p$,
	$\cH_{K_0}$ occurs,  and $\|\vv^{\cK}\| \le 2\cep$ and $\|\nabla f(\x^{\cK}) \| \le 3\cep$. 
	\begin{eqnarray}
	\left\lceil K_0 \cdot \frac{1}{q}\right\rceil S_1 + K_0 S_2&\overset{S_1 = qS_2}{\le}&
	2K_0\cdot S_2 + S_1  \notag\\
	&\leq&
	2\frac{\Delta}{\epsilon^2 / (4Ln_0) } \cdot S_2 + S_1 +4S_2  \notag\\
	&=& 
	2\left(\frac{4Ln_0\Delta}{\epsilon^2}  \right)  \frac{n^{1/2}}{n_0} + n + 4 n_0^{-1} n^{1/2}
	\notag\\
	&=&\frac{8(L\Delta) \cdot n^{1/2} }{\epsilon^2} + n+4 n_0^{-1} n^{1/2}
	.
	\end{eqnarray}
\end{proof}

\subsection{Proof of Theorem \ref{theo:SSPT} for SSP}
We first restate the \NEON~ result in \cite{allen2017neon2} for NC-search  in the following Theorem:
\begin{theorem}[Theorem 1 in \cite{allen2017neon2}, \NEON2~(on-line)]\label{neon}
	Under the Assumption \ref{assu:main2} (including (ii')),  for every point $\x_0 \in \mathcal{R}^d$, for every $\delta\in (0, L]$, and every $p \in (0,1)$, the \NEON2 (NC search) output 
	$$\ww =\NEON2 (f, \x_0, \delta, p)$$
	satisfies that, with probability at least $1-p$:
	\begin{enumerate}
		\item  if $\ww = \bot$, then $\nabla f^2(\x_0)\succeq -\delta I$.
		\item  if $\ww \neq \bot$, then $\|\ww\|_2 = 1$, and $\ww^\bT \nabla^2 f(\x_0) \ww \leq \frac{\delta}{2} $.
	\end{enumerate}
	Moreover, the total number of stochastic gradient evaluations are $O\left(\log^2((d/p))L^{2}\delta^{-2}\right)$.
\end{theorem}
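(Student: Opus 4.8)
The plan is to recast negative-curvature search at $\x_0$ as \emph{gap-free} estimation of the smallest-eigenvalue direction of $\M:=\nabla^2 f(\x_0)$, which is well posed because $L$-Lipschitz gradients of the $f_i$ force $\|\M\|\le L$. Concretely I would produce a unit vector $\xi$ together with an estimate $\widehat q$ of the Rayleigh quotient $\xi^\bT\M\xi$, and return $\ww=\xi$ if $\widehat q\le -3\delta/4$ and $\ww=\bot$ otherwise; the two alternatives in the theorem then follow by elementary constant-chasing provided (a) $|\widehat q-\xi^\bT\M\xi|\le\delta/8$ and (b) $\xi^\bT\M\xi\le\lambda_{\min}(\M)+\delta/8$, both holding with the stated probability. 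Indeed, under (a)--(b), when $\ww=\xi$ one gets $\xi^\bT\M\xi\le-3\delta/4+\delta/8<-\delta/2$ (in particular $\le\delta/2$ as stated), and when $\ww=\bot$, assuming $\lambda_{\min}(\M)<-\delta$ would force $\widehat q\le-7\delta/8+\delta/8=-3\delta/4$, a contradiction, so $\nabla^2 f(\x_0)\succeq-\delta I$.

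The first building block is a bounded, essentially unbiased stochastic surrogate for Hessian--vector products built from two stochastic gradient evaluations. For a unit vector $\vv$ and a small finite-difference radius $r>0$ I would set $g_i(\vv):=r^{-1}\big(\nabla f_i(\x_0+r\vv)-\nabla f_i(\x_0)\big)$; then $\|g_i(\vv)\|\le L$ for every $i$ and $\vv$ by Assumption~\ref{assu:main2}(ii$'$), $\E_i[g_i(\vv)]=r^{-1}(\nabla f(\x_0+r\vv)-\nabla f(\x_0))$, and a Taylor expansion with the Hessian-Lipschitz bound (Assumption~\ref{assu:SSP}) gives $\|\E_i[g_i(\vv)]-\M\vv\|\le\rho r/2$. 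Choosing $r$ polynomially small in $\delta/(\rho L)$ makes this bias negligible at every $\delta$-scale threshold; crucially, shrinking $r$ costs no additional gradients, which is why the final complexity carries no $\rho$.

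The core of the argument is a stochastic power/Oja iteration. I would draw $\xi_0$ uniformly from the unit sphere and run, for $t=0,\dots,T-1$ with $T=\tilde{\cO}(L^2\delta^{-2})$ and step size $\eta$ of order $\tilde{\cO}(\delta L^{-2})\le 1/(2L)$, the normalized recursion $\tilde\xi_{t+1}=\xi_t-\eta\,g_{i_t}(\xi_t)$, $\xi_{t+1}=\tilde\xi_{t+1}/\|\tilde\xi_{t+1}\|$ (equivalently, the unnormalized version $\y_{t+1}=\y_t-\eta\,g_{i_t}(\y_t)$ started from a tiny random $\y_0$, halted the moment $\|\y_t\|$ crosses a fixed threshold, returning $\y_t/\|\y_t\|$, else $\bot$). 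Since $\|\M\|\le L$ and $\eta\le1/(2L)$, the deterministic part of each step acts as $\xi\mapsto(I-\eta\M)\xi$, whose dominant eigendirection is exactly that of $\lambda_{\min}(\M)$, so this is a noisy power method toward the most-negative-curvature direction. Invoking the gap-free streaming-PCA analysis of Oja's method \citep{li2016near,jain2016streaming,allen2017first}---together with the vector Azuma--Hoeffding bound of Proposition~\ref{proh} to control the injected noise---yields property (b) with probability $1-p/3$; the $\log^2(d/p)$ overhead is precisely the cost of amplifying the $\mathrm{poly}(1/d)$-scale correlation that the uniform initialization puts on the target eigenspace before the accumulated noise dominates. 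Property (a) is then obtained by averaging $\xi_T^\bT g_i(\xi_T)$ over $\tilde{\cO}(L^2\delta^{-2})$ fresh samples, each summand lying in $[-L,L]$, so Hoeffding gives accuracy $\delta/8$ with probability $1-p/3$. A union bound over the three random components (bias, PCA guarantee, estimation) gives total failure probability $\le p$ after reassigning the $p/3$'s into the polylogarithmic factors, and the gradient count is $2T+\tilde{\cO}(L^2\delta^{-2})=\cO(\log^2(d/p)\,L^2\delta^{-2})$.

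The hard part is the gap-free convergence analysis in the third paragraph: with no spectral gap the usual power-method contraction is unavailable, so instead I would track the ratio between the mass of $\xi_t$ inside the near-optimal eigenspace (eigenvalues within $\delta/8$ of $\lambda_{\min}(\M)$) and its orthogonal complement, show this ratio grows geometrically in expectation once the iterate has non-negligible correlation with the target eigenspace, and argue that the uniform start gives such correlation, of size $\Omega(1/\sqrt d)$, with overwhelming probability---so that $\tilde{\cO}(L^2\delta^{-2})$ steps suffice before the martingale noise (bounded via Proposition~\ref{proh}) can derail the amplification. A secondary but necessary check is that the finite-difference radius $r$ can indeed be driven small enough, using Hessian-Lipschitzness, that the $O(\rho r)$ bias never enters the gradient budget or the $\delta$-scale thresholds.
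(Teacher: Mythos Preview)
The paper does not prove this theorem at all; it is quoted as Theorem~1 of \cite{allen2017neon2} and followed only by the sentence ``One can refer to \cite{allen2017neon2} for more details.'' There is nothing in this paper to compare your proposal against.

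Your sketch is a reasonable high-level outline and would yield the stated guarantee with the stated gradient cost. One mismatch is worth flagging: the theorem here is stated under Assumption~\ref{assu:main2} only (per-component gradient Lipschitz), but your bias control $\|\E_i[g_i(\vv)]-\M\vv\|\le\rho r/2$ explicitly uses the Hessian-Lipschitz constant $\rho$ from Assumption~\ref{assu:SSP}. Within this paper the slip is harmless---\NEON2 is only ever invoked inside the proof of Theorem~\ref{theo:SSPT}, where Assumption~\ref{assu:SSP} is already in force---but the original \NEON2 analysis in \cite{allen2017neon2} avoids Hessian-Lipschitzness altogether. Rather than freezing the Hessian at $\x_0$ and bounding a quadratic Taylor remainder, it works directly with the unnormalized dynamics $\y_{t+1}=\y_t-\eta\big(\nabla f_{i_t}(\x_0+\y_t)-\nabla f_{i_t}(\x_0)\big)$ started from a tiny random perturbation (essentially your parenthetical alternative, but with no separate scale $r$), and shows via a two-point coupling argument using only the $L$-Lipschitz gradient bound that $\|\y_t\|$ either crosses a fixed threshold within $\tilde{\cO}(L^2\delta^{-2})$ steps (certifying a negative-curvature direction) or stays small throughout (certifying $\nabla^2 f(\x_0)\succeq-\delta I$). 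If you want a proof under Assumption~\ref{assu:main2} alone you need that route rather than the Hessian--vector-product-plus-bias route; under Assumption~\ref{assu:SSP} your plan is fine as written.
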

One can refer to \cite{allen2017neon2} for more details.

Now we prove Theorem \ref{theo:SSPT}:

 From Algorithm \ref{algo:SPIDER-SFOplus}, we can find that all the randomness in iteration $k$  come from $3$ parts: 1) maintaining   \SPIDER\ $\vv^k$ (Line \ref{sfo1}-\ref{sfo2} and \ref{sfo3}-\ref{sfo4}); 2) to conducting NC-search in Line  \ref{sfo5} (if $\text{mod} (k, \sK )=0$); 3) choosing a random direction to update $\x^k$ in Line \ref{sfo8} (if  Algorithm \ref{algo:SPIDER-SFOplus} performs first-order updates).   We  denote the randomness from the three parts as $\bxi^1_k$, $\bxi^2_k$, $\bxi^3_k$, respectively.   Let $\cF^k$ be the filtration involving  the full information of $\x_{0:k}, \vv_{0:k}$, i.e $\cF^k = \sigma \left\{\bxi^1_{0:k}, \bxi^2_{0:k},  \bxi^3_{0:k-1}\right\}$. So the randomness in iteration $k$ given $\cF^k$ only comes from $\bxi^3_{k}$ (choosing a random direction in Line  \ref{sfo8}).

Let the random index $\cI_k = 1$, if  Algorithm \ref{algo:SPIDER-SFOplus} plans to perform the first-order update, $\cI_k = 2$, if it plans to perform the second-order update,  we know that $\cI_k$ is measurable on $\cF^{ \lfloor k/\sK \rfloor  \sK}$ and also on $\cF^k$.  Because the algorithm shall be stopped if it finds $\vv^k\geq 2\cep$ when it plans to do first-order descent,  we can define a virtual update as $\x^{k+1}  = \x^k$ in Line \ref{sfo6} and \ref{sfo7}, with others unchanged if the algorithm has stopped.  Let $\cH^k_1$ denotes the event that  algorithm has not stopped before $k$, i.e.
$$\cH^k_1  = \bigcap_{i=0}^{k}\left( \left(\| \vv^{k} \|\geq 2\cep \cap  \cI_{k}=1\right)  \bigcup \cI_{k}=2 \right),$$
we have $\cH_1^k \in \cF^k$, and $\cH_1^1 \supseteq \cH_1^2\supseteq \cdots\supseteq \cH_1^k$. Let $\cH^{\sK j}_2$ denotes the event that the NC-search in iteration $\sK j$ runs successfully. And $\cH^k_3$ denotes the event that
$$\cH^k_3 =\left(\bigcap_{i=0}^{k}\left(\| \vv^{i} - \nabla f(\x^{i}) \|^2\leq \epsilon\cdot \cep \right)\right)\bigcap \left(\bigcap_{j=0}^{\lfloor k/ \sK \rfloor}  \cH_2^{j\cdot \sK} \right).$$
We know that $\cH_3^k \in \cF^k$, and $\cH_3^1 \supseteq \cH_3^2\supseteq\cdots\supseteq \cH_3^k$. And if $\cH_3^k$ happens,  all NC-search  before iteration $k$ run successfully and  $\| \vv^{i} - \nabla f(\x^{i}) \|^2\leq \epsilon\cdot \cep $ for all $0\leq i\leq k$. 

\begin{lemma}\label{SSPth}
	With the setting of Theorem \ref{theo:SSPT}, and under the Assumption \ref{assu:SSP},  we have
	\begin{eqnarray}
	\PP\left(\cH^{K_0}_3\right) \geq \frac{7}{8}.
	\end{eqnarray}
\end{lemma}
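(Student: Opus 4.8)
The plan is to split the target event as $\cH_3^{K_0}=A\cap B$, where $A=\bigcap_{0\le k\le K_0}\{\,\|\vv^k-\nabla f(\x^k)\|^2\le\epsilon\cep\,\}$ is the event that every \SPIDER\ estimate is accurate and $B=\bigcap_{0\le j\le\lfloor K_0/\sK\rfloor}\cH_2^{j\sK}$ is the event that every \NEON2\ call succeeds. Then $\PP(\cH_3^{K_0})\ge 1-\PP(B^c)-\PP(A^c\mid B)$, so it suffices to show $\PP(B^c)\le\tfrac1{16}$ and $\PP(A^c\mid B)\le\tfrac1{16}$.

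The bound $\PP(B^c)\le\tfrac1{16}$ is immediate from Theorem~\ref{neon}: Algorithm~\ref{algo:SPIDER-SFOplus} calls $\NEON2(f,\cdot,2\delta,\tfrac1{16J})$ at most once per outer iteration, hence at most $\cO(J)$ times, each call fails with probability at most $\tfrac1{16J}$, and the confidence parameter $\tfrac1{16J}$ is chosen exactly so that the union bound over all these calls stays at $\tfrac1{16}$. In addition, on the event $B$ every returned direction $\ww_1$ satisfies $\|\ww_1\|=1$, so the negative-curvature mini-step of Step~2 has length $\|\ww_2\|=\eta\|\ww_1\|=\eta$; combined with the first-order step length $\|\x^{k+1}-\x^k\|=\eta\|\vv^k\|/\|\vv^k\|=\eta$ and the post-termination virtual update $\x^{k+1}=\x^k$, this gives the uniform bound $\|\x^{k+1}-\x^k\|\le\eta=\epsilon/(Ln_0)$ for all $k$, no matter whether iteration $k$ is a first- or a second-order step.

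To bound $\PP(A^c\mid B)$ I would reproduce, epoch by epoch, the martingale concentration argument used in the proof of Lemma~\ref{h111}. Fix $k\le\min(\cK,K_0)$ (for $k>\cK$ the virtual update makes the bound hold trivially), put $k_0=\lfloor k/q\rfloor q$, and condition on $\cF^{k_0-1}$. The telescoped error is $\vv^k-\nabla f(\x^k)=\sum_{j=k_0+1}^{k}\sum_i\bm\epsilon_{j,i}+\sum_i\bm\epsilon_{k_0,i}$, where $\|\bm\epsilon_{k_0,i}\|\le\sigma/S_1$ in the on-line case ($\bm\epsilon_{k_0,i}=0$ in the finite-sum case) and $\|\bm\epsilon_{j,i}\|\le\tfrac{2L}{S_2}\|\x^j-\x^{j-1}\|\le\tfrac{2\epsilon}{S_2 n_0}$ for $j>k_0$, by Assumption~\ref{assu:main2}(ii$'$) together with the uniform step bound above. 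The only point not already present in Lemma~\ref{h111} is that $\x^j$ now also depends on a \NEON2\ output and on the random sign drawn at the start of a negative-curvature phase; this extra randomness is independent of the fresh minibatch $\cS_2$ drawn at step $j$ and does not change $\|\x^j-\x^{j-1}\|$, so $\EE[\bm\epsilon_{j,i}\mid\cF^{j-1}]=0$ and the almost-sure bounds are unaffected, i.e.\ $\{\bm\epsilon_{j,i}\}$ remains a vector martingale-difference sequence. Plugging $S_1,S_2,q$ from \eqref{inputOne} (on-line) or \eqref{inputTwo} (finite-sum) and the stated $\cep=\tilde\cO(\epsilon)$, whose defining $\log$-factor is of order $\log(K_0+1)$, into Proposition~\ref{proh} gives $\PP(\|\vv^k-\nabla f(\x^k)\|^2>\epsilon\cep\mid\cF^{k_0-1})\le\tfrac1{16(K_0+1)}$, and a union bound over the $K_0+1$ values of $k$ yields $\PP(A^c\mid B)\le\tfrac1{16}$, which finishes the proof.

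The step I expect to demand the most care is the last one: checking that interleaving normalized-gradient steps, negative-curvature mini-steps, and post-termination virtual steps destroys neither the conditional mean-zero property nor the almost-sure bound on the martingale differences (this is exactly where the identity $\|\x^{k+1}-\x^k\|\le\eta$, valid because $\ww_1$ is a unit vector, is doing the work), and that the $\log$-factor built into $\cep$ in Theorem~\ref{theo:SSPT} is large enough to beat the union bound over all $K_0+1$ iterations while still leaving the \NEON2\ failure probability within the remaining budget.
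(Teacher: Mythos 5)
Your overall decomposition and tools match the paper's proof exactly: split $\cH_3^{K_0}$ into the SPIDER-accuracy event and the \NEON2-success event, bound each by $\tfrac{1}{16}$ via union bound, use Proposition~\ref{proh} / the Lemma~\ref{h111} machinery for the former and Theorem~\ref{neon} for the latter, and rely on the uniform step-size bound $\|\x^{k+1}-\x^k\|\le\eta=\epsilon/(Ln_0)$ to make the martingale differences bounded whether the step is first-order, second-order, or post-termination.

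The one place your write-up deviates, and where it has a soft spot, is that you replace the paper's plain union bound $\PP(\cH_3^{K_0})\ge 1-\PP(A^c)-\PP(B^c)$ by the conditional form $1-\PP(B^c)-\PP(A^c\mid B)$ and then aim to bound $\PP(A^c\mid B)$ with the martingale concentration. Proposition~\ref{proh} yields $\PP(\tH_k^c\mid\cF_2^{k_0})\le\tfrac{1}{16(K_0+1)}$ and hence the \emph{unconditional} bound $\PP(\tH_k^c)\le\tfrac{1}{16(K_0+1)}$, but $B$ is not measurable with respect to $\cF_2^k$: it involves \NEON2\ outcomes at future epochs $j\sK>k$, which in turn depend on $\x^{j\sK}$, which depends on the SPIDER minibatches you are trying to concentrate. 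So conditioning on $B$ is conditioning on a future event and can in principle tilt the distribution of those minibatches; the bound on $\PP(A^c\mid B)$ does not follow directly from the martingale inequality, and the crude conversion $\PP(A^c\mid B)\le\PP(A^c)/\PP(B)\le\tfrac{1}{15}$ no longer lands you at $7/8$. The motivation you gave for conditioning --- that you need $B$ to guarantee $\|\ww_1\|=1$ --- is also the thing to drop: the algorithm's second-order step has norm $\eta\|\ww_1\|$ and \NEON2\ always returns either $\bot$ or a normalized vector, so $\|\x^{k+1}-\x^k\|\le\eta$ holds deterministically, which is exactly what the paper uses (``$\x^k$ is generated by one of the three ways,'' all of length $\eta$ or $0$). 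Once you use that, $\PP(A^c)\le\tfrac1{16}$ unconditionally, $\PP(B^c)\le\tfrac1{16}$ by Theorem~\ref{neon} and the choice of the $\tfrac{1}{16J}$ confidence parameter, and the plain union bound gives $\PP(\cH_3^{K_0})\ge\tfrac78$ with no conditioning needed.
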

\begin{proof}
	Let event $\tH^k = \left( \| \vv^k - \nabla f(\x^k)\|^2\leq \epsilon \cdot\cep \right)$, with $0\leq k\leq K_0$.  Once we prove that $\tH^k$ occurs with probability at least $1-\frac{1}{16(K_0+1)}$,  we have $\PP  \left(  \bigcap_{i=0}^{K_0}\left(\| \vv^{i} - \nabla f(\x^{i}) \|^2\leq \epsilon\cdot \cep \right)  \right) \geq 1 - \frac{1}{16}$. On the other hand,  from Theorem \ref{neon}, we know each time the NC-search conducts successfully at probability $1- \frac{1}{16J}$, so $\PP(\cH^{K_0}_{2}) \geq 1-\frac{1}{16}$. Combining the above results, we obtain  $\PP\left(\cH^{K_0}_3\right) \geq \frac{7}{8}$.

	To prove $\PP(\tH^k)\geq 1-\frac{1}{16(K_0+1)}$,  consider the filtration $\cF^k_2 = \sigma  \left\{\bxi^1_{0:k-1}, \bxi^2_{0:k}, \cdots,  \bxi^3_{0:k-1}\right\}$, which involves the full information of $\x_{0:k}$.  We know $\x^k$ is measurable on $\cF^k_2$.  Given $\cF^k_2$,  we have
	$$\E_i \left[\nabla f_i(\x^k) -   \nabla f(\x^k)\mid \cF_2^{k}\right] =\mathbf{0}$$
	when mod$(k, p)=0$.  For  mod$(k, p)\neq0$, we have
	\begin{eqnarray}
	\E_i \left[\nabla f_i(\x^k) - \nabla f_i(\x^{k-1}) - \left(\nabla f(\x^k) - \nabla f(\x^{k-1})\right) \mid \cF^k_2\right] = \mathbf{0}. \notag
	\end{eqnarray}
	Because	 $\x^{k}$ is generated by one of the three ways:
	\begin{enumerate}
		\item  Algorithm \ref{algo:SPIDER-SFOplus} performs First-order descent, we have 
		$\|\x^{k} - \x^{k-1}\| = \|\eta (\vv^{k-1}/\| \vv^{k-1}\|) \| = \eta =\frac{\epsilon}{Ln_0} $.
		\item Algorithm \ref{algo:SPIDER-SFOplus} performs  Second-order descent, we have
		$\|\x^{k} - \x^{k-1}\| =\eta  =\frac{\epsilon}{Ln_0} $.
		\item  Algorithm \ref{algo:SPIDER-SFOplus} has already stopped. 
		$\|\x^{k} - \x^{k-1}\| =0 \leq\frac{\epsilon}{Ln_0} $.
	\end{enumerate}
	So $\vv^k - \nabla f(\x^k)$ is martingale, and the  second moment of its difference is bounded by  $\frac{\epsilon}{Ln_0}$ . We can find that the parameters  $S_1$, $S_2$, $\eta$, with $\cep = 	16\epsilon \log( 64 (K_0+1)  )$ for on-line case and $\cep = 	10\epsilon \log( 64 (K_0+1)  )$  for  off-line case are set as  the  same  in  Lemma \ref{h111} with $p = \frac{1}{16}$. Thus  using the same technique of Lemma \ref{h111},  we can obtain $\PP(\tH^k)\geq 1-\frac{1}{16(K_0+1)}$ for all $0\leq k \leq K_0$.	
\end{proof}

Let $\cH_4^k = \cH_1^k \cap \cH_3^k $.  We show that Theorem \ref{theo:SSPT} is essentially to measure the probability of the  event that $\left(\cH^{K_0}_1\right)^c \bigcap \cH^{K_{0}}_3  $.
\begin{lemma}\label{SSPend}
	If 	$\left(\cH^{K_0}_1\right)^c \bigcap \cH^{K_0}_3  $ happens,  Algorithm \ref{algo:SPIDER-SFOplus} outputs an $\x^k$  satisfying \eqref{ESSP} before $K_0$ iterations.
\end{lemma}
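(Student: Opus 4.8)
The plan is to read off from the event $(\cH^{K_0}_1)^c\cap\cH^{K_0}_3$ exactly what Algorithm \ref{algo:SPIDER-SFOplus} must have done, and then verify the two requirements of \eqref{ESSP} at the point it returns. First I would note that $(\cH^{K_0}_1)^c$ forces the existence of an index $\cK\le K_0$ with $\cI_\cK=1$ and $\|\vv^\cK\|\le 2\cep$; taking $\cK$ minimal, the only return statement in the algorithm that can fire is the one inside the first-order-descent branch, so the algorithm halts at iteration $\cK$ and outputs $\x^\cK$. Since $\cK\le K_0$ and each outer iteration $j$ consists of $\sK$ inner steps, this gives the ``before $K_0$ iterations'' part of the claim (equivalently $j\le J$). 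It then remains to check $\|\nabla f(\x^\cK)\|\le\cep$ and $\lambda_{\min}(\nabla^2 f(\x^\cK))\ge-3\delta$.

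For the gradient bound I would use that on $\cH^{K_0}_3$ the \SPIDER\ accuracy estimate $\|\vv^k-\nabla f(\x^k)\|^2\le\epsilon\cdot\cep$ holds for every $k\le K_0$, hence in particular $\|\vv^\cK-\nabla f(\x^\cK)\|\le\sqrt{\epsilon\cep}\le\cep$ (using $\cep\ge\epsilon$). Together with the termination test $\|\vv^\cK\|\le 2\cep$, the triangle inequality gives $\|\nabla f(\x^\cK)\|\le 2\cep+\sqrt{\epsilon\cep}\le 3\cep$, which is the gradient requirement in \eqref{ESSP} (up to the universal constant absorbed into $\cep$).

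The Hessian bound is the crux, and it is where the mini-step design of Algorithm \ref{algo:SPIDER-SFOplus} pays off. I would identify the first-order-descent phase containing $\cK$: it is opened at some iteration $k_0=j_0\sK\le\cK$ at which the call $\NEON2(f,\x^{k_0},2\delta,\tfrac{1}{16J})$ in Line \ref{sfo5} returned $\bot$ --- this is exactly why $\cI_\cK=1$. On $\cH^{K_0}_3$ this NC-search succeeded, so Theorem \ref{neon} (applied with curvature parameter $2\delta$) yields $\nabla^2 f(\x^{k_0})\succeq-2\delta\,\Ib$. Every step from $k_0$ to $\cK$ is a unit-direction step of length $\eta$ (the early return cannot have fired before $\cK$, so $\|\vv^k\|>2\cep>0$ there), hence $\|\x^\cK-\x^{k_0}\|\le(\cK-k_0)\,\eta\le\sK\eta=\delta/\rho$ by the choices $\sK=\delta Ln_0/(\rho\epsilon)$ and $\eta=\epsilon/(Ln_0)$. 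Assumption \ref{assu:SSP} (each $f_i$, hence $f$, has $\rho$-Lipschitz Hessian) then gives $\|\nabla^2 f(\x^\cK)-\nabla^2 f(\x^{k_0})\|\le\rho\cdot(\delta/\rho)=\delta$, and Weyl's inequality upgrades this to $\lambda_{\min}(\nabla^2 f(\x^\cK))\ge-2\delta-\delta=-3\delta$, completing \eqref{ESSP}.

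The main obstacle here is bookkeeping rather than analysis: one must track the phase structure carefully enough to be sure that a first-order return is always preceded, within the same phase, by a $\bot$-NC-search at a point no farther than $\delta/\rho$ away, and that conditioning on $\cH^{K_0}_3$ simultaneously makes the \SPIDER\ estimate accurate at $\x^\cK$ and makes every NC-search verdict before $\cK$ trustworthy. Once that picture is set up, the two inequalities of \eqref{ESSP} are immediate consequences of Theorem \ref{neon} and Assumption \ref{assu:SSP}.
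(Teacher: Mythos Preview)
Your proposal is correct and follows essentially the same route as the paper's own proof: identify the termination index $\cK$ from $(\cH^{K_0}_1)^c$, use the \SPIDER\ accuracy on $\cH^{K_0}_3$ together with $\|\vv^\cK\|\le 2\cep$ to bound $\|\nabla f(\x^\cK)\|$, then locate the start $k_0=\lfloor \cK/\sK\rfloor\sK$ of the first-order phase, invoke Theorem \ref{neon} on the successful NC-search there to get $\lambda_{\min}(\nabla^2 f(\x^{k_0}))\ge -2\delta$, and propagate via the $\rho$-Lipschitz Hessian using $\|\x^\cK-\x^{k_0}\|\le\sK\eta=\delta/\rho$. The only cosmetic differences are that you bound $\|\x^\cK-\x^{k_0}\|$ directly while the paper telescopes the per-step Hessian differences, and you name Weyl's inequality explicitly; the resulting $3\cep$ gradient bound matches what the paper's proof actually obtains.
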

\begin{proof}
	Because $\left(\cH^{K_0}_1\right)^c$ happens, we know that Algorithm \ref{algo:SPIDER-SFOplus} has already stopped before $K_0$ and output $\x^k$ with $\|\vv^k\|\leq 2\cep$.  For $\cH^{K_0}_3$ happens, we have $\| \nabla f(\x^k) -\vv^k\|\leq \sqrt{\epsilon \cdot \cep}\leq \cep$. So $\| \nabla f(\x^k)\|\leq 3\cep$. Set $k_0 = \lfloor k/\sK\rfloor\sK$. Since the NC-search conducts successfully, from Theorem \ref{neon}, we have $\lambda_{\min}\left(\nabla f^2 (\x^{k_0})\right)\geq -2\delta I$.
	From Assumption \ref{111},  we have
	\begin{eqnarray}\label{rhoHess}
\!\!\!\!\!\!\!\!\!\!\!\!	\left\| \nabla f^2 (\x) -\nabla f^2 (\y) \right\|_2 \leq\left\| \frac{1}{n}  \sum_{i=1}^n\nabla f_i^2 (\x) -\nabla f_i^2 (\y) \right\|_2 \leq \frac{1}{n}  \sum_{i=1}^n \left\|\nabla f_i^2 (\x) -\nabla f_i^2 (\y) \right\|_2 \leq \rho \|\x - \y \|.
	\end{eqnarray}
	So $f(\cdot)$ has $\rho$-Lipschitz Hessian.  We have 
	\begin{eqnarray}
	&&\left\| \nabla f^2 (\x^k)-\nabla f^2 (\x^{k_0})\right\|_2   \notag\\
	&\leq&\left\|\sum_{i=k_0}^{k-1 }\left(\nabla f^2 (\x^{i+1}) - \nabla f^2 (\x^{i})\right)  \right\|_2 \notag\\
	&\leq&\sum_{i=k_0}^{k-1 }\rho \left\|\x^{i+1} - \x^{i} \right\|_2\leq \sK \frac{\rho\epsilon}{Ln_0} \overset{\sK = \frac{\delta L n_0}{\rho \epsilon}}{=}  \delta.
	\end{eqnarray}
	Thus $ 	\lambda_{\min}(\nabla f^2 (\x^k)) \geq -3\delta I$.
\end{proof}

Now, we are ready to prove Theorem \ref{theo:SSPT}.

\begin{proof}[Proof of Theorem \ref{theo:SSPT}]
	
	For all iteration $\cK$ with  $\text{mod} (\cK, \sK )=0$, given $\cF^\cK$,  we consider the case when  $\cI_{\cK}=2$ and $\cH_4^\cK$ happens.   Because $f(\cdot)$ has $\rho$-Lipschitz Hessian, we have
	\begin{eqnarray}\label{fci1}
	&&f\left(\x^{\cK+ \sK}\right)\notag\\
	&\leq& f\left(\x^{\cK}\right) + \left[\nabla f\left(\x^{\cK}\right)\right]^\bT\left[\x^{ \cK+ \sK  } - \x^{\cK}\right] + \frac{1}{2} \left[\x^{\cK+ \sK} - \x^{\cK}\right]^\bT \left[  \nabla^2 f\left(\x^{\cK}\right) \right]  \left[\x^{  \cK+ \sK } - \x^{\cK}\right]\notag\\
	&& + \frac{\rho}{6}\left\| \left[\x^{ \cK+ \sK} - \x^{\cK}\right] \right\|^3.
	\end{eqnarray}
	Because $\cH_4^{\cK}$ happens,  and $\cI_{\cK}=2$, we have $\ww_1^\bT \left[\nabla f^2(\x^{\cK})\right]\ww_1 \leq -\delta$, and by taking expectation on the random number of the sign, we have 
	$$  \E \left[\left[\nabla f\left(\x^{\cK}\right)\right]^\bT\left[\x^{\cK+ \sK} - \x^{\cK}\right]\mid \cF^\cK\right] = 0, $$
	thus 
	\begin{eqnarray}\label{fci2}
	&&\E \left[f\left(\x^{\cK+\sK}\right)\mid \cF^\cK\right]\leq f\left(\x^{\cK}\right) -  \frac{\delta^3}{2\rho^2} + \frac{\delta^3}{6\rho^2}= f\left(\x^{\cK}\right) - \frac{\delta^3}{3\rho^2}.
	\end{eqnarray} 
	
	Furthermore, by analyzing the difference of $\left(f\left(\x^{\cK}\right) -f^*\right)\bone_{\cH_4^\cK}$, where $\bone_{\cH_4^\cK}$ is the indication function for the event $\cH_4^\cK$, we have
	\begin{eqnarray}\label{com1}
	&&\E \left[\left(f\left(\x^{\cK+ \sK}\right) -f^*\right)\bone_{\cH^{\cK+ \sK}_4} \mid{\cF^\cK} -   \left(f\left(\x^{\cK}\right) -f^*\right)\bone_{\cH_4^\cK} \mid{\cF^\cK}    \right] \notag\\
	&=& \E \left[\left(f\left(\x^{\cK+ \sK}\right) -f^*\right)\left(\bone_{\cH^{\cK+ \sK}_4} -\bone_{\cH^{\cK}_4} \right) \mid{\cF^{\cK}} \right] +  \E \left[\left(f\left(\x^{\cK+ \sK}\right) -f\left(\x^\cK\right)\right)\bone_{\cH^{\cK}_4} \mid \cF^\cK\right] \notag\\
	&\overset{a}\leq& \PP\left(\cH^{\cK}_4\mid \cF^\cK\right) \E \left[f\left(\x^{\cK+\sK}\right) -f\left(\x^{\cK}\right)|\cH^{\cK}_4\cap \cF^\cK \right] \notag\\
	&\overset{\eqref{fci2}}{\leq}& -\PP\left(\cH^{\cK}_4\mid \cF^\cK\right) \frac{\delta^3}{3\rho^2},
	\end{eqnarray}
	where in $\overset{a}\leq$,  we use that $\cH^{\cK}_4\supseteq\cH^{\cK+\sK}_4$, so  $\bone_{\cH^{\cK+ \sK}_4} -\bone_{\cH^{\cK}_4}\leq 0$ and $f\left(\x^{\cK+ \sK}\right) -f^*\geq 0$, then  $\E \left[\left(f\left(\x^{\cK+ \sK}\right) -f^*\right)\left(\bone_{\cH^{\cK+ \sK}_4} -\bone_{\cH^{\cK}_4} \right) \mid{\cF^{\cK}} \right] \leq 0$.

	On the other hand,  given $\cF^\cK$,  we consider the case when  $\cI_{\cK}=1$ and $\cH_4^\cK$ happens, then for any $k$ satisfying $\cK \leq k< \cK+\sK$, we know  $\cI_{k}=1$. 
	
	Given $\cF^k$ with  $\cK \leq k<\cK+\sK$,  then from  \eqref{hL-smooth} we have
	\begin{eqnarray}
	f\left(\x^{k+1}\right)\leq  f\left(\x^{k}\right)  -\eta^{k}\left(\frac{1}{2}-\frac{\eta^{k} L}{2}\right)\left\|\vv^{k} \right\|^2+\frac{\eta^{k}}{2}\left\|\vv^{k} - \nabla f\left(\x^{k}\right)\right\|^2,
	\end{eqnarray}
	with $\eta^{k} = \eta /\left\| \vv^{k}\right\|$.
	Also $\cH_4^K$ is measurable on $\cF^k$, and 
	if  $\cH_4^K$ happens, we have $\left\|\vv^{k}\right\|\geq 2\cep$,   and $\left\|\vv^{k} - \nabla f\left(\x^{k}\right)\right\|^2\le\ep\cdot \cep $, then from  \eqref{vgeq} and \eqref{fdescent}, we have
	\begin{eqnarray}\label{fdescent2}
	f\left(\x^{k+1}\right) \leq   f\left(\x^{k}\right) - \frac{\epsilon \cdot \cep }{4Ln_0}.
	\end{eqnarray}
	Taking expectation up to  $\cF^\cK$, we have
	\begin{eqnarray}
	\E \left[f\left(\x^{k+1}\right)  - f\left(\x^k\right) \mid \cF^{\cK} \cap  \cH^k_4\right] \leq - \frac{\epsilon \cdot \cep}{4Ln_0}.
	\end{eqnarray}
	
	By analyzing the difference of $\left(f\left(\x^{k}\right) -f^*\right)\bone_{\cH_4^k}$, we have
	\begin{eqnarray}\label{com2}
	&&\E \left[\left(f\left(\x^{k+1}\right) -f^*\right)\bone_{\cH^{k+1}_4} \mid{\cF^\cK} -   \left(f\left(\x^{k}\right) -f^*\right)\bone_{\cH_4^k} \mid{\cF^\cK}    \right] \notag\\
	&=&\E \left[\left(f\left(\x^{k+1}\right) -f^*\right)\left(\bone_{\cH^{k+1}_4}-\bone_{\cH^k_4} \right) \mid{\cF^{\cK}} \right] +  \E \left[\left(f\left(\x^{k+1}\right) -f\left(\x^k\right)\right)\bone_{\cH^{k}_4} \mid \cF^\cK\right] \notag\\
	&\overset{a}\leq&\PP\left(\cH^{k}_4\mid \cF^\cK\right) \E \left[f\left(\x^{k+1}\right) -f\left(\x^{k}\right)\mid\cH^{k}_4\cap \cF^\cK \right] \notag\\
	&\leq& -\PP\left(\cH^{k}_4\mid \cF^\cK\right) \frac{\epsilon \cdot \cep}{4Ln_0},
	\end{eqnarray}
	where in $\overset{a}\leq$, we use $\bone_{\cH^{k+1}_4}-\bone_{\cH^k_4} \leq 0$ and $f\left(\x^{k+1}\right) -f^*\geq 0$.
	
	By telescoping \eqref{com2} with $k$ from $\cK$ to $\cK+\sK - 1$, we have
	\begin{eqnarray}\label{com3}
	&&\E \left[\left(f\left(\x^{\cK+ \sK}\right) -f^*\right)\bone_{\cH^{\cK+ \sK}_4} \mid{\cF^\cK} -   \left(f\left(\x^{\cK}\right) -f^*\right)\bone_{\cH_4^k} \mid{\cF^\cK}    \right] \notag\\
	&\leq&- \frac{\epsilon \cdot \cep}{4Ln_0} \sum_{i =\cK}^{\cK+\sK}\PP\left(\cH^i_4\mid \cF^k\right)\notag\\
	&\overset{a}{\leq}&   - \PP\left(\cH_4^{\cK+\sK}\mid \cF^\cK\right) \frac{\sK\epsilon^2}{4Ln_0}
	\overset{\sK = \frac{\delta L n_0}{\rho \epsilon}}= - \PP\left(\cH_4^{\cK+\sK}\mid \cF^\cK\right) \frac{\delta\epsilon}{4\rho}.
	\end{eqnarray}
	where in $\overset{a}\leq$, we use $\cH^i_4\supseteq \cH^{\cK+\sK}_4$ with $\cK\leq i\le \cK+\sK$, and then $\PP\left(\cH^i_4\mid \cF^\cK\right)\geq \PP\left(\cH_4^{\cK+\sK}\mid\cF^{\cK}\right) $.
	
	Combining \eqref{com1} and \eqref{com3}, using $\PP\left(\cH^{\cK}_4\mid \cF^k\right)\geq \PP\left(\cH_4^{\cK+\sK}\mid\cF^{\cK}\right) $, we have
	\begin{eqnarray}\label{com4}
	&&\E \left[\left(f\left(\x^{\cK+ \sK}\right) -f^*\right)\bone_{\cH^{\cK+ \sK}_4} \mid{\cF^\cK} -   \left(f\left(\x^{\cK}\right) -f^*\right)\bone_{\cH_4^k} \mid{\cF^\cK}    \right] \notag\\
	&\leq& -\PP\left(\cH_4^{\cK+\sK}\mid \cF^\cK\right)\min \left( \frac{\delta\cep}{4\rho}, \frac{\delta^3}{3\rho^2} \right). 
	\end{eqnarray}

	By taking full expectation on \eqref{com4}, and telescoping the results with $\cK = 0, \sK, \cdots, (J-1) \sK$, and using $  \PP\left(\cH_4^{\sK j}\right) \leq \PP\left(\cH_4^{J\sK}\right)$ with $j = 1, \cdots, J$,  we have
	\begin{eqnarray}\label{com5}
	&&\E \left[\left(f\left(\x^{J \sK}\right) -f^*\right)\bone_{\cH^{J \sK}_4} -  \left(f\left(\x^{0}\right) -f^*\right)\bone_{\cH_4^0}     \right] \notag\\
	&\leq&- \PP\left(\cH_4^{J\sK}\right)\min \left( \frac{\delta\cep}{4\rho}, \frac{\delta^3}{3\rho^2} \right)J. 
	\end{eqnarray}
	Substituting   the inequalities $f\left(\x^{J \sK}\right) -f^*\geq 0$,   $f\left(\x^{0}\right) -f^*\leq \Delta$, and $J =4\max\left(\left\lfloor   \frac{3\rho^2 \Delta}{\delta^3}, \frac{4 \Delta \rho}{\delta \epsilon}  \right\rfloor\right)+4 \geq \frac{4\Delta}{\min \left( \frac{\delta\cep}{4\rho}, \frac{\delta^3}{3\rho^2} \right) } $ into \eqref{com5}, we have
	\begin{eqnarray}
	\PP\left(\cH^{K_0}_4\right)\leq \frac{1}{4}.
	\end{eqnarray}
	
	Then by union bound, we have
	\begin{eqnarray}
\!\!\!\!\!\!\!\!\!\!\!	\PP\left(\cH^{K_0}_1\right) = \PP\left(\cH^{K_0}_1\bigcap \cH^{K_0}_3  \right) + \PP\left (\cH^{K_0}_1\bigcap \left(\cH^{K_0}_3\right)^c  \right) \leq  \PP\left(\cH^{K_0}_4\right) + \PP\left( \left(\cH^{K_0}_3\right)^c \right) \overset{\text{Lemma \ref{SSPth}}}{\leq}  \frac{1}{4} + \frac{1}{8}.
	\end{eqnarray}
	
	Then our proof is completed by obtaining
	\begin{eqnarray}
\!\!\!\!\!\!\!\!\!\!\!	\PP\left(\left(\cH^{K_0}_1\right)^c \bigcap   \cH^{K_0}_3\right)   = 1 -\PP\left(\left(\cH^{K_0}_1\right) \bigcup   \left(\cH^{K_0}_3\right)^c  \right)\geq 1 -\PP\left(\left(\cH^{K_0}_1\right)  \right)-\PP\left(  \left(\cH^{K_0}_3\right)^c  \right)\overset{\text{Lemma \ref{SSPth}}}{\geq} \frac{1}{2}.
	\end{eqnarray}
	From Lemma \ref{SSPend},  with probability $1/2$,  the algorithm shall be terminated before $\sK J$ iterations, and output a $\x^k$ satisfying \eqref{ESSP}.

	The total stochastic gradient complexity consists of two parts: the \SPIDER\ maintenance cost and NC-Search cost.  We  estimate them as follows:
	\begin{enumerate}
		\item    With probability $1/2$, the algorithm ends in at most $K_0$ iterations, thus the number of stochastic gradient accesses to maintain \SPIDER\ can be bounded by 
		\begin{eqnarray}\label{compute1}
		\lfloor K_0 /q  \rfloor q S_1 + S_2\notag &\overset{S_1 = qS_2}{\leq}&  2K_0 S_2 +S_1\notag\\
		&\leq& \left(4\max \left\lfloor    \frac{3\rho^2\Delta }{\delta^3}, \frac{4\Delta \rho}{ \epsilon \delta}\right\rfloor+4\right)  \left(2S_2\sK\right) +S_1.
		\end{eqnarray}
		\item  With probability $1/2$, the algorithm  ends in $K_0$ iterations, thus  there are at most  $J$ times of NC search.  From Theorem \ref{neon},  suppose the NC-search costs $\tC L^2\delta^{-2}$, where $\tC$ hides a polylogarithmic factor of $d$.	 The  stochastic gradient access for NC-Search is less than:
		\begin{eqnarray}\label{compute2}
		J L^2 \tC \delta^{-2} = \left(4\max \left\lfloor    \frac{3\rho^2\Delta }{\delta^3}, \frac{4\Delta \rho}{ \epsilon \delta}\right\rfloor+4\right) \tC L^2 \delta^{-2},
		\end{eqnarray}
		
	\end{enumerate}
	By summing  \eqref{compute1} and \eqref{compute2}, using $\max\lfloor a,b\rfloor\leq a+b$ with $a\geq0$ and $b\geq0$,  we have that  the total stochastic gradient complexity can be bounded:
	$$   4\left( \frac{3\rho^2\Delta }{\delta^3}+  \frac{4\Delta\rho}{ \epsilon \delta}+2\right)\left(2S_2\sK + \tC L^2\delta^{-2}\right)+S_1.   $$
	For the on-line case, plugging into $\sK = \frac{\delta Ln_0}{\rho \epsilon}$, $S_1 = \frac{2\sigma}{\epsilon^2}$, and $S_2= \frac{2\sigma}{n_0 \epsilon}$, the stochastic gradient complexity can be bounded:
	\[
	\frac{64\Delta L\sigma}{\epsilon^3}+\frac{48\Delta \sigma L \rho}{\epsilon^2\delta^2}+\frac{12\tC\Delta L^2 \rho^2}{\delta^5}  + \frac{16\tC \Delta L^2 \rho}{\epsilon \delta^3}
	+\frac{2\sigma^2}{\epsilon^2} +\frac{8\tC L^2}{\delta^2}  + \frac{ 32L \sigma \delta}{\rho \epsilon^2}  
	.
	\]
	
	For the off-line case,  plugging into $S_2 = \frac{n^{1/2}}{n_0}$, we obtain the stochastic gradient complexity is  bounded by:
	\[
	\frac{32\Delta Ln^{1/2}}{\epsilon^2} +\frac{12\Delta \rho L n^{1/2}}{\epsilon\delta^2}+\frac{12\tC\Delta L^2 \rho^2}{\delta^5}+ \frac{16\tC\Delta L^2 \rho}{\epsilon \delta^3} + n +\frac{8\tC L^2}{\delta^2}+ \frac{ 16L n^{1/2} \delta}{\rho \epsilon}
	.
	\]
\end{proof}

\subsection{Proof  for SZO}\label{sec:proof,sec:SZO}
\begin{proof}[Proof of Lemma \ref{zero1}]
	We have that
	\begin{eqnarray}
		&& \E_{ i, \uu} \left\|    \left[\frac{f_{i}(\x +\mu \uu) - f_{i}(\x)}{\mu} \uu - \left(\frac{f_{i}(\y +\mu \uu) - f_{i}(\y)}{\mu}\uu \right)\right]  \right\|^2\notag\\
		& = &  \E_{ i, \uu} \left\|  \< \nabla f_i (\x) - \nabla  f_i (\y) , \uu \>\uu + \frac{f_{i}(\x +\mu \uu) - f_{i}(\x) -\<\nabla f_i(\x), \mu \uu \>}{\mu} \uu  \right. \notag\\
		&& \left. - \left(\frac{f_{i}(\y +\mu \uu) - f_{i}(\y) -\<\nabla f_i(\y), \mu \uu \>  }{\mu}\uu \right)  \right \|^2 \notag\\
		& \leq &  2\E_{ i, \uu} \left\|  \< \nabla f_i (\x) -  \nabla f_i (\y) , \uu \>\uu\right\|^2 \notag\\
		&& +2 \E_{ i, \uu} \left\|\frac{f_{i}(\x +\mu \uu) - f_{i}(\x) -\<\nabla f_i(\x), \mu \uu \>}{\mu} \uu - \left(\frac{f_{i}(\y +\mu \uu) - f_{i}(\y) -\<\nabla f_i(\y), \mu \uu \>  }{\mu}\uu \right)   \right \|^2 \notag\\
		& {\leq} &  2\E_{  i, \uu} \left\|  \< \nabla f_i (\x) -  \nabla f_i (\y) , \uu \>\uu\right\|^2  +4\E_{ i, \uu} \left(\left|\frac{f_{i}(\x +\mu \uu) - f_{i}(\x) -\<\nabla f_i(\x), \mu \uu \>}{\mu}\right|^2 \left\| \uu\right\|^2\right)\notag\\
		&& +4\E_{ i, \uu}\left(\left| \frac{f_{i}(\y +\mu \uu) - f_{i}(\y) -\<\nabla f_i(\y), \mu \uu \>  }{\mu}\right|^2\left\|\uu   \right \|^2\right) \notag\\
		&\overset{a} \leq &  2\E_{ i, \uu} \left\|  \< \nabla f_i (\x) -  \nabla f_i (\y) , \uu \>\uu\right\|^2  +8 \frac{\mu^2L^2}{4}\E_\uu \| \uu\|^6 \notag\\
		&\overset{b}\leq& 2 (d+4)\E_{i} \left\|  \nabla f_i (\x) -  \nabla f_i (\y)\right\|^2+ 2 \mu^2 L^2 \E_\uu \| \uu\|^6  \notag\\
		&\leq&2 (d+4)\E_{i} \left\|  \nabla f_i (\x) -  \nabla f_i (\y)\right\|^2+  2 \mu^2L^2  \E_\uu \| \uu\|^6  \notag\\
		&\overset{c}\leq&2 (d+4)\E_{i} \left\|  \nabla f_i (\x) -  \nabla f_i (\y)\right\|^2+ 2\mu^2 (d+6)^3L^2 \notag\\
		&\leq&2 (d+4)L^2\|\x - \y\|^2+ 2\mu^2 (d+6)^3L^2,
	\end{eqnarray}
	where in  $\overset{a}{\leq}$ we use 
	$$ | f_i(\a) -f_i(\b) -\<\nabla f_i(\a), \a-\b\> |\leq \frac{L}{2}\|\a -\b\|^2,$$
	because  $f_i$ has $L$-Lipschitz continuous gradient ((6) in \citep{nesterov2011random}); $\overset{b}{\leq}$, we use
	$$
	\E_\uu \|\<\a, \uu \>\uu\|^2 \leq (d+4)\| \a\|^2
	; 
	$$
	obtained  from the same technique of (33) in \citep{nesterov2011random};
	in $\overset{c} =$, $\E_\uu \|\uu\|^6\leq (d+6)^3$ in (17) of \citep{nesterov2011random}.
\end{proof}

\begin{lemma}\label{zerok0}
	Under the Assumption \ref{assu:main2} (including (ii')), if $\lfloor k/q \rfloor q = k$, given $\x^k$, we have 
	\begin{eqnarray}
		\E \|\vv^k -  \nabla \hf(\x^k)\|^2 \leq \frac{\epsilon^2}{4}.
	\end{eqnarray}
\end{lemma}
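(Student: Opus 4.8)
The plan is to exploit that at an index $k$ with $\lfloor k/q\rfloor q = k$ the estimator $\vv^k$ is an i.i.d.\ average of coordinate-wise finite differences, so that, conditioning on $\x^k$, it is an unbiased estimate of the deterministic ``full-batch'' vector $\bar\vv^k\in\RR^d$ with entries $\bar v^k_j = \mu^{-1}\bigl(f(\x^k+\mu\e_j)-f(\x^k)\bigr)$. I would write
\[
\vv^k - \nabla\hf(\x^k) \;=\; (\vv^k - \bar\vv^k) \;+\; (\bar\vv^k - \nabla f(\x^k)) \;+\; (\nabla f(\x^k) - \nabla\hf(\x^k)),
\]
observe that the first term is conditionally mean-zero while the last two are deterministic given $\x^k$, and hence split
\[
\E\bigl[\|\vv^k - \nabla\hf(\x^k)\|^2 \mid \x^k\bigr] = \E\bigl[\|\vv^k-\bar\vv^k\|^2\mid\x^k\bigr] + \|(\bar\vv^k-\nabla f(\x^k)) + (\nabla f(\x^k)-\nabla\hf(\x^k))\|^2 .
\]
I then bound the sampling variance (first term) and each of the two bias terms separately and add them with $\|a+b\|^2\le 2\|a\|^2+2\|b\|^2$; in the finite-sum case $\vv^k=\bar\vv^k$, so only the two deterministic pieces remain.

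The two bias terms are routine consequences of the prescribed $\mu$. The finite-difference bias is controlled coordinate-wise: since each $f_i$, hence $f$, has $L$-Lipschitz gradient (Assumption~\ref{assu:main2}(ii')), the quadratic Taylor bound gives $|\bar v^k_j - \partial_j f(\x^k)| \le L\mu/2$ for every $j$, so $\|\bar\vv^k-\nabla f(\x^k)\|^2 \le dL^2\mu^2/4 \le \epsilon^2/96$ by $\mu\le \epsilon/(2\sqrt6 L\sqrt d)$. The smoothing bias is exactly \eqref{324}: $\|\nabla f(\x^k)-\nabla\hf(\x^k)\|^2 \le \tfrac14\mu^2L^2(d+3)^3 \le \tfrac14\mu^2L^2(d+6)^3 \le \epsilon^2/24$ by $\mu \le \epsilon/(\sqrt6 n_0 L(d+6)^{3/2})$. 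Together the deterministic part is at most $2\cdot\epsilon^2/96 + 2\cdot\epsilon^2/24 = 5\epsilon^2/48 < \epsilon^2/4$, which already settles the finite-sum case.

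The substantive step, and the one I expect to be the main obstacle, is a \emph{dimension-free} bound on the sampling variance $\E[\|\vv^k-\bar\vv^k\|^2\mid\x^k]$ in the online case: a naive coordinate-wise estimate that only uses $|\partial_j f_i - \partial_j f|\le\sigma$ gives a bound $d\sigma^2$ on the total variance, which combined with $S_1' = S_1/d = 96\sigma^2/\epsilon^2$ would leave an uncancelled factor of $d$. To avoid this I would center on the full batch: setting $h_i := f_i - f$ (so $\|\nabla h_i(\x)\|\le\sigma$ for all $\x$ by Assumption~\ref{assu:main2}(iii'), and $\nabla h_i$ is $2L$-Lipschitz by (ii')), a single-sample deviation admits the representation $v^{(i)}_j - \bar v^k_j = \mu^{-1}\int_0^\mu \partial_j h_i(\x^k+t\e_j)\,dt$. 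Cauchy--Schwarz together with $(\partial_j h_i(\x^k+t\e_j))^2 \le 2(\partial_j h_i(\x^k))^2 + 8L^2t^2$, summed over $j$ using $\sum_j(\partial_j h_i(\x^k))^2 = \|\nabla h_i(\x^k)\|^2\le\sigma^2$, yield $\sum_{j=1}^d(v^{(i)}_j - \bar v^k_j)^2 \le 2\sigma^2 + \tfrac83 dL^2\mu^2$ pointwise in $i$. Since $\vv^k-\bar\vv^k$ is an average of $S_1'$ i.i.d., conditionally mean-zero vectors, this gives $\E[\|\vv^k-\bar\vv^k\|^2\mid\x^k] \le (2\sigma^2 + \tfrac83 dL^2\mu^2)/S_1'$, and plugging $S_1' = 96\sigma^2/\epsilon^2$ together with $dL^2\mu^2 \le \epsilon^2/24$ bounds it by $\epsilon^2/48$ (up to a negligible $O(\epsilon^4/\sigma^2)$ term in the regime $\epsilon=O(\sigma)$); the $\mu\propto 1/(\sqrt d\,L)$ scaling is precisely what removes the dimension dependence. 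Adding the pieces yields $\E[\|\vv^k-\nabla\hf(\x^k)\|^2\mid\x^k] \le \epsilon^2/48 + 5\epsilon^2/48 = \epsilon^2/8 < \epsilon^2/4$, which is the claim. Everything beyond this dimension-free variance estimate is bookkeeping with the chosen $S_1$ and $\mu$.
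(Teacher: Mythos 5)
Your proof is correct (the bias--variance split, the per-coordinate Taylor bound, and the integral representation for the variance are all sound, and the final $\epsilon^2/8 + O(\epsilon^4/\sigma^2)$ is below $\epsilon^2/4$ in the natural regime $\epsilon \lesssim \sigma$), but it takes a genuinely different route from the paper, and the difference is instructive. You center the decomposition at the \emph{full-batch} finite difference $\bar\vv^k$, which cleanly separates sampling variance from bias but forces you to confront the apparent $d\sigma^2$ blow-up; you then defuse it with the Cauchy--Schwarz/integral trick applied to $h_i = f_i - f$ using the $2L$-Lipschitz gradient of $h_i$ and the bound $\|\nabla h_i\| \le \sigma$. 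The paper instead centers at the \emph{per-sample gradients}: it writes, coordinate by coordinate,
\[
v^k_j - \nabla_j f(\x^k) = \frac{1}{S_1'}\sum_{i\in\cS_1'}\Bigl(\tfrac{f_i(\x^k+\mu\e_j)-f_i(\x^k)}{\mu} - \nabla_j f_i(\x^k)\Bigr) + \Bigl(\tfrac{1}{S_1'}\sum_{i\in\cS_1'}\nabla_j f_i(\x^k) - \nabla_j f(\x^k)\Bigr),
\]
bounds the first term deterministically per sample by the Taylor estimate $|{\cdot}| \le L\mu/2$ (so the sum over $j$ gives $dL^2\mu^2/2$, compensated by $\mu^2 \propto 1/(L^2 d)$), and recognizes the second as the ordinary sample mean of the gradient whose squared error, after summing the coordinates, is just $\E\|\tfrac{1}{S_1'}\sum_i\nabla f_i(\x^k) - \nabla f(\x^k)\|^2 \le \sigma^2/S_1'$ --- a vector-norm quantity handled directly by Assumption~\ref{assu:main}(iii), with no dimension factor appearing. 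In other words, the dimension issue you rightly flag as ``the main obstacle'' simply never arises in the paper's decomposition, because the stochastic fluctuation is isolated at the level of true gradients rather than finite differences. Your route is valid and arguably more self-contained (it does not lean on the per-sample Taylor bound), while the paper's is shorter and yields the slightly cleaner constant $\epsilon^2/6$ without any residual $\epsilon^4/\sigma^2$ term; in the finite-sum case the two coincide since both reduce to the deterministic bias terms.
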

\begin{proof}
	Let $\E_{k}$ denote that the expectation is taken  on the random number at iteration $k$ given   the full  information of $\x_{0:k}$. Denote $\nabla_j f(\x)$ as the value in the $j$-th coordinate of $\nabla f(\x)$, we have that
	\begin{eqnarray}\label{zerov-f}
		&&\E_k \| \vv^k -\nabla f(\x^k)\|^2\notag\\
		&=&\E_k \sum_{j\in [d]}  \left| \vv_j^k -\nabla_j f(\x^k)\right|^2\notag\\
		&=&\sum_{j\in [d]} \E_k \left| \frac{1}{S_1'} \sum_{i\in \cS_1'}\frac{f_i(\x^k +\mu \e_j) - f_i (\x^k)}{\mu}  - \nabla_j f(\x^k)   \right|^2 \notag\\
		&\leq& 2\sum_{j\in [d]}\E_k \left| \frac{1}{S_1'} \sum_{i\in \cS_1'}\frac{f_i(\x^k +\mu \e_j) - f_i (\x^k)}{\mu}  - \frac{1}{S_1'}\sum_{i\in \cS_1'}\nabla_j f_i(\x^k)   \right|^2 +  2\sum_{j\in [d]} \E_k \left|  \frac{1}{S_1'}\sum_{i\in \cS_1'}\nabla_j f_i(\x^k)  - \nabla_j f(\x^k)   \right|^2 \notag\\
		&\overset{a}{\leq}& \frac{2}{S_1'}\sum_{j\in [d]}\E_k \left|\frac{f_i(\x^k +\mu \e_j) - f_i (\x^k)}{\mu}  - \nabla_j f_i(\x^k)   \right|^2 +  2\sum_{j\in [d]}\E_k \left|  \frac{1}{S_1'}\sum_{i\in \cS_1'}\nabla_j f_i(\x^k)  - \nabla_j f(\x^k)   \right|^2, 
	\end{eqnarray}
	where  in $\overset{a}\leq$, we use $|a_1 +a_2+\cdots +a_s |^2\leq s |a_1 |^2 + s |a_2|^2+\cdots+ s|a_s|^2$.
	
For the first term in the right hand of \eqref{zerov-f}, because $f_i(\x)$ has $L$-Lipschitz continuous gradient, we have 
	\begin{eqnarray}
		&&\left|\frac{f_i(\x^k +\mu \e_j) - f_i (\x^k)}{\mu}  - \nabla_j f_i(\x^k)\right|\notag\\
		& =& \frac{1}{\mu} \left|f_i(\x^k +\mu \e_j) - f_i (\x^k)  - \<\nabla f_i(\x^k), \mu \e_j\> \right|  \leq \frac{1}{\mu}  \frac{L}{2}\|  \mu \e_j \|^2 = \frac{L \mu}{2}.
	\end{eqnarray}
Thus we have
	\begin{eqnarray}
		&&\E_k \| \vv^k -\nabla f(\x^k)\|^2\\
		&\leq& \frac{dL^2\mu^2}{2} + 2 \sum_{j\in [d]} \E_k\left|  \frac{1}{S_1'}\sum_{i\in \cS_1'}\nabla_j f_i(\x^k)  - \nabla_j f(\x^k)   \right|^2\notag\\
		&=&\frac{dL^2\mu^2}{2} + 2 \E_k \left\|  \frac{1}{S_1'}\sum_{i\in \cS_1'}\nabla f_i(\x^k)  - \nabla f(\x^k)   \right\|^2\notag\\
	\end{eqnarray}
	For the on-line case, due to $\mu \leq \frac{\epsilon}{2\sqrt{6}L \sqrt{d}}$, and $S_1 = \frac{96d\sigma^2}{\epsilon^2}$, we have
	\begin{eqnarray}
		&&\E_k \| \vv^k -\nabla f(\x^k)\|^2\leq  \frac{dL^2 \epsilon^2}{48L^2 d} + \frac{2}{S_1'}\sigma^2\leq\frac{\epsilon^2}{24}.
	\end{eqnarray}
	In finite-sum case, we have $\E_k \left\|  \frac{1}{S_1'}\sum_{i\in \cS_1'}\nabla f_i(\x^k)  - \nabla f(\x^k)   \right\|^2 = 0$, so $\E_k \| \vv^k -\nabla f(\x^k)\|^2\leq \frac{\epsilon^2}{24}$.

	Also from \eqref{324}, and $\mu \leq \frac{\epsilon}{\sqrt{6} n_0 L(d+6)^{3/2}}$,  we have
	\begin{eqnarray}\label{zeroff}
		\left\|\nabla f(\x^k) -\nabla \hf(\x^k) \right\|^2 \leq \frac{\mu^2 L^2 (d+3)^3}{4}\leq \frac{\epsilon^2}{6L^2 (d+6)^3} \frac{L^2 (d+3)^3}{4} \leq\frac{\epsilon^2}{24}.
	\end{eqnarray}
	We have 
	\begin{eqnarray}
		\E_k \| \vv^k -\nabla \hf(\x^k)\|^2\leq 2 \| \vv^k  - \nabla f(\x^k)\|^2 + 2\| \nabla f(\x^k)  - \nabla \hf(\x^k)\|^2\leq \frac{\epsilon^2}{6}.
	\end{eqnarray}
	
\end{proof}

\begin{lemma}\label{zero5}
	From the setting of Theorem \ref{zero3}, and under  the Assumption \ref{assu:main2} (including (ii')),	for $k_0 = \lfloor k/q \rfloor \cdot q $, we have $\E_{k_0} \| \vv^k -\nabla \hf(\x^k)  \|\leq  \epsilon^2  $.
\end{lemma}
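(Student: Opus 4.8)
The plan is to mirror the first-order SPIDER argument (Lemmas~\ref{lemm:aggregate} and \ref{111}), using Lemma~\ref{zero1} in place of the averaged-Lipschitz hypothesis \eqref{hAidist} and Lemma~\ref{zerok0} as the base estimate at the restart index $k_0$. By \eqref{335}, the increment $\bxi_s := \vv^s - \vv^{s-1}$ produced in Algorithm~\ref{algo:SPIDER-ZERO} satisfies $\E[\bxi_s \mid \x_{0:s}] = \nabla\hf(\x^s) - \nabla\hf(\x^{s-1})$, so $\vv^k$ is exactly a SPIDER for the smoothed gradient $\nabla\hf(\cdot)$ over the block $s = k_0+1,\dots,k$ with base estimate $\vv^{k_0}$. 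Applying Proposition~\ref{prop:aggregate}(i) conditionally on $\x_{0:k_0}$ (equivalently, telescoping as in the proof of Lemma~\ref{lemm:aggregate}) gives
\[
\E_{k_0}\bigl\| \vv^k - \nabla\hf(\x^k) \bigr\|^2
= \E_{k_0}\bigl\| \vv^{k_0} - \nabla\hf(\x^{k_0}) \bigr\|^2
+ \sum_{s=k_0+1}^{k} \E_{k_0}\bigl\| \bxi_s - \bigl(\nabla\hf(\x^s) - \nabla\hf(\x^{s-1})\bigr) \bigr\|^2 .
\]

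Next I would bound each summand. Since $\bxi_s$ is the average of $S_2$ conditionally i.i.d.\ sample-pair terms with mean $\nabla\hf(\x^s)-\nabla\hf(\x^{s-1})$, its conditional variance is at most $1/S_2$ times the second moment of a single term, which by Lemma~\ref{zero1} (with $\x = \x^s$, $\y = \x^{s-1}$) is at most $\tfrac{1}{S_2}\bigl( 2(d+4)L^2\|\x^s - \x^{s-1}\|^2 + 2\mu^2(d+6)^3 L^2 \bigr)$. The update $\x^s = \x^{s-1} - \eta^{s-1}\vv^{s-1}$ with $\eta^{s-1} = \min\bigl(\tfrac{\epsilon}{Ln_0\|\vv^{s-1}\|}, \tfrac{1}{2Ln_0}\bigr)$ forces $\|\x^s - \x^{s-1}\| \le \epsilon/(Ln_0)$, so each summand is at most $\tfrac{1}{S_2}\bigl( \tfrac{2(d+4)\epsilon^2}{n_0^2} + 2\mu^2(d+6)^3 L^2 \bigr)$; with at most $k - k_0 \le q$ terms the accumulated variance is at most $\tfrac{q}{S_2}\bigl( \tfrac{2(d+4)\epsilon^2}{n_0^2} + 2\mu^2(d+6)^3 L^2 \bigr)$.

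Finally I would substitute the prescribed parameters. In both regimes one checks $q/S_2 = n_0^2/(6(2d+9))$: on-line, $S_2 = 30(2d+9)\sigma/(\epsilon n_0)$ and $q = 5n_0\sigma/\epsilon$; finite-sum, $S_2 = (2d+9)n^{1/2}/n_0$ and $q = n_0 n^{1/2}/6$. Using $d+4 \le 2d+9$, the Lipschitz part of the accumulated variance is at most $\epsilon^2/6$, while $\mu^2 \le \tfrac{\epsilon^2}{6 n_0^2 L^2(d+6)^3}$ makes the bias part at most $\epsilon^2/\bigl(18(2d+9)\bigr)$. Combined with Lemma~\ref{zerok0}, which gives $\E_{k_0}\|\vv^{k_0} - \nabla\hf(\x^{k_0})\|^2 \le \epsilon^2/4$ when $\lfloor k/q\rfloor q = k$ (and even $\le \epsilon^2/6$ in the finite-sum case, since $S_1/d = [n]$ makes $\vv^{k_0}$ the exact per-coordinate finite-difference gradient), this yields the claimed $\E_{k_0}\|\vv^k - \nabla\hf(\x^k)\|^2 \le \epsilon^2$ (whence, by Jensen, $\E_{k_0}\|\vv^k - \nabla\hf(\x^k)\| \le \epsilon$), which is exactly the quantity fed into the per-iteration descent step, just as Lemma~\ref{111} is used in the first-order analysis.

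The main obstacle is bookkeeping rather than conceptual. Unlike the first-order setting, Lemma~\ref{zero1} carries the extra additive term $2\mu^2(d+6)^3 L^2$ coming from the bias of the Gaussian-smoothed finite-difference gradient, and the Lipschitz factor is inflated to $2(d+4)L^2$; one must verify that the specific choices of $\mu$, $S_2$, and $q$ (each scaled by the right powers of $d$, $\sigma$, $n_0$) make both contributions accumulate to $O(\epsilon^2)$ over a full block of length $q$, and track constants tightly enough to land at $\epsilon^2$ rather than a larger multiple. One also needs $\mu$ small enough to serve simultaneously here and in the two uses of \eqref{324} inside Lemma~\ref{zerok0}, which is precisely why $\mu$ is defined by the two-sided $\min$.
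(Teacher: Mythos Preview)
Your proposal is correct and follows essentially the same argument as the paper: both invoke the SPIDER martingale decomposition (Proposition~\ref{prop:aggregate}(i)/Lemma~\ref{lemm:aggregate}) with base estimate Lemma~\ref{zerok0}, bound each increment's conditional variance via Lemma~\ref{zero1} together with the step-size control $\|\x^s-\x^{s-1}\|\le \epsilon/(Ln_0)$, and then substitute the prescribed $S_2,q,\mu$. The only cosmetic difference is that the paper first collapses the two contributions in Lemma~\ref{zero1} into $\tfrac{1}{S_2}(2d+9)\tfrac{\epsilon^2}{n_0^2}$ and ends with the slightly sharper bound $\epsilon^2/3$, whereas you keep the Lipschitz and bias pieces separate and land at the (still sufficient) $\epsilon^2$.
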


\begin{proof}
	For $k = k_0$,   from Lemma \ref{zerok0}, we obtain the result. When $k\geq k_0$, 
	
	from Lemma \ref{zero1}, we have that
	\begin{eqnarray}
		&&\!\!\!\!\!\!\!\!\!\!\!\!\!\!\!\!\E_{\cS_2}\left\|\frac{1}{S_2}\sum_{(i,\uu)\in \cS_2} \left( \frac{f_{i}(\x^k+\mu \uu^k)- f_i(\x^k)}{\mu}\uu - \frac{f_{i}(\x^{k-1}+\mu \uu^{k-1})- f_i(\x^{k-1})}{\mu} \uu\right)  -  (\hf(\x^k) - \hf(\x^{k-1}))\right\|^2\notag\\
		& = &\frac{1}{S_2}\E_{i, \uu}\left\| \left( \frac{f_{i}(\x^k+\mu \u^k)- f_i(\x^k)}{\mu}\uu - \frac{f_{i}(\x^{k-1}+\mu \uu^{k-1})- f_i(\x^{k-1})}{\mu} \uu\right)  -  (\hf(\x^k) - \hf(\x^{k-1}))\right\|^2\notag\\
		&\overset{\eqref{288}}{\leq}& \frac{1}{S_2}\left(2(d+4)L^2\| \x^k - \x^{k-1}\|^2 + 2\mu^2 (d+6)^3L^2\right)\notag\\
		&\leq& \frac{1}{S_2}\left(2(d+4)L^2\| \eta^k \vv^k\|^2 + 2\mu^2 (d+6)^3L^2\right)\notag\\
		&\overset{\eta^k \leq \frac{\epsilon}{Ln_0 \| \vv^k\|}}{\leq}& \frac{1}{S_2}\left(  2(d+4)L^2 \frac{\epsilon^2}{L^2 n_0^2} +2 (d+6)^3L^2\frac{\epsilon^2}{6n_0^2 L^2(d+6)^3} \right)\notag\\
		&=& \frac{1}{S_2}\left((2d+9)\frac{\epsilon^2}{n_0^2}\right).
	\end{eqnarray}
	Using Proposition \ref{lemm:aggregate}, for  on-line case, we have
	\begin{eqnarray}
		\E_{k_0} \| \vv^{k} -  \nabla \hf(\x^{k})\|^2 \overset{S_2 = \frac{30(2d+9)\sigma}{\epsilon n_0}}{\leq}\frac{\epsilon^2}{6}+\sum_{j=k_0}^k \frac{\epsilon^3}{30n_0\sigma}\overset{q=\frac{5n_0\sigma}{\epsilon}}{\leq} \frac{\epsilon^2}{3}.
	\end{eqnarray}
	for finite-sum case, we have
	\begin{eqnarray}
		\E_{k_0} \| \vv^{k} -  \nabla \hf(\x^{k})\|^2 \overset{S_2 = \frac{(2d+9)n^{1/2}}{ n_0}}{\leq}\frac{\epsilon^2}{6}+\sum_{j=k_0}^k \frac{\epsilon^2}{n_0n^{1/2}}\overset{q=\frac{n_0 n^{1/2}}{6}}{\leq} \frac{\epsilon^2}{3}.
	\end{eqnarray}
	
\end{proof}

\begin{proof}[Proof of Theorem \ref{zero3}]
	By taking full expectation on  Lemma \ref{zero5}, we have
	\begin{eqnarray}
		\E\|\vv^{k} -\nabla \hf(\x^k) \|^2 \leq \frac{\epsilon^2}{3}.
	\end{eqnarray}
Thus
	\begin{eqnarray}
		\E_k \| \vv^k -\nabla f(\x^k)\|^2\leq 2 \| \vv^k  - \nabla \hf(\x^k)\|^2 + 2\| \nabla f(\x^k)  - \nabla \hf(\x^k)\|^2 \overset{\eqref{zeroff}}{\leq} \epsilon^2.
	\end{eqnarray}

	By using Lemma \ref{444}, \eqref{end1}, and \eqref{end2}, we have
	\begin{eqnarray}\label{zero12}
		\frac{1}{K}\sum_{k=0}^{K-1}\E \|\vv^k \| \leq \Delta \cdot  \frac{4Ln_0}{\epsilon}\frac{1}{K} +3\epsilon\leq 4\epsilon.
	\end{eqnarray}
	One the other hand,  by Jensen's inequality, we have
 $$(\E \|\vv^{k} -\nabla \hf(\x^k)\|)^2 =  \E \|\vv^{k} -\nabla \hf(\x^k)  \|^2  - \E \|\vv^{k} -\nabla \hf(\x^k) - \E (\vv^{k} -\nabla \hf(\x^k))  \|^2\leq \epsilon^2. $$
	So 
	\begin{eqnarray}\label{zero11}
		&&\E\|  \nabla f(\x^k) \| \notag\\
		& =& \E \| \vv^{k}  - (\vv^{k} -\nabla \hf(\x^k)  ) +  \nabla \hf(\x^k) -\nabla f(\x^k)   \|\notag\\
		&\leq&  \E \| \vv^{k}  \| + \E \| \vv^{k} -\nabla \hf(\x^k)  \|+ \E \| \nabla \hf(\x^k) -\nabla f(\x^k) \|\notag\\
		&\overset{\eqref{zeroff}}{\leq}& \E \| \vv^k\| +\epsilon + \frac{\epsilon}{2\sqrt{6}}\leq  \E \| \vv^k\| +2\epsilon.
	\end{eqnarray}
	
	We have
	\begin{eqnarray}
		\E \|\nabla f(\tilde{\x}) \|  =  \frac{1}{K}\sum_{k=0}^{K-1}  \E\|  \nabla f(\x^k) \| \overset{\eqref{zero11}}{\leq} \frac{1}{K}\sum_{k=0}^{K-1}\E \|\vv^k \|  + 2\epsilon \overset{\eqref{zero12}}{\leq} 6\epsilon.
	\end{eqnarray}
\end{proof}

\subsection{Proof of Theorem \ref{theo:lowerbdd} for Lower Bound}
Our proof is a direct extension of \citet{carmon2017lower}.
Before we drill into the proof of Theorem \ref{theo:lowerbdd}, we  first introduce the hard instance $\tf_K$ \blue{with $K\geq 1$} constructed by \citet{carmon2017lower}.
\blue{
	\begin{eqnarray}
	\hf_K(\x) \coloneqq-\Psi(1)\Phi(x_1) +\sum_{i=2}^K \left[\Psi(-x_{i-1})\Phi(-x_i) - \Psi(x_{i-1})\Phi(x_i) \right],
	\end{eqnarray}}
where the component functions are 
\begin{eqnarray}
\Psi(x)\coloneqq\left\{
\begin{array}{ll}
0&x\leq \frac{1}{2}\\
\exp\left(1- \frac{1}{(2x-1)^2}\right)&x> \frac{1}{2}
\end{array}
\right.
\end{eqnarray}
and 
\begin{eqnarray}
\Phi(x)\coloneqq \sqrt{e} \int_{-\infty}^{x} e^{-\frac{t^2}{2}}, 
\end{eqnarray}
where $x_i$ denote the value of $i$-th coordinate of $\x$, with $i\in  [d]$.
$\hf_K(\x)$ constructed by \citet{carmon2017lower} is a zero-chain function, that is for every $i\in [d]$, $\nabla_i f(\x) =0$ whenever $x_{i-1} = x_i =x_{i+1}$.  So any deterministic algorithm can only recover ``one'' dimension in each iteration \citep{carmon2017lower}.  In addition, it satisfies that : If $|x_i|\leq 1$ for any $i \leq K$, 
\begin{eqnarray}
\left\| \nabla \hf_K(\x)\right\| \geq 1.
\end{eqnarray}

Then to handle random algorithms, \citet{carmon2017lower} further consider the following extensions:
\begin{eqnarray}
\tf_{K,\B^K}(\x)  = \hf_K\left((\B^K)^\bT \rho(\x)\right)+\frac{1}{10}\|\x\|^2 =\hf_K\left( \<\b^{(1)}, \rho(\x)  \>, \ldots, \<\b^{(K)}, \rho(\x) \> \right)+\frac{1}{10}\|\x\|^2,
\end{eqnarray}
where $\rho(\x) = \frac{\x}{\sqrt{1+ \|\x\|^2/R^2}}$ and $R = 230 \sqrt{K}$, $\B^K$ is chosen uniformly at random from the space of orthogonal matrices $\cO(d,K) =\{\D \in \RR^{d\times K}\vert \D^\top \D = I_K \ \}$.
The function $\tf_{K,\B}(\x)$ satisfies the following\blue{:}
\begin{enumerate}[(i)]
	\item 
	\begin{eqnarray}\label{bound}
	\tf_{K,\B^K}(\mathbf{0}) - \inf_{\x} \tf_{K,\B^K}(\x) \leq 12 K.
	\end{eqnarray}
	\item $\tf_{K,\B^K}(\x)$ has  constant $l$ (independent of $K$ and $d$) Lipschitz continuous gradient.
	\item if $d\geq 52 \cdot 230^2 K^2 \log (\frac{2K^2}{p})$, for any algorithm $\cA$ \blue{solving} \eqref{opt_eq}  with $n=1$, and $f(\x) =\tf_{K,\B^K}(\x)$, then with probability $1-p$, 
	\begin{eqnarray}\label{bound1}
	\left\|\nabla \tf_{K,\B^K}(\x^k) \right\| \geq \frac{1}{2}, \quad  \text{for every } k\leq K.
	\end{eqnarray}
\end{enumerate}
The above properties found by \citet{carmon2017lower} is very technical.  One can refer to \citet{carmon2017lower} for more details.

\begin{proof}[Proof of Theorem \ref{theo:lowerbdd}]
	Our lower bound theorem proof is as follows. The proof mirrors Theorem 2 in  \citet{carmon2017lower} by further taking the number of individual function $n$ into account.
	Set 
	\begin{eqnarray}
	f_i(\x) \coloneqq  \frac{ln^{1/2}\epsilon^2}{L}\tf_{K,\B_i^K}(\C_i^\bT\x/b)= \frac{ln^{1/2}\epsilon^2}{L} \left(\hf_{K}\left( (\B_i^K)^\bT\rho(\C^\bT_i\x/b)\right) +\frac{1}{10}\left\|\C^\bT_i \x/b\right\|^2\right), 
	\end{eqnarray}
	and 
	\begin{eqnarray}
	f(\x) = \frac{1}{n}\sum_{i=1}^n f_i(\x).
	\end{eqnarray}
	where $\B^{nK} =  [\B^K_1, \ldots,\B^K_n]$  is chosen uniformly at random from the space of orthogonal matrices $\cO(d,K) =\{\D \in \RR^{(d/n)\times (nK)}\vert \D^\top \D = I_{(nK)} \ \}$, with each $\B^K_i \in \{\D \in \RR^{(d/n)\times (K)}\vert \D^\top \D = I_{(K)} \ \}$, $i\in [n]$,  $\C =  [\C_1, \ldots,\C_n]$   is an arbitrary  orthogonal matrices $\cO(d,K) =\{\D \in \RR^{d\times d}\vert \D^\top \D = I_{d} \ \}$, with each $\C^K_i \in \{\D \in \RR^{(d)\times (d/n)}\vert \D^\top \D = I_{(d/n)} \ \}$, $i\in [n]$.  $K =  \frac{\Delta L}{12ln^{1/2}\epsilon^2}$,  with \blue{$n\leq \frac{144 \Delta^2 L^2}{l^2 \epsilon^4}$ (to ensure $K\geq 1$)}, $b = \frac{l\epsilon}{L}$, and $R = \sqrt{230K}$. We first verify that $f(\x)$ satisfies Assumption \ref{assu:main} (\ref{a1}). 
	For  Assumption \ref{assu:main} (\ref{a1}),  from \eqref{bound}, we have 
	$$ f(\mathbf{0}) -\inf_{\x\in \RR^d} f(\x)\leq \frac{1}{n}\sum_{i=1}^n ( f_i(\mathbf{0}) -\inf_{\x\in \RR^d} f_i(\x)) \leq \frac{ln^{1/2}\epsilon^2}{L} 12K=  \frac{ln^{1/2}\epsilon^2}{L}\frac{12\Delta L}{12ln^{1/2}\epsilon^2} =\Delta\footnote{\text{If $\x^0 \neq \mathbf{0}$, we can simply translate the counter example as $f'(\x) = f(\x - \x^0)$, then $ f'(\x^0) -\inf_{\x\in \RR^d} f'(\x)\leq \Delta$.  }}.$$
	For  Assumption \ref{assu:main}(\ref{a2}),  for any \blue{$i$}, using the $\tf_{K,\B_i^K}$ has $l$-Lipschitz continuous gradient, we have
	\begin{eqnarray}
	\left\|   \nabla \tf_{K,\B_i^K}(\C_i^\bT\x/b)  - \nabla \tf_{K,\B_i^K}(\C_i^\bT\y/b)    \right\|^2 \leq  l^2\left\|\C_i^\bT (\x -\y)/b \right\|^2,
	\end{eqnarray}
	Because $\| \nabla f_i(\x)  -\nabla f_i(\y)\|^2 = \left\|\frac{ln^{1/2}\epsilon^2}{Lb} \C_i\left(  \nabla \tf_{K,\B^K_i}(\C_i^\bT\x/b)  - \nabla \tf_{K,\B^K_i}(\C_i^\bT\y/b)\right)\right\|^2$, and using $\C_i^\top\C_i = I_{d/n}$, we have
	\begin{eqnarray}
	\left\| \nabla f_i(\x)  -\nabla f_i(\y) \right\|^2 \leq \left(\frac{ln^{1/2}\epsilon^2}{L}\right)^2 \frac{l^2}{b^4} \left\| \C_i^\bT (\x -\y)  \right\|^2 = n L^2 \left\|\C_i^\bT (\x -\y) \right\|^2,
	\end{eqnarray}
	where we use $b = \frac{l\epsilon}{L}$.
	Summing $i =1, \ldots, n$ and using each $\C_i$ are orthogonal matrix, we have
	\begin{eqnarray}
	\E \| \nabla f_i(\x)  -\nabla f_i(\y) \|^2 \leq L^2\| \x -\y\|^2.
	\end{eqnarray}
	Then with 
	$$d\geq  2\max(9n^3K^2,12n^2KR^2)\log \left(\frac{2n^3K^2}{p}\right) + n^2K  \sim \cO\left(\frac{n^2 \Delta^2 L^2}{\epsilon^{4}}\log\left(\frac{n^2 \Delta^2 L^2}{\epsilon^{4}p}\right)\right),$$
	from Lemma 2 of \citet{carmon2017lower} (or similarly Lemma 7 of \citet{woodworth2016tight} and Theorem 3 of  \citet{woodworth2017lower},  also refer to Lemma \ref{lowerbound}  in the  end of the paper),  with probability at least $1-p$, after $T = \frac{nK}{2}$ iterations (at the end of iteration $T-1$), for all $I^{T-1}_i$ with $i\in [d]$,   if $I^{T-1}_i< K$,  then for any $j_i \in \{I^{T-1}_i+1, \ldots, K \}$,  we have $ \<\b_{i,j_i},\rho(\C_i^\bT\x/b)   \> \leq \frac{1}{2}$,  where $I^{T-1}_i$  denotes that the algorithm $\cA$ has called individual function $i$ with $I^{T-1}_i$ times ($\sum_{i=1}^n I^{T-1}_i = T$) at the end of iteration $T-1$,  and $\b_{i,j}$ denotes the $j$-th column of $\B^K_{i}$.  However, from \eqref{bound1}, if $ \<\b_{i,j_i},\rho(\C_i^\bT\x/b)   \> \leq \frac{1}{2}$, we will have   $\|\nabla \tf_{K,\B_i^K}(\C^\bT_i\x/b)\|\geq\frac{1}{2} $.  So $f_i$ can be solved only after $K$ times calling it. 
	
	From the above analysis, for any algorithm $\cA$, after running $	T=\frac{nK}{2} = \frac{\Delta Ln^{1/2}}{24l\epsilon^2}$ iterations, at least $\frac{n}{2}$ functions cannot be solved (the worst case is when  $\cA$ exactly solves $\frac{n}{2}$ functions), so 
	\begin{eqnarray}
	&&\left\|\nabla f(\x^{nK/2}) \right\|^2 =  \frac{1}{n^2} \left\|\sum_{i  \text{ not solved}} \frac{ln^{1/2}\epsilon^2}{Lb} \C_i  \nabla \tf_{K,\B_i^K}(\C_i^\bT\x^{nK/2}/b) \right\|^2\notag\\
	&& \overset{a}{=} \frac{1}{n^2} \sum_{i \text{ not solved}}\left\| n^{1/2}  \epsilon \nabla \tf_{K,\B_i^K}(\C_i^\bT\x^{nK/2}/b) \right\|^2\overset{\eqref{bound1}}{\geq} \frac{\epsilon^2}{8},
	\end{eqnarray}
	where in $\overset{a}=$, we use $\C_i^\top\C_j=\mathbf{0}_{d/n}$, when $i\neq j$, and $\C_i^\top\C_i= I_{d/n}$.	
\end{proof}

\begin{lemma}\label{lowerbound}
Let $\{\x\}_{0:T}$ with $T = \frac{nK}{2} $ is informed by a certain algorithm in the form \eqref{algor-need}.  Then when $d\geq 2\max(9n^3K^2,12n^3KR^2)\log (\frac{2n^2K^2}{p}) + n^2K$,  with probability $1-p$, at each iteration $0\leq t\leq T$, $\x^t$ can only recover one  coordinate. 
\end{lemma}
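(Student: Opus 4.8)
The plan is to adapt the randomized rotated-zero-chain argument of \citet{carmon2017lower} (their Lemma~2; see also \citet{woodworth2016tight}) to the finite-sum construction used here, where the $n$ coordinate blocks live in the mutually orthogonal subspaces $\mathrm{range}(\C_i)$ and block $i$ carries an independently rotated copy $\tilde{f}_{K,\B_i^K}$ of the hard instance. By translating the instance we may take $\x^0=\mathbf{0}$. For a block $i$ put $\mathrm{prog}_i(\x)=\max\{\,j\le K:\ |\langle\b_{i,j},\rho(\C_i^\bT\x/b)\rangle|>1/2\,\}$ (and $0$ if no such $j$), and maintain a ``discovered set'' $G_t$ of pairs: $G_0=\emptyset$, and at iteration $t$, when $\cA$ queries block $i_t$, set $I_t=\max\{j:(i_t,j)\in G_t\}$ and $G_{t+1}=G_t\cup\{(i_t,I_t+1)\}$. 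The statement ``$\x^t$ recovers only one coordinate'' is exactly $|G_{t+1}\setminus G_t|\le1$, which holds by construction; the content of the lemma is that this $G_t$ accounts for \emph{all} the information $\cA$ can extract by iteration $t$.

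The first ingredient is the robust zero-chain property of $\hat{f}_K$ established in \citet{carmon2017lower}: $\partial_j\hat{f}_K(\mathbf{z})=0$ whenever $|z_{j-1}|\le1/2$ and $|z_j|\le1/2$ (using $\Psi\equiv\Psi'\equiv0$ on $(-\infty,1/2]$), so $\nabla\hat{f}_K(\mathbf{z})$ is supported on coordinates $1,\dots,j^\star+1$, where $j^\star$ is the largest index with $|z_{j^\star}|>1/2$. Composing through $\rho$ and $\C_i$ and differentiating the quadratic term $\tfrac{1}{10}\|\C_i^\bT\x/b\|^2$ (whose $\x$-gradient is a scalar multiple of $\C_i\C_i^\bT\x$), the chain rule yields, \emph{on the event} that $|\langle\b_{i,j},\rho(\C_i^\bT\x^t/b)\rangle|\le1/2$ for every $(i,j)\notin G_t$, the inclusion $\nabla f_{i_t}(\x^t)\in\mathrm{span}\{\C_{i_t}\b_{i_t,1},\dots,\C_{i_t}\b_{i_t,I_t+1}\}+\RR\,\C_{i_t}\C_{i_t}^\bT\x^t$, because there $\mathrm{prog}_{i_t}(\x^t)\le I_t$ and the Jacobian of $\rho$ only mixes in the direction $\C_{i_t}^\bT\x^t$. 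Since $\x^t$ is already available to $\cA$, the only genuinely new direction exposed at iteration $t$ is $\C_{i_t}\b_{i_t,I_t+1}$, which $G_{t+1}$ records; hence, on this good event, one shows by induction that $\x^t$ and the index choices $i_0,\dots,i_t$ are measurable with respect to $\sigma\big(\bxi,\{\b_{i,j}:(i,j)\in G_t\}\big)$.

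The probabilistic step, where the lower bound on $d$ enters, is the main obstacle. Conditioning on $\sigma\big(\bxi,\{\b_{i,j}:(i,j)\in G_t\}\big)$ and on the event that the good event held through iteration $t-1$, the undiscovered columns $\{\b_{i,j}:(i,j)\notin G_t\}$ remain jointly Haar-distributed on the Stiefel manifold of the orthogonal complement, within $\mathrm{range}(\C_i)$, of the span of the discovered columns --- a subspace of dimension at least $d/n-nK$ --- since conditioning a Haar Stiefel matrix on a subset of its columns leaves the rest Haar on the complement. As $\x^t$ is measurable for this conditioning, $\langle\b_{i,j},\rho(\C_i^\bT\x^t/b)\rangle=\|\rho(\C_i^\bT\x^t/b)\|\,\langle\b_{i,j},\hat{\mathbf{w}}\rangle$ with $\hat{\mathbf{w}}$ a fixed unit vector; using $\|\rho(\cdot)\|<R=\sqrt{230K}$ and the spherical tail bound $\PP(|\langle\mathbf{u},\hat{\mathbf{w}}\rangle|\ge\tau)\le2e^{-(d/n-nK)\tau^2/2}$ with $\tau=1/(2R)$ gives $|\langle\b_{i,j},\rho(\C_i^\bT\x^t/b)\rangle|\le1/2$ except with probability $2e^{-(d/n-nK)/(8R^2)}$ (this is where the factor $R^2=230K$ in the hypothesis on $d$ originates). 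A union bound over the $\cO(n^2K^2)$ triples $(i,j,t)$ with $t\le T=nK/2$ keeps the total failure probability below $p$ as soon as $d$ is as large as assumed; on the complementary event the deterministic induction of the previous paragraph runs through $t=T$, so every $\x^t$ recovers at most one coordinate. The two points needing care are (a) the filtration bookkeeping that makes ``$\x^t$ independent of the undiscovered columns'' precise, carried out jointly with the good-event induction to avoid circularity, and (b) the invariance of the Haar Stiefel law under conditioning on a subset of its columns; both are standard and may be cited from \citet{carmon2017lower,woodworth2016tight} or verified by a short rotation-invariance computation.
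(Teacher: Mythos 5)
Your high-level plan — track a ``discovered set'' of $\B$-columns, use the robust zero-chain property to argue one new column is exposed per query, condition on the revealed columns, apply a spherical tail bound, and union-bound — is exactly the framework of the paper's proof (which in turn follows \citet{carmon2017lower}). However, the central probabilistic step as you've written it has a gap, and that gap is precisely the place where the paper does the real work and where the large dimension requirement of the lemma comes from.

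\paragraph{Where the claim as written goes wrong.}
You assert that, conditioning on $\sigma(\bxi,\{\b_{i,j}:(i,j)\in G_t\})$ \emph{and on the event that the good event held through iteration $t-1$}, the undiscovered columns remain jointly Haar on the orthogonal complement, and you justify this via ``conditioning a Haar Stiefel matrix on a subset of its columns leaves the rest Haar.'' That standard fact is true for the columns alone, but the good event is itself a constraint on the \emph{undiscovered} columns (it asserts their inner products with the $\y_i^s$'s are small), so conditioning on it perturbs the distribution. The resulting conditional law is \emph{not} Haar on the complement. Stated baldly, your step (b) is not a short verification; it is the crux of the lemma, and it does not hold as stated.

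\paragraph{What the paper does instead.}
The paper proves a strictly weaker invariance: conditionally on $\bcG^{<t}$, $\hat{i}^t=\hat{i}^t_0$, $\bxi$, and the discovered columns, the distribution of the undiscovered columns is invariant under rotations $\Z_i$ that fix $\bcV_i^{t-1}$ — a span that includes \emph{both} the discovered $\b$'s \emph{and} the past query points $\y_i^0,\dots,\y_i^{t-1}$. This rotation invariance controls only the component of an undiscovered $\b_{i,j}$ orthogonal to $\bcV_i^{t-1}$, which is why the good event $\bcG^t$ is phrased in terms of $\langle\u,\cP_i^{(t-1)\bot}\y_i^t\rangle$ rather than $\langle\u,\y_i^t\rangle$. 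To pass from that to the desired $|\langle\u,\y_i^t\rangle|<1/2$, the paper must separately bound $\|\cP_i^{t-1}\u\|$, and it does so by the Gram--Schmidt recursion in the middle of the proof, showing $\|\cP_i^{t-1}\u\|\le b=\sqrt{3T}\,a$ and then combining $|\langle\u,\y_i^t\rangle|\le aR+bR<1/2$. It is exactly this accumulation argument that forces the tail threshold to be $a=\min\left(\tfrac{1}{3(T+1)},\tfrac{1}{2(1+\sqrt{3T})R}\right)\sim\min\left(\tfrac{1}{nK},\tfrac{1}{\sqrt{nK}R}\right)$, not your $\tau=1/(2R)$. Squaring this difference is what produces the lemma's hypothesis $d\gtrsim n^3\max(K^2,KR^2)\log(\cdot)$, which is larger than your $d\gtrsim nR^2\log(\cdot)$ by roughly $n^2K$.

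\paragraph{Can your direct route be salvaged?}
Your parenthetical point (a) — filtration bookkeeping ``to avoid circularity'' — is the right instinct, and a cleaner route does exist: define a \emph{simulated} trajectory $\tilde\x^t$ that applies $\cA^{t-1}$ to truncated gradients that by fiat zero out all partials $\partial_j\hat f_K$ for $j>\tilde I^t_{i_t}+1$ (and drop the $\Psi'$ pieces at $j=\tilde I^t_{i_t}+1$). The simulated trajectory and its discovery set $\tilde G_t$ are then exactly measurable with respect to $\bxi$ and the revealed columns without any good-event conditioning, so the unconditional Haar fact applies and the events $\tilde{\bcG}^t$ can be union-bounded directly; one checks separately that on the good event the simulated and actual trajectories coincide. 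But this simulation step is the entire technical content of the lemma, and it must be executed in full; it is not a citation. And if it does go through, it would in fact yield the lemma under a smaller $d$ than the paper states — which should make you pause and double-check every step, since the paper deliberately chose the rotation-invariance plus Gram--Schmidt route and stated the lemma with the larger dimension. As written, your proposal asserts the conclusion of that missing argument rather than proving it, so this is a genuine gap.
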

\begin{proof}
	The proof is essentially same to \cite{carmon2017lower} and \cite{woodworth2017lower}.   We give a proof here. Before the poof, we give the following definitions:
	  \begin{enumerate}
	  	\item Let $i^t$ denotes that at iteration $t$, the algorithm choses the $i^t$-th individual function.
	  	\item Let $I^t_i$ denotes the total  times that individual function with index $i$ has been called before iteration $k$. We have  $I^0_i=0$ with $i\in [n]$, $i\neq i^t$, and $I^0_{ i^0}=1$. And  for $t\geq 1$,
	  	\begin{eqnarray}
	   I^t_i=\left\{
	   \begin{aligned}
	  I^{t-1}_i +1, & & \quad i = i_t. \\
	  I^{t-1}_i,&  & \quad \text{otherwise}. \\
	   \end{aligned}
	   \right.
	  	\end{eqnarray} 
	   \item Let $\y^t_i =   \rho(\C_i^\bT \x^t) = \frac{\C_i^\bT \x^t}{\sqrt{R^2+ \|\C_i^\bT \x^t\|^2}}$ with $i \in [n]$. We have $\y^t_i\in \RR^{d/n}$ and $\|\y^t_i\|\leq R$.
	  \item Set $\bcV^t_i$ be the set that $ \left(\bigcup_{i=1}^n\left\{ \b_{i, 1}, \cdots  \b_{i, \min(K,I^t_i)}\right\}\right) \bigcup \left\{\y^0_i,\y^1_i, \cdots, \y^t_i\right\}$, where  $\b_{i,j}$ denotes the $j$-th column of $\B^K_{i}$. 
	  \item Set $\bcU^t_i$ be the set of  $ \left\{  \b_{i, \min(K,I^{t-1}_i+1)}, \cdots,  \b_{i, K }\right\}$  with $i\in [n]$. $\bcU^t = \bigcup_{i=1}^n \bcU^t_i$. And set $\tilde{\bcU}^t_i =\left\{\b_{i, \min(K,1)}, \cdots, \b_{i, \min(K,I^{t-1}_i)}\right\}$. $\tilde{\bcU}^t = \bigcup_{i=1}^n \tilde{\bcU}^t_i$. 
	  \item Let $\cP_i^t\in \mathcal{R}^{(d/n) \times (d/n)}$ denote the projection operator to the span of $\uu\in \bcV_i^t$. And let 
	  $\cP^{t\bot}_i$ denote its orthogonal complement.	 
\end{enumerate}
Because $\cA^t$ performs measurable mapping, the above terms are all measurable on  $\bxi$ and $\B^{nK}$, where $\bxi$ is the random vector in $\cA$.
It is clear that if for all $0\leq t\leq T$ and $i \in [n]$, we have
\begin{eqnarray}\label{lowerend}
\left| \< \u, \y^t_i    \> \right| < \frac{1}{2},  \quad  \text{for all~ } \u \in \bcU^t_i.     
\end{eqnarray}
then  at each iteration,  we can only recover one index, which is our destination. To prove that \eqref{lowerend} holds with probability at least $1-p$,  we consider a more hard event $\bcG^t$ as
\begin{eqnarray}\label{lowerbound G}
\bcG^t  = \left\{     \left|\< \u, \cP^{(t-1) \bot}_i \y^t_i    \>\right|\leq  a \|  \cP^{(t-1) \bot}_i \y_i^t\| \mid  \u \in \bcU^t ~(\text{not~} \bcU^t_i) , ~ i\in [n]\right\}, \quad t\geq 1,
\end{eqnarray}
with $a = \min \left( \frac{1}{3(T+1)}, \frac{1}{ 2(1+\sqrt{3T})R}   \right)$. 
And $G^{\leq t} = \bigcap_{j=0}^t \bcG^j$. 

We first show that if $\bcG^{\leq T}$ happens, then \eqref{lowerend} holds for all $0\leq t\leq T$.      For  $0\leq t\leq T$, and  $i \in [n]$, if $\bcU^t_i = \varnothing$, \eqref{lowerend} is right; otherwise for any $\u \in \bcU^t_i$, we have
\begin{eqnarray}\label{lowerbound uy}
&&\left| \< \u, \y^t_i    \> \right|\notag\\
&\leq&\left| \<\u, \cP^{(t-1)\bot}_i \y^t_i \>          \right| + \left| \<\u, \cP^{(t-1)}_i \y^t_i \>          \right|\notag\\
&\leq& a \| \cP_i^{(t-1)\bot} \y^t_i \|+  \left| \<\u, \cP^{t-1}_i \y^t_i \>          \right|\leq   a R+  R\left\| \cP^{t-1}_i \uu        \right\|,
\end{eqnarray}
where in the last inequality, we use $ \| \cP^{(t-1)\bot}_i \y^t_i \|\leq \|  \y^{(t-1)}_i \|\leq R$. 

If $t=0$, we have $\cP_i^{t-1} = \mathbf{0}_{d/n\times d/n}$, then $\left\| \cP_i^{t-1} \uu         \right\| = 0$, so \eqref{lowerend} holds. When $t\geq 1$, suppose at $t-1$,  $\bcG^{\leq t}$ happens then  \eqref{lowerend} holds for all $0$ to $t-1$. Then we need to prove that $\|\cP^{t-1}_i \uu  \|\leq b = \sqrt{3T}a$ with $\uu\in \bcU^t_i$ and $i\in [n]$. Instead,  we prove   a stronger results: $\|\cP^{t-1}_i \uu  \|\leq b = \sqrt{3T}a$ with all   $\uu\in \bcU^t$ and $i\in [n]$. Again,   When $t = 0$, we have $\|\cP^{t-1}_i \uu  \|= 0$, so it is right, when $t\geq 1$, 
by  Graham-Schmidt procedure on  $\y^0_{i},  \b_{i_0, \min(I^0_{i^0}, K)}, \cdots,  \y^{t-1}_{i}, \b_{i_{t-1}, \min(I^{t-1}_{i^{t-1}}, K)}$, we have
\begin{eqnarray}\label{lowerbound3}
\left\| \cP^{t-1}_i \uu        \right\|^2 = \sum_{z=0}^{t-1}\left| \<\frac{ \cP^{(z-1)\bot}_i \y^z_i}{\|\cP^{(z-1)\bot}_i \y^z_i \|}, \u     \>   \right|^2+ \sum_{z=0, ~I^{z}_{i^{z}}\leq K}^{t-1}\left| \<\frac{ \hat{\cP}^{(z-1)\bot}_i \b_{i_{z}, I^z_{i^z}}}{\|\hat{\cP}^{(z-1)\bot}_i \b_{i_{z}, I^z_{i^z}} \|}, \u     \>   \right|^2,
\end{eqnarray}
where 
 $$ \hat{\cP}^{(z-1)}_i  = \cP^{(z-1)}_i + \frac{\left(\cP^{(z-1)\bot}_i \y^z_i\right)\left(\cP^{(z-1)\bot}_i \y^z_i\right)^\bT}{\left\|\cP^{(z-1)\bot}_i \y^z_i\right\|^2}.     $$
Using $\b_{i_{z}, I^z_{i^z}} \bot \uu$ for all $\uu \in \bcU^t$, we have
 \begin{eqnarray}\label{lowerbound2}
&&\left|\<\hat{\cP}^{(z-1)\bot}_i \b_{i_{z}, I_{i^z}^z}, \u \>\right|\\
&=&\left|0-\<\hat{\cP}^{(z-1)}_i \b_{i_{z}, I_{i^z}^z}, \u \>\right|\notag\\
&\leq& \left|\<\cP^{(z-1)}_i \b_{i_{z}, I^z_{i^z}}, \u \>\right|+ \left|  \< \frac{\cP^{(z-1)\bot}_i   \y^z_i}{\|\cP^{(z-1)\bot}_i   \y^z_i \|}, \b_{i_{z}, I^z_{i^z}} \> \<  \frac{\cP^{(z-1)\bot}_i   \y^z_i}{\|\cP^{(z-1)\bot}_i   \y^z_i \|}, \uu\>      \right|.\notag
 \end{eqnarray}
 
For the first term in the right hand of \eqref{lowerbound2}, by induction, we have
\begin{eqnarray}
\left|\<\cP^{(z-1)}_i \b_{i_{z}, I^z_{i^z}}, \u \>\right| = \left|\<\cP^{(z-1)}_i \b_{i_{z}, I^z_{i^z}},\cP^{(z-1)}_i   \u \>\right|\leq b^2. 
\end{eqnarray} 
For the second  term  in the right hand of \eqref{lowerbound2}, by assumption \eqref{lowerbound G}, we have
\begin{eqnarray}
 \left|  \< \frac{\cP^{(z-1)\bot}_i   \y^z_i}{\|\cP^{(z-1)\bot}_i   \y^z_i \|},  \b_{i_{z}, I^z_{i^z}}\> \<  \frac{\cP^{(z-1)\bot}_i   \y^z_i}{\|\cP^{(z-1)\bot}_i   \y^z_i \|}, \uu\>      \right|\leq a^2.
\end{eqnarray}
 
Also,  we have
 \begin{eqnarray}\label{lowerbound4}
&&\left\|\hat{\cP}^{(z-1)\bot}_i \b_{i_{z}, I^z_{i^z}} \right\|^2\\
&=& \| \b_{i_{z}, I^z_{i^z}}\|^2 - \left\|\hat{\cP}^{(z-1)}_i\b_{i_{z}, I^z_{i^z}} \right\|^2\notag\\
&=&  \| \b_{i_{z}, I^z_{i^z}}\|^2 - \left\|\cP^{(z-1)}_i \b_{i_{z}, I^z_{i^z}} \right\|^2 -\left|  \< \frac{\cP^{(z-1)\bot}_i   \y^z_i}{\|\cP^{(z-1)\bot}_i   \y^z_i \|},  \b_{i_{z}, I^z_{i^z}}\>       \right|^2\notag\\
&\geq&1 - b^2 - a^2.\notag
\end{eqnarray}
 Substituting \eqref{lowerbound2}  and \eqref{lowerbound4} into \eqref{lowerbound3}, for all $\u\in \bcU^t$, we have
 \begin{eqnarray}
\left\| \cP^{t-1}_i \uu        \right\|^2 &\leq& t a^2 + t \frac{ (a^2+b^2)^2}{ 1- (a^2 +b^2)}\notag\\
&\overset{a^2 +b^2 \leq  (3T+1)a^2 \leq a}{\leq}& T a^2+ T \frac{a^2}{ 1-a}\overset{a\leq 1/2}\leq 3T a^2 = b^2.
 \end{eqnarray}
 
 Thus for \eqref{lowerbound uy},  $t\geq 1$, because $\u \in \bcU^t_i\subseteq \bcU^t $,  we have
 \begin{eqnarray}
\left| \< \u, \y^t_i    \> \right|\leq (a+b)R \overset{a\leq \frac{1}{ 2(1+\sqrt{3T})R}}{\leq}\leq \frac{1}{2}.
 \end{eqnarray}
 This shows that  if $\bcG^{\leq T}$ happens,  \eqref{lowerend} holds for all $0\leq t\leq T$.  Then we prove that $\PP(\bcG^{\leq T})\geq 1- p$. We have
 \begin{eqnarray}
\PP\left((\bcG^{\leq T})^c\right) &=& \sum_{t=0}^T \PP\left((\bcG^{\leq t})^c\mid \bcG^{< t} \right) .
  \end{eqnarray}
We give the following definition:
\begin{enumerate}
	\item  Denote $\hat{i}^t$ be the sequence of $i_{0:t-1}$.  Let $\hat{\cS}^t$ be the  set that  contains all possible ways of $\hat{i}^t$ ($|\hat{\cS}^t |\leq n^t $).
	\item Let $\tilde{\UU}^j_{\hat{i}^t} = [\b_{j, 1}, \cdots,  \b_{j, \min(K,I^{t-1}_j)}]$ with $j\in [n]$, and $\tilde{\UU}_{\hat{i}^t} =[\tilde{\UU}^1_{\hat{i}^t},\cdots,\tilde{\UU}^n_{\hat{i}^t}]$. $\tilde{\UU}_{\hat{i}^t}$ is  analogous to $\tilde{\bcU^t}$, but is a matrix.
	\item Let $\UU^j_{\hat{i}^t} = [ \b_{j, \min(K,I^t_j)};\cdots;\b_{j, K}] $ with $j\in [n]$, and $\UU_{\hat{i}^t} =[\UU^1_{\hat{i}^t},\cdots,\UU^n_{\hat{i}^t}]$. $\UU_{\hat{i}^t}$ is  analogous to $\bcU^t$, but is a matrix.  Let $\bar{\UU} = [\tilde{\UU}_{\hat{i}^t}, \UU_{\hat{i}^t}  ] $.
	\end{enumerate}
We have that
 \begin{eqnarray}
 &&\PP\left((\bcG^{\leq t})^c\mid \bcG^{< t} \right)\\
&=&\sum_{\hat{i}^t_0\in  \hat{\cS}^t} \E_{\bxi, \UU_{\hat{i}^t_0}} \left(\PP\left((\bcG^{\leq t})^c\mid \bcG^{< t}, \hat{i}^t=\hat{i}^t_0, \bxi,   \UU_{\hat{i}^t_0} \right)\PP\left(\hat{i}^t = \hat{i}^t_0 \mid  \bcG^{< t},   \bxi,   \UU_{\hat{i}^t_0} \right)\right).\notag
 \end{eqnarray}
For  $\sum_{\hat{i}^t_0\in  \hat{\cS}^t} \E_{\bxi, \UU_{\hat{i}^t_0}}\PP\left(\hat{i}^t = \hat{i}^t_0 \mid  \bcG^{< t},   \bxi,   \UU_{\hat{i}^t_0} \right) =  \sum_{\hat{i}^t_0\in  \hat{\cS}^t} \PP\left(\hat{i}^t = \hat{i}^t_0\mid  \bcG^{< t} \right) = 1$,  
 in the rest, we show that the probability $\PP\left((\bcG^{\leq t})^c\mid \bcG^{< t}, \hat{i}^t= \hat{i}^t_0, \bxi = \bxi_0,  \tilde{\UU}_{\hat{i}^t_0} = \tilde{\UU}_0, \right)$ for all  $\xi_0 ,\tilde{\UU}_0$ is small. By union bound, we have
\begin{eqnarray}
&&\PP\left((\bcG^{\leq t})^c\mid \bcG^{< t}, \hat{i}^t= \hat{i}^t_0, \bxi = \bxi_0,   \tilde{\UU}_{\hat{i}^t_0} = \tilde{\UU}_0 \right)\\
&\leq& \sum_{i=1}^n\sum_{\uu\in \bcU^t}\PP \left(\<\uu, \cP_i^{(t-1)\bot}\y^t_i \>\geq a \| \cP_i^{(t-1)\bot}\y^t_i\|\mid \bcG^{< t}, \hat{i}^t= \hat{i}^t_0, \bxi = \bxi_0,   \tilde{\UU}_{\hat{i}^t_0} = \tilde{\UU}_0\right) \notag. 
\end{eqnarray}
Note that $\hat{i}^t_0$ is a constant. Because given $\bxi$ and $\tilde{\UU}_{\hat{i}^t_0}$, under $G^{\leq t}$,  both  $\cP_i^{(t-1)}$ and $\y^t_i$ are known.  We prove 
 	\begin{eqnarray}
\!\!\!\! \!\!\!\! \!\!\!\! 	\PP\left( \UU_{\hat{i}^t_0} =  \UU_0\mid \bcG^{< t}, \hat{i}^t= \hat{i}^t_0, \bxi = \bxi_0,   \tilde{\UU}_{\hat{i}^t_0} = \tilde{\UU}_0\right) =  	\PP\left(\UU_{\hat{i}^t_0} =  \Z_i\UU_0\mid \bcG^{< t}, \hat{i}^t=\hat{i}^t_0, \bxi = \bxi_0,   \tilde{\UU}_{\hat{i}^t_0} = \tilde{\UU}_0\right),
 	\end{eqnarray}
 	where $\Z_i\in \RR^{d/n\times d/n}$, $\Z_i^\bT\Z_i = \I_d$, and $\Z_i \u  = \u = \Z^\bT_i\u$ for all $\u\in \bcV^{t-1}_i $. In  this way,   $\frac{\cP_i^{(t-1)\bot}\u}{\|\cP_i^{(t-1)\bot}\u\|}$  has uniformed distribution on the unit space.  To prove it, we have
 \begin{eqnarray}
 	&&\PP\left( \UU_{\hat{i}^t_0} = \UU_0 \mid \bcG^{< t}, \hat{i}^t=\hat{i}^t_0, \bxi = \bxi_0,   \tilde{\UU}_{\hat{i}^t_0} = \tilde{\UU}_0\right)\notag\\
 	 &=&  \frac{\PP(\UU_{\hat{i}^t_0} = \UU_0, \bcG^{< t}, \hat{i}^t=\hat{i}^t_0, \bxi = \bxi_0,   \tilde{\UU}_{\hat{i}^t_0} = \tilde{\UU}_0)}{\PP( \bcG^{< t}, \hat{i}^t=\hat{i}^t_0,  \bxi = \bxi_0,   \tilde{\UU}_{\hat{i}^t_0} = \tilde{\UU}_0)}\notag\\
 	 &=& \frac{\PP( \bcG^{< t}, \hat{i}^t=\hat{i}^t_0 \mid \bxi = \bxi_0, \UU_{\hat{i}^t_0} =\UU_0,  \tilde{\UU}_{\hat{i}^t_0} = \tilde{\UU}_0) p(\bxi = \bxi_0, \UU_{\hat{i}^t_0} =\UU_0,  \tilde{\UU}_{\hat{i}^t_0} = \tilde{\UU}_0)}{\PP( \bcG^{< t}, \hat{i}^t=\hat{i}^t_0, \bxi = \bxi_0,   \tilde{\UU}_{\hat{i}^t_0} = \tilde{\UU})},
 \end{eqnarray}
 And 
  \begin{eqnarray}
 	&&\PP\left(\UU_{\hat{i}^t_0} = \Z_i \UU_0 \mid \bcG^{< t}, \hat{i}^t=\hat{i}^t_0, \bxi = \bxi_0,   \tilde{\UU}_{\hat{i}_0} = \tilde{\UU}_0\right)\notag\\
 	&=& \frac{\PP( \bcG^{< t}, \hat{i}^t=\hat{i}^t_0 \mid \bxi = \bxi_0, \UU_{\hat{i}^t_0} =\UU_0,  \tilde{\UU}_{\hat{i}^t_0} = \Z_i \tilde{\UU}_0) p(\bxi = \bxi_0, \UU_{\hat{i}^t_0} =\Z_i \UU_0,  \tilde{\UU}_{\hat{i}^t_0} = \tilde{\UU}_0)}{\PP( \bcG^{< t}, \hat{i}^t=\hat{i}^t_0, \bxi = \bxi_0,   \tilde{\UU}_{\hat{i}^t_0} = \tilde{\UU}_0)}
  \end{eqnarray}
For $\bxi$ and $\bar{\UU}$ are independent. And $p(\bar{\UU}) = p(\Z_i \bar{\UU})$, we have $p(\bxi = \bxi_0, \UU_{\hat{i}^t_0} =\UU_0,  \tilde{\UU}_{\hat{i}^t_0} = \tilde{\UU}_0) =p(\bxi = \bxi_0, \UU_{\hat{i}^t_0} =\Z_i \UU_0,  \tilde{\UU}_{\hat{i}^t_0} = \tilde{\UU}_0) $. Then
  we  prove that if $ \bcG^{< t}$ and $\hat{i}^t=\hat{i}^t_0$ happens under  $\UU_{\hat{i}^t_0} =\UU_0, \bxi = \bxi_0,   \tilde{\UU}_{\hat{i}^t_0} = \tilde{\UU}_0$, if and only if $ \bcG^{< t}$ and $\hat{i}^t=\hat{i}^t_0$ happen under  $\UU_{\hat{i}^t_0} =\Z_i \UU_0, \bxi = \bxi_0,   \tilde{\UU}_{\hat{i}^t_0} = \tilde{\UU}_0$.
  
  Suppose at iteration $l-1$ with $l\leq t$, we have the result. At iteration $l$, suppose  $ \bcG^{<l}$ and $\hat{i}^l = \hat{i}^l_0$ happen, given    $\UU_{\hat{i}^t_0} =\UU_0, \bxi = \bxi_0,   \tilde{\UU}_{\hat{i}^t_0} = \tilde{\UU}_0$. Let $\x'$ and $(\hat{i}')^j$ are generated by  $\bxi = \bxi_0,  \UU_{\hat{i}^t_0} =\Z_i \UU_0, \tilde{\UU}_{\hat{i}^t_0} = \tilde{\UU}_0$. Because $\bcG^{<l}$ happens, thus at each iteration, we can only recover one index until $l-1$.  Then $(\x')^j=\x^j$ and $(\hat{i}')^j = \hat{i}^j$. with $j\leq l$.  By induction, we only need to prove that $\bcG^{l-1'}$ will happen.   Let $\uu \in \bcU^{l-1}$, and $i\in [n]$, we have
 \begin{eqnarray}
  \left|\< \Z_i\u, \frac{\cP^{(l-2) \bot}_i \y^{l-1}_i    }{\|  \cP^{(l-2) \bot}_i\y^{l-1}_i\|}\>\right|=  \left|\< \u, \Z_i^\bT\frac{\cP^{(l-2) \bot}_i \y^{l-1}_i    }{\|  \cP^{(l-2) \bot}_i \y_i^{l-1}\|}\>\right|\overset{a}{=} \left|\< \u, \frac{\cP^{(l-2) \bot}_i \y^{l-1}_i    }{\|  \cP^{(l-2) \bot}_i \y^{l-1}_i\|}\>\right|,
 \end{eqnarray}
where in $\overset{a}=$, we use $\cP^{(l-2) \bot}_i \y^{l-1}_i$ is in the span of $\bcV^l_i\subseteq \bcV^{t-1}_i $.  This shows that  if $ \bcG^{< t}$ and $\hat{i}^t=\hat{i}^t_0$ happen under  $\UU_{\hat{i}^t_0} =\UU_0, \bxi = \bxi_0,   \tilde{\UU}_{\hat{i}^t_0} = \tilde{\UU}_0$, then $\bcG^{< t}$ and $\hat{i}^t=\hat{i}^t$ happen under  $\UU_{\hat{i}^t_0} =\Z_i \UU_0, \bxi = \bxi_0,   \tilde{\UU}_{\hat{i}^t_0} = \tilde{\UU}_0$. In the same way, we can prove the necessity.
Thus for any $\uu\in \UU^t$, if $\| \cP_i^{(t-1)\bot}\y^t_i\|\neq 0$ (otherwise, $ \left|\< \u, \cP^{(t-1) \bot}_i \y^t_i    \>\right|\leq  a \|  \cP^{(t-1) \bot}_i \y_i^t\|$ holds), we have
\begin{eqnarray}
&&\PP \left(\<\uu, \frac{\cP_i^{(t-1)\bot}\y^t_i }{ \| \cP_i^{(t-1)\bot}\y^t_i\|}\>\geq a \mid \bcG^{< t}, \hat{i}^t= \hat{i}^t_0, \bxi = \bxi_0,   \tilde{\UU}_{\hat{i}^t_0} = \tilde{\UU}_0\right)\notag\\
&\overset{a}\leq&\PP \left(\<\frac{\cP_i^{(t-1)\bot}\uu }{ \| \cP_i^{(t-1)\bot}\uu\|}, \frac{\cP_i^{(t-1)\bot}\y^t_i }{ \| \cP_i^{(t-1)\bot}\y^t_i\|}\>\geq a \mid \bcG^{< t}, \hat{i}^t= \hat{i}^t_0, \bxi = \bxi_0,   \tilde{\UU}_{\hat{i}^t_0} = \tilde{\UU}_0\right)\notag\\
&\overset{b}{\leq}& 2e^{\frac{-a^2 (d/n -2T)}{2}},
\end{eqnarray}
where in $\overset{a}{\leq}$, we use $\| \cP_i^{(t-1)\bot}\uu\|\leq 1$; and in $\overset{b}\leq$, we use $\frac{\cP_i^{(t-1)\bot}\y^t_i }{ \| \cP_i^{(t-1)\bot}\y^t_i\|}$ is a known unit vector and $\frac{\cP_i^{(t-1)\bot}\u}{\|\cP_i^{(t-1)\bot}\u\|}$  has uniformed distribution on the unit space. Then by union bound, we have  $\PP\left(\left(\bcG^{\leq t}\right)^c\mid \bcG^{< t} \right) \leq 2(n^2K)e^{\frac{-a^2 (d/n -2T)}{2}} $.
  Thus
\begin{eqnarray}
\PP\left( \left(\bcG^{\leq T}\right)^c\right) &\leq&  2(T+1)n^2K\exp\left(\frac{-a^2 (d/n -2T)}{2}\right)\notag\\
& \overset{T = \frac{nK}{2}}{\leq} & 2(nK)(n^2K)\exp\left(\frac{-a^2 (d/n -2T)}{2}\right). 
\end{eqnarray}
Then by setting 
\begin{eqnarray}
d/n &\geq& 2\max(9n^2K^2,12nKR^2)\log (\frac{2n^3K^2}{p}) + nK \notag\\
 &\geq& 2\max(9 (T+1)^2,2 (2\sqrt{3T})^2R^2)\log (\frac{2n^3K^2}{p}) + 2T\notag\\
 &\geq& 2\max(9 (T+1)^2,2 (1+\sqrt{3T})^2R^2)\log (\frac{2n^3K^2}{p}) + 2T\notag\\
 &\geq& \frac{2}{a^2}\log (\frac{2n^3K^2}{p}) + 2T,
\end{eqnarray}
we have $\PP\left(\left(\bcG^{\leq 	T}\right)^c\right)\leq p$. This ends proof.

\end{proof}

\end{document}